\documentclass[a4paper,11pt]{amsart}
\usepackage{amsmath,amsthm,amssymb}
\usepackage[mathscr]{eucal}
\usepackage{cite}
\usepackage{upgreek}
\usepackage[bookmarks=false]{hyperref}
\usepackage{enumerate}
\usepackage{mathrsfs}
\usepackage{blindtext}
\usepackage{scrextend}
\usepackage{enumitem}
\addtokomafont{labelinglabel}{\sffamily}
\usepackage{color}
\usepackage[at]{easylist}
\usepackage{bbm}

\setlength{\oddsidemargin}{0pt}
\setlength{\evensidemargin}{0pt}
\setlength{\topmargin}{-5pt}
\setlength{\textheight}{640pt}
\setlength{\textwidth}{470pt}
\setlength{\headsep}{40pt}
\setlength{\parindent}{0pt}
\setlength{\parskip}{1ex plus 0.5ex minus 0.2ex}

\numberwithin{equation}{section}

\theoremstyle{plain}
\newtheorem{main}{Theorem}
\newtheorem{mcor}[main]{Corollary}
\newtheorem{mprop}[main]{Proposition}

\newtheorem{theorem}{Theorem}[section]

\newtheorem{lemma}[theorem]{Lemma}

\newtheorem{corollary}[theorem]{Corollary}
\theoremstyle{definition}
\newtheorem{definition}[theorem]{Definition}
\newtheorem*{definition*}{Definition}
\newtheorem{example}[theorem]{Example}
\newtheorem{examples}[theorem]{Examples}

\newtheorem{remark}[theorem]{Remark}

\newcommand{\N}{\mathbb{N}}
\newcommand{\R}{\mathbb{R}}
\newcommand{\C}{\mathbb{C}}

\newcommand{\bM}{\mathbb{M}}

\newcommand{\cF}{\mathcal{F}}

\newcommand{\cH}{\mathcal{H}}

\newcommand{\cM}{\mathcal{M}}
\newcommand{\cN}{\mathcal{N}}
\newcommand{\cO}{\mathcal{O}}

\newcommand{\cS}{\mathcal{S}}

\newcommand{\cU}{\mathcal{U}}

\newcommand{\bF}{\mathbb{F}}

\newcommand{\otb}{\bar{\otimes}}
\newcommand{\ot}{\otimes}

\newcommand{\emb}{\prec}

\newcommand{\Ad}{\operatorname{Ad}}

\newcommand{\id}{\operatorname{id}}

\newcommand{\actson}{\curvearrowright}
\newcommand{\eps}{\varepsilon}
\newcommand{\norm}[1]{\left\|#1\right\|}

\newcommand{\cross}{\rtimes}
\renewcommand{\qed}{\hfill$\blacksquare$}

\begin{document}

\title[II$_1$ factors with exotic central sequence algebras]
{II$_1$ factors with exotic central sequence algebras}

\author[A. Ioana]{Adrian Ioana}
\address{Department of Mathematics, University of California San Diego, 9500 Gilman Drive, La Jolla, CA 92093, USA}
\email{aioana@ucsd.edu}

\author[P. Spaas]{Pieter Spaas}
\address{Department of Mathematics, University of California San Diego, 9500 Gilman Drive, La Jolla, CA 92093, USA}
\email{pspaas@ucsd.edu}

\thanks{The authors were supported in part by NSF Career Grant DMS \#1253402.}

\begin{abstract} 

We provide a class of separable II$_1$ factors $M$ whose central sequence algebra is not  the ``tail" algebra associated to any decreasing sequence of von Neumann subalgebras of $M$.
This settles a question of McDuff \cite{Mc69d}.

\end{abstract}

\maketitle

\section{Introduction and statement of main results}

A uniformly bounded sequence $(x_k)$ in a II$_1$ factor $M$ is called {\it central} if $\lim_k\|x_ky-yx_k\|_2=0$, for every $y\in M$. 
Central sequences have played a fundamental role in the study of II$_1$ factors since the very beginning of the subject with Murray and von Neumann's property Gamma \cite{MvN43}.
A separable II$_1$ factor $M$ has {\it property Gamma} if it admits a central sequence $(x_k)$ which is not trivial, in the sense that $\inf_k\|x_k-\tau(x_k)1\|_2>0$. 
Murray and von Neumann proved that the unique hyperfinite II$_1$ factor has property Gamma, while the free group factor $L(\mathbb F_2)$ does not, thus giving the first example of two non-isomorphic separable II$_1$ factors \cite{MvN43}. Over two decades later, in the late 60s,  the analysis of central sequences of \cite{MvN43} was refined to provide additional examples of non-isomorphic separable II$_1$ factors in \cite{Ch69,DL69,Sa68,ZM69},  culminating with McDuff's construction of a continuum of such factors \cite{Mc69a,Mc69b}.  

Shortly after, McDuff \cite{Mc69c} defined the {\it central sequence algebra} of a II$_1$ factor $M$  as the relative commutant, $M'\cap M^{\omega}$, of $M$ into its ultrapower $M^{\omega}$ (\!\!\!\cite{Wr54,Sa62}), where $\omega$ is a free ultrafilter on $\mathbb N$.
  This  has since allowed for a more structural approach to central sequences and led to significant progress in the study of II$_1$ factors.
Indeed, the central sequence algebra was a crucial tool in Connes' famous classification of amenable II$_1$ factors \cite{Co76}. 
Furthermore, the relative commutant $M'\cap\cM^\omega$, for some von Neumann algebra $\cM\supset M$, was used by Popa to formalise his influential spectral gap rigidity principle in \cite{Po06a,Po06b}.  Most recently, central sequence algebras and their subalgebras were used to provide a continuum of II$_1$ factors with non-isomorphic ultrapowers in \cite{BCI15} (adding to the four such factors noticed in \cite{FGL06,FHS11,GH16}).

However, despite the progress the use of central sequence algebras has allowed, their structure remains fairly poorly understood.  For instance, it is open whether any  II$_1$ factor $M$ whose central sequence algebra is abelian admits an abelian subalgebra $A$ such that $M'\cap M^\omega \subset A^\omega$ (see \cite{Ma17}). In this article, we investigate the existence of a certain ``canonical form" for central sequence algebras.
 To make this precise, we recall the following notions introduced by McDuff in \cite{Mc69d} in order to distil the key ideas 
  of \cite{Mc69b}:

\begin{definition}[\!\!{\cite[Definition~2]{Mc69d}}]\label{mdef}
	Let $M$ be a separable II$_1$ factor. A von Neumann subalgebra $A$ of $M$ is called {\it residual} if $\lim_k\|x_k-E_A(x_k)\|_2=0$, for every central sequence $(x_k)$ in $M$.
	A sequence $(A_n)_{n\in\mathbb N}$ of von Neumann subalgebras of $M$ is called a {\it residual sequence} if
	\begin{enumerate}
	\item $A_{n+1}\subset A_n$, for every $n$,
	\item $A_n$ is residual in $M$, for every $n$, and
	\item if $x_k\in A_k$ and $\|x_k\|\leq 1$, for every $k$, then the sequence $(x_k)$ is central in $M$.
	\end{enumerate}
\end{definition}

 \begin{remark} 
 A decreasing sequence $(A_n)_{n\in\mathbb N}$ of von Neumann subalgebras of $M$ is residual if and only if $M'\cap M^{\omega}=\cap_{n\in\mathbb N}A_n^{\omega}$. Thus, a separable II$_1$ factor $M$ admits a residual sequence if and only if its central sequence algebra is equal to the ``tail" algebra, $\cap_{n\in\mathbb N}A_n^{\omega}$, associated to a decreasing sequence of von Neumann subalgebras $(A_n)_{n\in\mathbb N}$. 
\end{remark}
In \cite{Mc69d}, McDuff noted that it was unknown whether every II$_1$ factor admits a residual sequence. 
She gave examples of II$_1$ factors which do not admit any {\it strongly} residual sequence $(A_n)_{n\in\mathbb N}$ (i.e. ones satisfying, in addition to (1)-(3), the existence of a subalgebra $A^n\subset A_n$ such that $A_n=A_{n+1}\bar{\otimes}A^n$), but left open the case of residual sequences. 
The main goal of this article is to provide the first examples of II$_1$ factors with no residual sequence. Before stating our results in this direction, let us note that several large, well-studied classes of II$_1$ factors admit a residual sequence.

\begin{examples}\label{ex}
The following II$_1$ factors admit a residual sequence:

\begin{enumerate}
\item Any II$_1$ factor without property Gamma. 
\item The hyperfinite II$_1$ factor $R$. If we write $R=\otb_{k\in\N} \bM_2(\C)$, and let $R_n=\otb_{k\geq n} \bM_2(\C)$, then $(R_n)_{n\in\N}$ is a residual sequence in $R$.
\item Any II$_1$ factor $M$ which is strongly McDuff, i.e. can be written as $M=N\bar{\otimes}R$, where $N$ is a II$_1$ factor without property Gamma. If  $A_n=1\otimes R_n$, then Connes'  characterization of property Gamma \cite[Theorem 2.1]{Co76} implies that $(A_n)_{n\in\mathbb N}$ is a residual sequence in $M$.
\item Any infinite tensor product $M=\bar{\otimes}_{k\in\mathbb N}M_k$ of II$_1$ factors without property Gamma. If $A_n=\bar{\otimes}_{k\geq n}M_k$, then \cite[Theorem 2.1]{Co76} implies that $(A_n)_{n\in\N}$ is a residual sequence in $M$. Note that $M$ is McDuff, i.e. $M\cong M\bar{\otimes}R$, but not strongly McDuff \cite[Theorem 4.1]{Po09a}.
\item The II$_1$ factors $L(T_0(\Gamma))$ and $L(T_1(\Gamma))$, where $\Gamma$ is any countable group and the countable groups $T_0(\Gamma), T_1(\Gamma)$ are defined as in \cite{DL69,Mc69b} (see also \cite[Section 1.1]{BCI15}). Then $T_0(\Gamma)$ and $T_1(\Gamma)$ both contain $\widetilde\Gamma:=\oplus_{i\in\N}\Gamma_i$, where each $\Gamma_i$ is a copy of $\Gamma$. If $A_n=L(\oplus_{i\geq n}\Gamma_i)$, then \cite[Corollary 2.11]{BCI15} shows that $(A_n)_{n\in\N}$ is a residual sequence in both $L(T_0(\Gamma))$ and $L(T_1(\Gamma))$.  In particular, the uncountably many II$_1$ factors which were shown to have non-isomorphic ultrapowers in \cite{BCI15} all admit residual sequences.
\item Any tensor product $M=\bar{\otimes}_{k=1}^NM_k$, where $N\in\N\cup\{\infty\}$, and for every $k$, $M_k$ is a II$_1$ factor admitting a residual sequence, $(A_{k,n})_{n\in\mathbb N}$. If $B_n=(\bar{\otimes}_{k=1}^{\min\{n,N\}}A_{k,n})\bar{\otimes}(\bar{\otimes}_{k=\min\{n,N\}+1}^NM_k)$, then \cite[Proposition 5.2]{Ma17} implies that $(B_n)_{n\in\mathbb N}$ is a residual sequence in $M$.

\end{enumerate}
\end{examples}

\begin{remark} 
 In \cite{Po09a,Po09b}, Popa studied the class of II$_1$ factors $M$ which arise as an inductive limit of subfactors $(M_n)$ with spectral gap and noticed that $M'\cap M^\omega = \cap_n (M_n'\cap M)^\omega$ (see \cite[Lemma 2.3]{Po09a}). Thus, every such II$_1$ factor $M$ admits a residual sequence, $(M_n'\cap M)_{n\in\N}$. Conversely, although it is unclear whether any II$_1$ factor admitting a residual sequence must be an inductive limit of subfactors with spectral gap, we note that this holds for the factors in Examples \ref{ex} (1)-(5). \end{remark}

We are now ready to state our first main result which gives examples of II$_1$ factors with no residual sequences, and thereby settles McDuff's question \cite{Mc69d}. 
\begin{main}\label{A}
Let $\Gamma$ be a countable non-amenable group. For every $k\in\mathbb N$, let $\pi_k:\Gamma\rightarrow\mathcal O(\mathcal H_k)$ be an orthogonal representation such that \begin{enumerate} \item $\pi_k^{\otimes l}$ is weakly contained in the left regular representation of $\Gamma$, for some $l=l(k)\in\mathbb N$, and
\item there is an orthonormal sequence $(\xi_k^m)_{m\in\mathbb N}\subset\mathcal H_k$ such that $\sup_{m\in\mathbb N}\|\pi_k(g)(\xi_k^m)-\xi_k^m\|\rightarrow 0$, as $k\rightarrow\infty$, for every $g\in\Gamma$. \end{enumerate}

Let $\Gamma\curvearrowright (B_k,\tau_k)$ be the Gaussian action associated to $\pi_k$, and $\Gamma\curvearrowright (B,\tau):=\bar{\otimes}_{k\in\mathbb N}(B_k,\tau_k)$ be the diagonal product action. Define  $M=B\rtimes\Gamma$.

Then the II$_1$ factor $M$ does not admit a residual sequence of von Neumann subalgebras.
\end{main}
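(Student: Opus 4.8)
The plan is to argue by contradiction: suppose $M = B \rtimes \Gamma$ admits a residual sequence $(A_n)_{n \in \mathbb N}$, and derive a contradiction from the structure of the Gaussian actions. The first step is to understand the central sequence algebra $M' \cap M^\omega$ concretely. Because each $\pi_k$ satisfies condition (1), the diagonal action $\Gamma \curvearrowright (B,\tau)$ should have a strong mixing/weak-containment property relative to the regular representation; combined with non-amenability of $\Gamma$, this forces any central sequence in $M$ to asymptotically live in $B^\omega$ (the crossed-product direction contributes nothing, by a Popa-style spectral gap argument for the action of $\Gamma$ on $L^2(M) \ominus L^2(B)$). So $M' \cap M^\omega = B^\omega \cap M' = (B^\Gamma)$-type algebra; more precisely, using condition (2) — the existence of almost-invariant orthonormal sequences $(\xi_k^m)_m$ with invariance improving as $k \to \infty$ — one sees that the Gaussian variables built from these vectors in the tail factors $\bar\otimes_{k \geq n} B_k$ produce genuine nontrivial central sequences, so $M$ has property Gamma and in fact is McDuff. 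The key structural input I would extract is a description of which subalgebras of $M$ can be \emph{residual}: a residual algebra $A$ must contain (asymptotically) all these Gaussian central sequences, and by condition (2) applied with $k$ large, $A$ must contain ``most'' of the tail algebras $\bar\otimes_{k \geq n}B_k$.

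The heart of the argument is then to show that no \emph{decreasing} sequence of such residual algebras can have intersection-of-ultrapowers equal to $M' \cap M^\omega$. The mechanism I expect: conditions (1) and (2) are in tension across the index $k$. Condition (1) says each $\pi_k$ (hence $B_k$'s contribution) is ``small'' — after tensoring it embeds in the regular representation, so no single $B_k$ alone, and no fixed finite tensor product, produces central sequences; you genuinely need the $k \to \infty$ limit in (2). So for a central sequence $(x_j)$ witnessing property Gamma, the ``mass'' of $x_j$ must escape to infinity in the $k$-direction as $j \to \infty$. Now if $(A_n)$ were residual, condition (3) of Definition~\ref{mdef} forces each $A_n$ to be small enough that bounded sequences drawn from $A_n$ are central — in particular $A_n$ should be (asymptotically) contained in a tail $\bar\otimes_{k \geq f(n)}B_k \rtimes (\text{small part of }\Gamma)$ with $f(n) \to \infty$ — while condition (2) (residuality) forces $A_n$ to be large enough to capture all central sequences. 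I would quantify both directions and show they cannot be simultaneously met: pick a central sequence whose $k$-support grows slower than any prescribed $f(n)$ but still witnesses nontriviality (possible precisely because (2) only gives a limit, with no rate), contradicting residuality of some $A_n$; or conversely, given the $A_n$, diagonalize to build a central sequence escaping every $A_n^\omega$, contradicting $M'\cap M^\omega = \cap_n A_n^\omega$.

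Concretely, the technical backbone would be: (a) a lemma computing $M' \cap M^\omega$ using the Gaussian functor — identifying it with a relative commutant inside $B^\omega$ governed by the asymptotic invariant vectors, via the standard dictionary between the representation $\bigoplus_k \pi_k$ and the Gaussian algebra $B = \bar\otimes_k B_k$; (b) a ``locality'' lemma saying that if $A \subset M$ is residual then for every $\varepsilon$ there is $n$ with $A$ contained $\varepsilon$-approximately in $\bar\otimes_{k \geq n} B_k$ together with a relatively amenable piece — here non-amenability of $\Gamma$ and condition (1) do the work of killing the group direction and the low-$k$ tensor factors; (c) the diagonalization producing the contradiction.

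The main obstacle, I expect, is step (b) — proving that residuality genuinely \emph{localizes} a subalgebra into a tail of the tensor product, rather than merely constraining it spectrally. A residual algebra is defined by a closure condition against \emph{all} central sequences, and a priori a residual algebra could be a ``diagonal'' or ``twisted'' subalgebra not of tail form; ruling this out requires exploiting condition (1) quantitatively (the exponent $l(k)$ giving weak containment in the regular representation) to show that any subalgebra not essentially supported in a high tail fails to absorb some explicitly constructed Gaussian central sequence. Controlling the interaction between the crossed product by the non-amenable $\Gamma$ and the tensor-product structure of $B$ — in particular ensuring the spectral gap estimates are uniform enough to run the diagonalization — is where the real fight will be, and is presumably why one needs \emph{both} hypotheses (1) and (2) rather than just McDuff-ness.
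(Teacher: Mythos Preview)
Your broad strategy---contradiction, spectral gap from hypothesis (1), central sequences from hypothesis (2)---is the right instinct, but two of your three steps have genuine gaps, and the third misidentifies the contradiction mechanism.

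First, your step (a) asserts that non-amenability of $\Gamma$ forces $M'\cap M^\omega\subset B^\omega$. This is false in general: that containment is Choda's theorem and requires $\Gamma$ to be non-\emph{inner}-amenable, which Theorem~\ref{A} does not assume. The paper's proof never computes $M'\cap M^\omega$ and never places it inside $B^\omega$; the $\rtimes\Gamma$ stays in the picture throughout.

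Second, your step (b)---showing a residual $A_n$ is approximately contained in a tail $\bar\otimes_{k\geq f(n)}B_k$---is not what is proved and is likely not true as stated. The paper works dually: from $\prod_\omega A_n\subset M'\cap M^\omega$ one gets that $M_n:=A_n'\cap M$ exhausts $M$ (Lemma~\ref{increase}); then, since $\Gamma$ is non-amenable, $M_n$ is eventually not amenable relative to $B$, and a spectral gap rigidity result for the Gaussian deformation (Corollary~\ref{embed}, driven by hypothesis (1)) yields that $A_n\subset M_n'\cap M$ satisfies, up to a large central projection $p_n$, the Popa intertwining $A_np_n\prec_M^s(\bar\otimes_{l>N}B_l)\rtimes\Gamma$ for \emph{every} $N$. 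Note this is intertwining, not approximate inclusion, and the target retains $\rtimes\Gamma$.

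Third, your contradiction via ``rates of escape in $k$'' is not the mechanism. The essential point you are missing is the role of the orthonormal \emph{sequence} $(\xi_k^m)_{m\in\mathbb N}$ in each fixed $\mathcal H_k$, not merely one almost-invariant vector per $k$. Fix $n$ with $\tau(p_n)$ close to $1$. Residuality of $A_n$ forces the existence of a \emph{single} $k$ such that all the Gaussian unitaries $U_k^m=\omega(\xi_k^m)$, $m\in\mathbb N$, are uniformly well-approximated by $A_n$. But $A_np_n\prec_M^s(\bar\otimes_{l>k}B_l)\rtimes\Gamma$ means elements of $A_np_n$ are captured, up to small error, by a projection onto a space of the form (finite-dimensional subspace of $\bar\otimes_{l\leq k}B_l$)$\,\otimes\,$(tail). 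The infinitely many pairwise-orthogonal $V_m:=U_k^m-\tau(U_k^m)\in B_k\ominus\mathbb C1$ tend weakly to zero, so their images under that finite-rank projection vanish---contradicting that each $V_mp_n$ has $\|\cdot\|_2$ bounded below. The tension is not between growth rates but between ``$A_n$ absorbs infinitely many orthogonal elements of a fixed $B_k$'' and ``$A_n$ intertwines past $B_k$''.
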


For the definition of Gaussian actions, we refer the reader to Section \ref{gauss}.
Next, we provide a class of examples to which Theorem \ref{A} applies, and discuss a connection with a problem posed in \cite{JS85}.

\begin{example}\label{free} Let $\Gamma=\mathbb F_n$ be the free group on $n\geq 2$ generators.
Denote by $|g|$ the word length of an element $g\in\Gamma$ with respect to a free set of generators. Let $t>0$. By \cite{Ha79},  the function $\varphi_t:\Gamma\rightarrow\mathbb R$ given by $\varphi_t(g)=e^{-t|g|}$ is positive definite. Let $\rho_t:\Gamma\rightarrow\mathcal O(\mathcal H_t)$ be the GNS orthogonal representation associated to $\varphi_t$ and $\xi_t\in\mathcal H_t$ such that $\langle\rho_t(g)(\xi_t),\xi_t\rangle=\varphi_t(g)$, for all $g\in\Gamma$. 
Let  $\tilde{\rho}_t=\rho_t\otimes\text{Id}_{\ell^2(\N)}:\Gamma\rightarrow\mathcal O(\mathcal H_t\otimes\ell^2(\N))$ be the direct sum of infinitely many copies of $\rho_t$. 

Let $(t_k)$ be any sequence of positive numbers converging to $0$ and put $\pi_k:=\tilde{\rho}_{t_k}:\Gamma\rightarrow\mathcal O(\mathcal H_{t_k}\otimes\ell^2(\N))$.  Then the representations $(\pi_k)_{k\in\N}$ satisfy the hypothesis of Theorem \ref{A}.
Firstly, given $t>0$, note that $\varphi_t^l\in\ell^2(\Gamma)$, and hence $\rho_t^{\otimes l}$ is contained in a multiple of the left regular representation of $\Gamma$,  whenever $l>\log(2n-1)/(2t)$.
This implies that $\pi_k^{\otimes l}$ is contained in a multiple of the left regular representation of $\Gamma$, for some integer $l=l(k)\geq 1$.
Secondly, note that the vectors $\xi_k^m:=\xi_{t_k}\otimes\delta_m\in\mathcal H_{t_k}\otimes\ell^2(\N)$ satisfy $\sup_{m\in\mathbb N}\|\pi_k(g)(\xi_k^m)-\xi_k^m\|=\sqrt{2(1-\varphi_{t_k}(g))}\rightarrow 0$, as $k\rightarrow\infty$, for any $g\in\Gamma$. 
\end{example}

\begin{remark}
Theorem \ref{A} also sheds new light on a problem of Jones and Schmidt.  In \cite[Theorem 2.1]{JS85}, they proved that any ergodic but not strongly ergodic countable measure preserving
equivalence relation $\mathcal R$ on a probability space $(X,\mu)$ admits a hyperfinite quotient. More specifically, there exists an ergodic hyperfinite measure preserving equivalence relation $\mathcal R_{\text{hyp}}$ on a probability space $(Y,\nu)$ together with a factor map $\pi:(X,\mu)\rightarrow (Y,\nu)$ such that $(\pi\times\pi)(\mathcal R)=\mathcal R_{\text{hyp}}$, almost everywhere.
In \cite[Problem 4.3]{JS85}, Jones and Schmidt asked whether there is always such a quotient with the additional property that $\mathcal R_0:=\{(x_1,x_2)\in\mathcal R\mid\pi(x_1)=\pi(x_2)\}$ is strongly ergodic on almost all of its ergodic components.
If such a quotient exists, then following \cite[Definition 1.3]{IS18} we say that $\mathcal R$ has the {\it Jones-Schmidt property}.
If $\mathcal R$ has the Jones-Schmidt property and we let $M=L(\mathcal R)$, $A=L^{\infty}(X)$, then there exists a decreasing sequence of von Neumann subalgebras $(B_n)_{n\in\N}$ of $A$ such that $M'\cap A^{\omega}=\cap_nB_n^{\omega}$ and $B_{n+1}\subset B_n$ has finite index for every $n\in\N$ (see \cite[Proposition 5.3 and the proof of Lemma 6.1]{IS18}).

In \cite[Theorems E and F]{IS18}, the authors settled in the negative \cite[Problem 4.3]{JS85} by providing examples of equivalence relations $\mathcal R$ without the Jones-Schmidt property. This was achieved by showing that for certain $\mathcal R$, in the above notation, $M'\cap A^{\omega}$ is not equal to $\cap_n B_n^{\omega}$, for any decreasing sequence of von Neumann subalgebras $(B_n)_{n\in\N}$ of $A$ with $B_{n+1}\subset B_n$ of finite index for every $n\in\N$. 

Theorem \ref{A} allows us to strengthen the negative solution to \cite[Problem 4.3]{JS85} given in \cite{IS18}.  More precisely, in the context of Theorem \ref{A}, assume that $\Gamma$ is not inner amenable and let $\mathcal R$ be the equivalence relation associated to the action $\Gamma\curvearrowright B$.
Since $M=L(\mathcal R)=B\rtimes\Gamma$ has no residual sequence by Theorem \ref{A}, while $M'\cap A^{\omega}=M'\cap M^{\omega}$ by \cite{Ch82}, we deduce that $M'\cap A^{\omega}$ cannot be written as $\cap_nB_n^{\omega}$, for {\it any} decreasing sequence $(B_n)_{n\in\N}$ of von Neumann subalgebras of $A$.
\end{remark}

Our second main result shows that the conclusion of Theorem \ref{A} also holds if we replace Gaussian by free Bogoljubov actions (see Section \ref{gauss}). Moreover, we establish the following stronger statement:
\begin{main}\label{B}
Let $\Gamma$ be a countable non-inner amenable group. For every $k\in\mathbb N$, let $\pi_k:\Gamma\rightarrow\mathcal O(\mathcal H_k)$ be an orthogonal representation such that \begin{enumerate} \item $\pi_k^{\otimes l}$ is weakly contained in the left regular representation of $\Gamma$, for some $l=l(k)\in\mathbb N$, and
\item there are orthogonal unit vectors $\xi_k^1,\xi_k^2\in\mathcal H_k$ such that $\max_{m\in\{1,2\}}\|\pi_k(g)(\xi_k^m)-\xi_k^m\|\rightarrow 0$, as $k\rightarrow\infty$, for every $g\in\Gamma$. \end{enumerate}

Let $\Gamma\curvearrowright (B_k,\tau_k)$ be the free Bogoljubov action associated to $\pi_k$, and $\Gamma\curvearrowright (B,\tau):=\bar{\otimes}_{k\in\mathbb N}(B_k,\tau_k)$ be the diagonal product action. Define $M=B\rtimes\Gamma$.

Then the II$_1$ factor $M$ does not admit a residual sequence of von Neumann subalgebras.

 Moreover, there exists a separable von Neumann subalgebra $P\subset M'\cap M^{\omega}$ such that there is no sequence $(A_n)_{n\in\mathbb N}$ of von Neumann subalgebras of $M$ satisfying $P\subset \prod_{\omega} A_n\subset M'\cap M^{\omega}$. 
\end{main}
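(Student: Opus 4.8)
The strategy is to exploit the special structure of the Gaussian/free Bogoljubov building blocks to produce, on the one hand, a large supply of almost-central elements witnessing ``non-triviality'' of the central sequence algebra, and on the other hand, an obstruction preventing these elements from sitting inside the tail of any decreasing sequence of subalgebras of $M$. Concretely, each $\pi_k$ carries almost-invariant vectors $\xi_k^m$, so the associated generators $u_k^m\in B_k$ (Gaussian variables, resp.\ semicircular/Haar-type elements in the free case) become asymptotically central in $M$ as $k\to\infty$, since $\|\alpha_g(u_k^m)-u_k^m\|_2$ is controlled by $\|\pi_k(g)\xi_k^m-\xi_k^m\|\to 0$ and $u_k^m$ commutes with $B$. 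Diagonalising over $k$ along $\omega$ produces many elements of $M'\cap M^\omega$; for the ``moreover'' part in Theorem \ref{B}, one uses the two orthogonal vectors $\xi_k^1,\xi_k^2$ to build, inside $\prod_\omega B_k\subset M'\cap M^\omega$, a copy $P$ of (say) $L(\F_2)$ or $L^\infty[0,1]\bar\otimes L(\F_2)$ — a single separable subalgebra that is ``rigid enough'' to detect the failure.

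The heart of the argument is the \emph{obstruction}. Suppose towards a contradiction that $(A_n)$ is a residual sequence (resp.\ that $P\subset\prod_\omega A_n\subset M'\cap M^\omega$ for some decreasing $(A_n)$). Condition (3) in Definition \ref{mdef} forces each $A_n$ to be large enough that sequences drawn from it are central; condition (2) forces it to absorb all central sequences. The key point is that assumption (1) — that some tensor power $\pi_k^{\otimes l(k)}$ is weakly contained in the regular representation — makes $B_k\rtimes\Gamma$, and more precisely the position of $B_k$ inside $M$, \emph{rigid} in the sense of Popa's intertwining/spectral gap theory: any subalgebra of $M$ that contains asymptotically central elements coming from level $k$ must, for $k$ large, essentially contain (a corner of) $B_k$ itself, because the $\Gamma$-action on $B_k$ has a strong ergodicity/spectral gap feature away from the almost-invariant directions. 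I would formalise this via a spectral gap argument: for a fixed finite subset $F\subset\Gamma$ and $\eps>0$, there is $k_0$ and $\delta>0$ such that any unit $x\in M$ with $\max_{g\in F}\|\alpha_g(x)-x\|_2\le\delta$ and $x$ ``orthogonal to $B_{<k_0}\rtimes\Gamma$ in an appropriate sense'' must have large component along the $\xi_k^m$-directions for $k\ge k_0$. Running this for a sequence $F_j\uparrow\Gamma$, $\eps_j\downarrow 0$ shows that any residual $A_n$ must contain, up to small error, the generators $u_k^m$ for all large $k$; but then $A_n\supset A_{n+1}\supset\cdots$ all contain (approximately) the \emph{same} infinite family of non-commuting generators, so $\cap_n A_n^\omega$ is strictly larger than what condition (3) permits — one exhibits a central sequence, or an element of $P$, that cannot be approximated in $\|\cdot\|_2$ uniformly from $A_n$ for $n$ large, contradicting residuality. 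In the Theorem \ref{B} formulation the contradiction is cleaner: $P$ is a fixed separable algebra, and one shows directly that no decreasing $(A_n)$ can satisfy $P\subset\prod_\omega A_n$ while staying inside $M'\cap M^\omega$, because membership in $M'\cap M^\omega$ forces the $A_n$ to shrink past level $n$ (by non-inner-amenability of $\Gamma$, which kills central sequences supported on $\Gamma$), whereas containing $P$ forces them to retain generators from infinitely many levels $k$.

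The main obstacle I anticipate is making precise and quantitative the dichotomy ``either a subalgebra contains asymptotically central elements from level $k$ and hence (a corner of) $B_k$, or it is spectral-gap-absorbed into the lower levels.'' This is where condition (1) must be used in earnest: weak containment in the regular representation of a tensor power of $\pi_k$ translates, via the Gaussian/free-Gaussian functor, into a mixing-type or ``$L^2$-rigidity'' property of $B_k\rtimes\Gamma$ relative to $L\Gamma$ and to $\bar\otimes_{j\ne k}B_j\rtimes\Gamma$, and one needs a clean intertwining-by-bimodules statement (in the spirit of Popa's $s$-malleable deformation / spectral gap rigidity, as in \cite{Po06a,Po06b}) to conclude that a subalgebra which is ``almost invariant under the malleable deformation'' must in fact sit inside, or contain a corner of, the relevant block. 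The bookkeeping needed to run this simultaneously over all levels $k$ and all $n$, while tracking the diagonal ultrafilter limits, is the technically delicate part; the rest (Gaussian estimates, non-inner-amenability killing $L\Gamma$-central sequences, and assembling $P$ from the $\xi_k^m$) is routine in present tense but must be set up carefully so that the spectral gap constants can be chosen uniformly.
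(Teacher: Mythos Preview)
Your setup is largely correct: the semicircular generators $W_{k,m}=W(\xi_k^m)\in B_k$ give asymptotically central elements, and the two orthogonal almost-invariant vectors $\xi_k^1,\xi_k^2$ produce a copy $P\cong L(\mathbb F_2)$ inside $M'\cap M^\omega$. You are also right that condition~(1) feeds into a spectral-gap/deformation-rigidity argument in the style of \cite{Po06a,Po06b,Bo12}, and that non-inner amenability is what pushes central sequences into $B^\omega$.

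The genuine gap is the mechanism of contradiction. You describe it as a tension between ``$A_n$ must shrink past level $n$'' and ``$A_n$ must retain generators from infinitely many levels,'' and you propose to exhibit a central sequence that cannot be approximated from $A_n$. That is not how the argument closes, and I do not see how to make your version work: the generators $u_k^m$ are themselves approximately central, so their approximate presence in every $A_n$ does not by itself violate condition~(3) of Definition~\ref{mdef}. The actual contradiction is an \emph{amenability} obstruction, and it runs through relative commutants rather than through $A_n$ directly. From $\prod_\omega A_n\subset M'\cap M^\omega$ one deduces (Lemma~\ref{increase}) that $M_n:=A_n'\cap M$ increases to $M$. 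Because $\Gamma$ is non-amenable, $M_n$ eventually has no direct summand amenable relative to $B$; applying Corollary~\ref{embed} to each block decomposition $M=(B_i\bar\otimes(\bar\otimes_{l\ne i}B_l))\rtimes\Gamma$ and then \cite[Lemma~2.8(2)]{DHI16} shows $(M_n'\cap M)p_n\prec_M^s(\bar\otimes_{l>N}B_l)\rtimes\Gamma$ for every $N$, with $\tau(p_n)\to 1$. Non-inner amenability (via \cite{Ch82}) upgrades this to $(M_n'\cap M)r_n\prec_M^s\bar\otimes_{l>N}B_l$, and Lemma~\ref{generalamen} converts these uniform intertwinings into \emph{amenability} of $(M_n'\cap M)r_n$. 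Since $A_n\subset M_n'\cap M$, a large corner of each $A_n$ is amenable. But $P\cong L(\mathbb F_2)$ sits in $\prod_\omega A_n$, so after a diagonal selection $P$ is approximated in $\|\cdot\|_2$ by amenable algebras; Corollary~\ref{amenable} (a stability-under-limits statement for amenability) then forces $P$ to be amenable, which is absurd.

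In short, your rigidity input is aimed at the wrong target: it should be applied to $M_n=A_n'\cap M$ (which is large) to conclude that $M_n'\cap M\supset A_n$ is essentially amenable, rather than to $A_n$ itself to conclude it ``contains a corner of $B_k$.'' Once you see that the punchline is ``$A_n$ is amenable but $P$ is not,'' the pieces you have assembled (spectral gap via condition~(1), non-inner amenability, the free pair $\xi_k^1,\xi_k^2$) slot in exactly where they are needed.
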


Since $\Gamma=\mathbb F_n$ is not inner amenable for any $n\geq 2$, and the representations $(\pi_k)_{k\in\N}$ from Example \ref{free} satisfy the hypothesis of Theorem \ref{B}, its conclusion holds for those examples. Moreover, in the notation from  Example \ref{free}, $\pi_k=\rho_{t_k}\oplus\rho_{t_k}$ also satisfy the hypothesis of Theorem \ref{B}.

In order to put Theorem \ref{B} into a better perspective and to contrast it with Theorem \ref{A}, we note the following result:

\begin{mprop}\label{C}
	Let $(M_n,\tau_n)$, $n\in\mathbb N$, be a sequence of tracial von Neumann algebras. Let $P$, $Q$ be commuting separable von Neumann subalgebras of $\prod_{\omega}M_n$. Assume that $P$ is  amenable. 
	
	Then there exist commuting von Neumann subalgebras $P_n,Q_n$ of $M_n$, for every $n\in\mathbb N$, such that $P\subset\prod_{\omega}P_n$ and $Q\subset\prod_{\omega}Q_n$.
\end{mprop}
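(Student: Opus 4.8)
The plan is to realize $P$ and $Q$ by sequences of subalgebras via a separability-plus-diagonalization argument, using the amenability of $P$ to control the "commuting" relation in the ultraproduct. First I would fix countable $\|\cdot\|_2$-dense $*$-subalgebras $P_0\subset P$ and $Q_0\subset Q$, closed under the relevant algebraic operations, and for each element $p\in P_0$, $q\in Q_0$ choose lifts $(p^{(n)})_n$, $(q^{(n)})_n$ in $\prod_n M_n$ with $\|p^{(n)}\|\leq\|p\|$ and $\|q^{(n)}\|\leq\|q\|$. The naive choice $P_n := \mathrm{W}^*(\{p^{(n)} : p\in P_0\})$, $Q_n := \mathrm{W}^*(\{q^{(n)} : q\in Q_0\})$ gives $P\subset\prod_\omega P_n$ and $Q\subset\prod_\omega Q_n$, but there is no reason for $P_n$ and $Q_n$ to commute inside $M_n$: we only know $\|[p^{(n)},q^{(n)}]\|_2\to 0$ along $\omega$ for the chosen generators, not that the commutators vanish, and commutators of products of generators need not be small.

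The key idea to fix this is to use that $P$ is amenable, hence by Connes' theorem $P$ is hyperfinite and therefore (being separable and acting on the separable $L^2$) admits an increasing sequence of finite-dimensional subalgebras with dense union; more usefully, $P$ has an approximately central net of unitaries or — cleaner — one can invoke the fact that a separable amenable von Neumann algebra inside an ultraproduct $\prod_\omega M_n$ lifts to a sequence of subalgebras $P_n\subset M_n$ with $P=\prod_\omega P_n$ (this is standard; it follows from hyperfiniteness by lifting the finite-dimensional approximants compatibly and taking a diagonal limit). Having such a genuine lift $P=\prod_\omega P_n$ with $P_n$ finite-dimensional (or hyperfinite), I would then, for each fixed $q\in Q_0$ with lift $(q^{(n)})_n$, replace $q^{(n)}$ by its "projection onto the commutant of $P_n$": concretely, let $E_n$ denote the trace-preserving conditional expectation of $M_n$ onto $P_n'\cap M_n$, and set $\tilde q^{(n)} := E_n(q^{(n)})$. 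Then define $Q_n := \mathrm{W}^*(\{\tilde q^{(n)} : q\in Q_0\})\subset P_n'\cap M_n$, so $P_n$ and $Q_n$ commute by construction. It remains to check that $(\tilde q^{(n)})_n$ still represents $q$ in $\prod_\omega M_n$, i.e. $\|q^{(n)}-E_n(q^{(n)})\|_2\to 0$ along $\omega$. This is exactly where commutation of $q\in Q$ with $P$ in $\prod_\omega M_n$ is used: since $[q,p]=0$ in $\prod_\omega M_n$ for all $p\in P$, and $P=\prod_\omega P_n$, a standard ultraproduct argument (break $M_n$ into the $P_n$-bimodule decomposition, or use that $E_n(q^{(n)})$ is the unique element of $P_n'\cap M_n$ minimizing $\|q^{(n)}-\cdot\|_2$, combined with the fact that one can approximate by averaging $q^{(n)}$ over a finite set of unitaries in $P_n$ lifting a good generating set of $P$) shows $\|q^{(n)}-E_n(q^{(n)})\|_2\to 0$ along $\omega$. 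Passing to the von Neumann algebras generated, $Q\subset\prod_\omega Q_n$ as desired.

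I expect the main obstacle to be the lifting step: producing a genuine sequence $(P_n)$ of subalgebras of $M_n$ with $P=\prod_\omega P_n$ (as opposed to merely $P\subset\prod_\omega P_n$), which is where amenability is essential — for a non-amenable $P$ such a lift can fail, and indeed the whole point of the hypothesis is that hyperfinite subalgebras of ultraproducts are "liftable." I would handle this by writing $P$ as the weak closure of an increasing union $\bigcup_j F_j$ of finite-dimensional subalgebras, lifting each $F_j$ to a finite-dimensional $F_j^{(n)}\subset M_n$ with $F_j^{(n)}\subset F_{j+1}^{(n)}$ and matrix units close to the chosen lifts of the matrix units of $F_j$ (possible on a set of $n$ in $\omega$ by a finite approximation argument, using that finite-dimensional algebras are "stable" — small perturbations of approximate matrix units can be corrected to exact ones), choosing for each $n$ an index $j(n)\to_\omega\infty$, and setting $P_n := F_{j(n)}^{(n)}$; then $\prod_\omega P_n \supset P$ and, because the $F_j$ exhaust $P$, in fact $\prod_\omega P_n = P$ provided one is careful that $P_n$ does not grow "too fast" — alternatively one simply takes $P_n = F_{j(n)}^{(n)}$ and notes $P\subseteq\prod_\omega P_n$ suffices for the final conclusion as long as $P_n$ is built to commute with $Q_n$, so one can even avoid the exact equality. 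The remaining estimates (boundedness of the expectations $E_n$, the averaging argument for $\|q^{(n)}-E_n(q^{(n)})\|_2\to 0$, and stability under generation of von Neumann algebras) are routine ultraproduct bookkeeping.
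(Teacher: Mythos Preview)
Your approach is correct and is essentially the same as the paper's: write $P$ as an increasing union of finite-dimensional subalgebras $B_k$ (Connes), lift each $B_k$ into $M_n$ for $n$ in a set $S_k\in\omega$, run a diagonal argument to choose $k_n\to_\omega\infty$ with $n\in S_{k_n}$, and set $P_n=B_{k_n}$; the key point in both arguments is that for $B_k$ finite-dimensional one has $B_k'\cap\prod_\omega M_n=\prod_\omega(B_k'\cap M_n)$, which is exactly your statement $\|q^{(n)}-E_n(q^{(n)})\|_2\to 0$. The only cosmetic difference is that the paper simply takes $Q_n=P_n'\cap M_n$ (the full relative commutant), which is cleaner than generating $Q_n$ from the projected lifts $E_n(q^{(n)})$ and spares you the bookkeeping; also note that nesting the lifts $F_j^{(n)}\subset F_{j+1}^{(n)}$ is unnecessary since only a single index $j(n)$ is used per $n$.
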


Proposition \ref{C} implies that for any tracial von Neumann algebra $(M,\tau)$ and any separable amenable von Neumann subalgebra $P\subset M'\cap M^{\omega}$, there is a sequence $(P_n)_{n\in\N}$ of von Neumann subalgebras of $M$ such that $P\subset\prod_{\omega}P_n$ and $M\subset\prod_{\omega}(P_n'\cap M)$, and therefore $P\subset\prod_nP_n\subset M'\cap M^{\omega}$. Consequently, the moreover part of Theorem \ref{B} cannot hold if $P$ is amenable. In particular, if $M=B\rtimes\Gamma$ is as in Theorem \ref{A} and $\Gamma$ is not inner amenable, then $M$ will not satisfy the moreover assertion of Theorem \ref{B}. Indeed, in this case $M'\cap M^{\omega}$ is abelian, being a subalgebra of $B^{\omega}$ by \cite{Ch82}.

In recent years there has been growing interest in the study of the notion of stability for groups (see the survey \cite{Th18}).
As a byproduct of the methods developed in this article, we obtain two applications to the notion of tracial stability for countable groups, formalised recently in \cite{HS17} (see also \cite{HS16}):

\begin{definition}[\!\!\!{\cite[Definition~3]{HS17}}]
	A countable group  $\Gamma$ is  \textit{$W^*$-tracially stable}  if for any sequence $(M_n,\tau_n)$, $n\in\N$, of tracial von Neumann algebras  and any homomorphism $\varphi:\Gamma\rightarrow\mathcal U(\prod_{\omega}M_n)$, there exist homomorphisms $\varphi_n:\Gamma\rightarrow \mathcal U(M_n)$, for every $n\in\N$, such that $\varphi = (\varphi_n)_n$.
\end{definition}
The class of W$^*$-tracially stable groups contains all abelian and free groups, as well as other classes of both amenable and non-amenable groups, see \cite{HS17}.
As an immediate consequence of Proposition \ref{C}, we deduce that the class of W$^*$-tracially stable groups is closed under taking the direct product with an amenable group. For the case of the direct product with an abelian group, this result is part of \cite[Theorem 1]{HS17}.

\begin{mcor}\label{D}
	Let $\Gamma$ and $\Sigma$ be $W^*$-tracially stable groups. Assume that $\Sigma$ is amenable. Then $\Gamma\times\Sigma$ is $W^*$-tracially stable.
\end{mcor}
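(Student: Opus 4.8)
The plan is to derive Corollary \ref{D} directly from Proposition \ref{C}. Suppose $\Gamma$ and $\Sigma$ are $W^*$-tracially stable with $\Sigma$ amenable, let $(M_n,\tau_n)$ be tracial von Neumann algebras, and let $\varphi:\Gamma\times\Sigma\to\mathcal U(\prod_\omega M_n)$ be a homomorphism. Writing $\iota_\Gamma,\iota_\Sigma$ for the canonical inclusions of $\Gamma,\Sigma$ into $\Gamma\times\Sigma$, set $\varphi_\Gamma=\varphi\circ\iota_\Gamma$ and $\varphi_\Sigma=\varphi\circ\iota_\Sigma$. Let $P=\varphi_\Sigma(\Sigma)''$ and $Q=\varphi_\Gamma(\Gamma)''$, two von Neumann subalgebras of $\prod_\omega M_n$ that commute (since $\iota_\Gamma(\Gamma)$ and $\iota_\Sigma(\Sigma)$ commute in $\Gamma\times\Sigma$). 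They are separable because $\Gamma$ and $\Sigma$ are countable, and $P$ is amenable since $\Sigma$ is amenable. Proposition \ref{C} then yields commuting von Neumann subalgebras $P_n,Q_n\subseteq M_n$ with $P\subseteq\prod_\omega P_n$ and $Q\subseteq\prod_\omega Q_n$.

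The next step is to run $W^*$-tracial stability of $\Gamma$ and $\Sigma$ separately inside these pieces. The homomorphism $\varphi_\Gamma:\Gamma\to\mathcal U(Q)\subseteq\mathcal U(\prod_\omega Q_n)$ is represented by unitaries in $\prod_\omega Q_n$; by $W^*$-tracial stability of $\Gamma$ applied to the sequence $(Q_n,\tau_n)$, there exist homomorphisms $\varphi_{\Gamma,n}:\Gamma\to\mathcal U(Q_n)$ with $\varphi_\Gamma=(\varphi_{\Gamma,n})_n$ in $\prod_\omega Q_n$, hence also in $\prod_\omega M_n$. Similarly, $W^*$-tracial stability of $\Sigma$ applied to $(P_n,\tau_n)$ gives homomorphisms $\varphi_{\Sigma,n}:\Sigma\to\mathcal U(P_n)$ with $\varphi_\Sigma=(\varphi_{\Sigma,n})_n$. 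Since $P_n$ and $Q_n$ commute in $M_n$, the images $\varphi_{\Gamma,n}(\Gamma)$ and $\varphi_{\Sigma,n}(\Sigma)$ commute, so the map $\varphi_n:\Gamma\times\Sigma\to\mathcal U(M_n)$ defined by $\varphi_n(g,h)=\varphi_{\Gamma,n}(g)\varphi_{\Sigma,n}(h)$ is a well-defined group homomorphism for each $n$.

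Finally I would check that $\varphi=(\varphi_n)_n$. For $(g,h)\in\Gamma\times\Sigma$ we have $\varphi(g,h)=\varphi_\Gamma(g)\varphi_\Sigma(h)$, which in $\prod_\omega M_n$ equals $(\varphi_{\Gamma,n}(g))_n\,(\varphi_{\Sigma,n}(h))_n=(\varphi_{\Gamma,n}(g)\varphi_{\Sigma,n}(h))_n=(\varphi_n(g,h))_n$, using that multiplication in the ultraproduct is computed coordinatewise. This shows $\Gamma\times\Sigma$ is $W^*$-tracially stable. The only point requiring any care is the mild subtlety that $W^*$-tracial stability is phrased for an arbitrary sequence of tracial von Neumann algebras, so that we are entitled to apply it to the subalgebra sequences $(P_n,\tau_n)$ and $(Q_n,\tau_n)$ equipped with the restricted traces; once this is observed, the argument is essentially a bookkeeping of the ultraproduct identifications, and the genuine content is entirely contained in Proposition \ref{C}.
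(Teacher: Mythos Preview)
Your proof is correct and is exactly the argument the paper has in mind: the paper states Corollary~\ref{D} as an ``immediate consequence of Proposition~\ref{C}'' without spelling out the details, and what you have written is the natural unpacking of that claim. The only implicit fact you invoke is that the von Neumann algebra generated by a unitary representation of an amenable group is amenable, which is standard.
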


In contrast to Corollary \ref{D}, we show that any direct product of non-abelian free groups is not W$^*$-tracially stable, thereby answering a question of Atkinson in the negative (see \cite[Question~4.16]{At18}). 

\begin{main}\label{E}
	$\bF_l\times\bF_m$ is not $W^*$-tracially stable, for any $2\leq l,m\leq+\infty$. 
	
	Moreover, there exist a II$_1$ factor $M$ and a trace preserving $*$-homomorphism $\varphi:L(\mathbb F_2\times\mathbb F_2)\rightarrow M^{\omega}$ such that there is no sequence of homomorphisms $\varphi_n:\mathbb F_2\times\mathbb F_2\rightarrow\mathcal U(M)$ satisfying $\varphi_{|\mathbb F_2\times\mathbb F_2}=(\varphi_n)_n$.
\end{main}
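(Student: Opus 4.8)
The plan is to deduce Theorem \ref{E} from Theorem \ref{B}, using the free Bogoljubov construction with a suitable representation of $\mathbb F_2$. First I would observe that $L(\mathbb F_2\times\mathbb F_2)=L(\mathbb F_2)\bar\otimes L(\mathbb F_2)$, so a trace-preserving $*$-homomorphism $\varphi:L(\mathbb F_2\times\mathbb F_2)\to M^\omega$ is the same as a pair of commuting trace-preserving $*$-homomorphisms $\varphi_1,\varphi_2:L(\mathbb F_2)\to M^\omega$ with commuting ranges, equivalently a single homomorphism $\mathbb F_2\times\mathbb F_2\to\mathcal U(M^\omega)$ whose two factors generate freely-independent copies of $L(\mathbb F_2)$. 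To build such a thing non-liftably, I take $\Gamma=\mathbb F_2$ (which is non-inner amenable) and the representations $\pi_k=\rho_{t_k}\oplus\rho_{t_k}$ from the remark following Theorem \ref{B}, with $(t_k)\downarrow 0$; these satisfy the hypotheses of Theorem \ref{B}, so the factor $M=B\rtimes\Gamma$, with $B=\bar\otimes_k B_k$ the free Bogoljubov algebras, carries a separable $P\subset M'\cap M^\omega$ with no sequence $(A_n)$ of subalgebras of $M$ with $P\subset\prod_\omega A_n\subset M'\cap M^\omega$.

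Next I would set up the explicit homomorphism $\mathbb F_2\times\mathbb F_2\to\mathcal U(M^\omega)$. The first copy of $\mathbb F_2$ I embed via the canonical inclusion $L\mathbb F_2=L(\Gamma)\subset M$ (constant sequences), sending the two generators to the canonical group unitaries $u_{g_1},u_{g_2}\in M\subset M^\omega$. For the second copy of $\mathbb F_2$ I need two free Haar unitaries $v_1,v_2\in M'\cap M^\omega$ generating a copy of $L\mathbb F_2$, chosen to live inside (or generate) the separable subalgebra $P$ furnished by Theorem \ref{B}. The point is that the free Bogoljubov construction, being built from free semicircular/Haar-type generators indexed by the orthonormal vectors $\xi_k^1,\xi_k^2$, produces an abundance of near-central Haar unitaries that are freely independent: concretely, for each $k$ one gets unitaries $w_k^1,w_k^2\in B_k$ coming from the two vectors $\xi_k^1,\xi_k^2$, which become asymptotically central as $k\to\infty$ by hypothesis (2), and which are freely independent for each fixed $k$; the ultralimits $v_m=(w_k^m)_k$, $m=1,2$, are then freely independent Haar unitaries in $M'\cap M^\omega$, so they generate a copy of $L\mathbb F_2$, and together with $u_{g_1},u_{g_2}$ (which commute with everything in $M'\cap M^\omega$) they assemble into the desired $*$-homomorphism $\varphi:L(\mathbb F_2\times\mathbb F_2)\to M^\omega$. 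I would arrange $P=W^*(v_1,v_2)$, or ensure $v_1,v_2\in P$, so that the moreover part of Theorem \ref{B} applies.

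Now suppose for contradiction there is a sequence of homomorphisms $\varphi_n:\mathbb F_2\times\mathbb F_2\to\mathcal U(M)$ with $\varphi_{|\mathbb F_2\times\mathbb F_2}=(\varphi_n)_n$. Writing $\mathbb F_2\times\mathbb F_2=H_1\times H_2$ with $H_1$ the first free factor and $H_2$ the second, set $A_n:=W^*\big(\varphi_n(H_2)\big)\subset M$. Since $H_1$ and $H_2$ commute, for each $h\in H_1$ the element $\varphi_n(h)$ commutes with all of $A_n$; because $\varphi(H_1)$ is (a weakly dense subset of) $L\mathbb F_2\subset M\subset M^\omega$ — indeed $\varphi$ restricted to $H_1$ is, up to the identification, the identity embedding, which is surjective onto $L\mathbb F_2$ after taking von Neumann closure — we get $M\supset\varphi(L H_1)\subset\prod_\omega(A_n'\cap M)$, i.e. $A_n$ is asymptotically contained in the relative commutant of a weakly dense subalgebra, forcing $\prod_\omega A_n\subset M'\cap M^\omega$ up to the usual arguments. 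On the other hand $P=W^*(v_1,v_2)=\varphi(L H_2)$ and the liftability of $\varphi_{|H_2}$ gives $P\subset\prod_\omega A_n$. Thus $P\subset\prod_\omega A_n\subset M'\cap M^\omega$, contradicting the moreover conclusion of Theorem \ref{B}. This proves the moreover part of Theorem \ref{E}; the first assertion then follows by taking $\mathbb F_2\times\mathbb F_2\subset\mathbb F_l\times\mathbb F_m$ and noting that $W^*$-tracial stability would pass to the subgroup $\mathbb F_2\times\mathbb F_2$, together with the standard reduction from stability of a group (with respect to all tracial targets) to liftability of homomorphisms into ultraproducts.

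The main obstacle I anticipate is the step asserting $\prod_\omega A_n\subset M'\cap M^\omega$: one must check carefully that $\varphi$ restricted to the first free factor $H_1$ really does have weakly dense range in $L\mathbb F_2\subset M$ — which requires the explicit identification of the first copy of $\mathbb F_2$ with the canonical $L(\Gamma)$ and the fact that a self-map of $\mathbb F_2$ coming from a stability problem can be taken to be the identity on the distinguished copy — and then that commutation of $\varphi_n(H_1)$ with $A_n$, combined with this density, upgrades to the inclusion of ultraproduct algebras inside $M'\cap M^\omega$. A secondary subtlety is verifying freeness/Haar-ness of the ultralimit unitaries $v_1,v_2$ and that they genuinely land in the separable $P$ provided by Theorem \ref{B}; this should follow directly from the structure of the free Bogoljubov construction reviewed in Section \ref{gauss}, but the bookkeeping linking the $\xi_k^m$ to explicit near-central freely independent unitaries needs to be done with care. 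Everything else is routine.
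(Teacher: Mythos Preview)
There is a genuine gap at the step you yourself flag as the main obstacle, and it cannot be repaired along the lines you suggest. Your homomorphism $\varphi$ sends the first factor $H_1$ to the canonical unitaries $u_g\in L(\Gamma)\subset M$, but $L(\Gamma)$ is \emph{not} weakly dense in $M=B\rtimes\Gamma$; it is a proper subalgebra. From the fact that $\varphi_n(H_1)$ commutes with $A_n$ and $\varphi_n(h)\to u_h$ for $h\in H_1$, you correctly get $\sup_{a\in (A_n)_1}\|[a,u_h]\|_2\to 0$ for each $h\in\Gamma$, hence $\prod_\omega A_n\subset L(\Gamma)'\cap M^\omega$. But you do \emph{not} get $\prod_\omega A_n\subset M'\cap M^\omega$, which is what Theorem~\ref{B} requires. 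The missing piece is commutation of $\prod_\omega A_n$ with $B$, and there is no reason for this: the generators $\varphi_n(h)$, $h\in H_2$, individually almost commute with each fixed $b\in B$ (since their ultralimit lies in $M'\cap M^\omega$), but almost-commutation of two unitary generators with $b$ says nothing about the von Neumann algebra they generate. So the sandwich $P\subset\prod_\omega A_n\subset M'\cap M^\omega$ fails on the right, and Theorem~\ref{B} cannot be invoked as a black box.

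The paper does not try to reduce to Theorem~\ref{B}. Instead it re-runs the deformation/rigidity machinery directly on the lifted algebras $C_k=\varphi_k(\Gamma\times\{e\})''$ and $D_k=\varphi_k(\{e\}\times\Gamma)''$ inside $M$ (with the roles of your $H_1,H_2$ swapped). Since $D_k$ asymptotically contains $L(\Gamma)$, one uses Lemma~\ref{union} and Corollary~\ref{embed} to force $C_k(1-q_k)\prec_M^s(\bar\otimes_{l>N}B_l)\rtimes\Gamma$ for all $N$; separately, non-inner-amenability of $\Gamma$ (via the spectral-gap inequality~\eqref{inner}) forces $C_k r_k\prec_M^s B$. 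Intersecting these and applying Lemma~\ref{generalamen} gives that a large corner of $C_k$ is amenable, contradicting the fact (Corollary~\ref{amenable}) that $C_k$ asymptotically contains a copy of $L(\mathbb F_2)$. For Part~1 the paper uses a different, self-contained argument based on solidity of the individual crossed products $M_k=B_k\rtimes\Gamma$ (Lemma~\ref{lem:solid}). A minor additional point: W$^*$-tracial stability does not pass to arbitrary subgroups, so your reduction to $\mathbb F_2\times\mathbb F_2$ should instead use that $\mathbb F_2\times\mathbb F_2$ is a \emph{retract} of $\mathbb F_l\times\mathbb F_m$.
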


\subsection*{Structure of the paper} Besides the introduction there are four other sections in this paper. In Section~2 we recall some preliminaries and prove a few useful lemmas needed in the remainder of the paper. In Section~3, inspired by Boutonnet's work \cite{Bo12,Bo14}, we prove a structural result concerning II$_1$ factors associated to Gaussian and free Bogoljubov actions. In Section~4 this is used to prove Theorems~\ref{A} and \ref{B}. Finally in Section~5 we prove Proposition~\ref{C} and use the established machinery from the previous sections to deduce Theorem~\ref{E}.

\subsection*{Acknowledgements} We are very grateful to Scott Atkinson for bringing \cite[Question~4.16]{At18} to our attention and for stimulating discussions on tracial stability.

\section{Preliminaries}

\subsection{Tracial von Neumann algebras} 
We begin this section by recalling several notions and constructions involving tracial von Neumann algebras.

A {\it tracial von Neumann algebra} 
$(M,\tau)$ is  a von Neumann algebra $M$ equipped with a faithful normal tracial state $\tau:M\rightarrow\mathbb C$. We denote by $L^2(M)$ the completion of $M$ with respect to the $2$-norm $\|x\|_2=\sqrt{\tau(x^*x)}$ and consider the standard representation $M\subset\mathbb B(L^2(M))$. 
We also denote by $\mathcal U(M)$ the group of unitary elements of $M$, by $(M)_1=\{x\in M\mid\|x\|\leq 1\}$ the unit ball of $M$, and by $\mathcal Z(M)=M\cap M'$ the center of $M$. It follows from von Neumann's bicommutant theorem, that a self-adjoint set $S\subset M$ generates $M$ as a von Neumann algebra if and only if $S''=M$.

Let $P\subset M$ be a unital von Neumann subalgebra. {\it Jones' basic construction} of the inclusion $P\subset M$ is defined as the von Neumann subalgebra of $\mathbb B(L^2(M))$ generated by $M$ and the orthogonal projection $e_P:L^2(M)\rightarrow L^2(P)$, and is denoted by $\langle M,e_P\rangle$. The basic construction $\langle M,e_P\rangle$ carries a canonical semi-finite trace $\hat \tau$ defined by $\hat \tau(xe_Py)=\tau(xy)$, for all $x,y\in M$. We further denote by $E_P:M\rightarrow P$ the  conditional expectation onto $P$, by $P'\cap M=\{x\in M\mid\text{$xy=yx$, for all $y\in P$}\}$ the {\it relative commutant of $P$ in $M$}, and by $\mathcal N_{M}(P)=\{u\in\mathcal U(M)\mid uPu^*=P\}$
the {\it normalizer of $P$ in $M$}. We say that $P$ is {\it regular} in $M$ if $\mathcal N_M(P)$ generates $M$ as a von Neumann algebra.

Any trace preserving action $\Gamma\curvearrowright^{\sigma}(M,\tau)$ extends to a unitary representation $\sigma:\Gamma\rightarrow\mathcal U(L^2(M))$ called the {\it Koopman representation of $\sigma$}.

Let $\omega$ be a free ultrafilter on $\mathbb N$. Consider the C$^*$-algebra $\ell^{\infty}(\mathbb N,M)=\{(x_n)\in M^{\mathbb N}\mid\sup\|x_n\|<\infty\}$ together with its closed ideal $\mathcal I=\{(x_n)\in\ell^{\infty}(\mathbb N,M)\mid\lim\limits_{n\rightarrow\omega}\|x_n\|_2=0\}.$  Then $M^{\omega}:=\ell^{\infty}(\mathbb N,M)/\mathcal I$ is a tracial von Neumann algebra, called the {\it ultrapower } of $M$, whose canonical trace is given by $\tau_{\omega}(x)=\lim\limits_{n\rightarrow\omega}\tau(x_n)$, for all $x=(x_n)\in M^{\omega}$. If $(M_n)_n$ is a sequence of von Neumann subalgebras of $M$, then their {\it ultraproduct}, denoted by $\prod_\omega M_n$, can be realized as the von Neumann subalgebra of $M^{\omega}$ consisting of $x=(x_n)$ such that $\lim\limits_{n\rightarrow\omega}\|x_n-E_{M_n}(x_n)\|_2=0$.

\begin{lemma}
\label{increase}
Let $(M,\tau)$ be a tracial von Neumann algebra and $(A_n)_n$ be a sequence of von Neumann subalgebras of $M$ such that  $\prod_{\omega}A_n\subset M'\cap M^{\omega}$.
Then $\lim\limits_{n\rightarrow\omega}\|x-E_{A_n'\cap M}(x)\|_2=0$, for every $x\in M$. 

\end{lemma}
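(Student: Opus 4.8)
The plan is to unwind what the hypothesis $\prod_\omega A_n \subset M' \cap M^\omega$ gives us about a fixed $x \in M$, viewed as the constant sequence in $M^\omega$. First I would fix $x \in M$ and $\eps > 0$, and argue by contradiction: suppose that $\|x - E_{A_n'\cap M}(x)\|_2 \geq \eps$ for all $n$ in some set $S \in \omega$. For such $n$, since $x$ does not lie in $A_n' \cap M$ up to $\eps$ in $2$-norm, there must exist a unitary $u_n \in \mathcal U(A_n)$ which does not almost commute with $x$; more precisely, I would like to extract unitaries $u_n \in \mathcal U(A_n)$ with $\|u_n x - x u_n\|_2 \geq \delta$ for some $\delta = \delta(\eps) > 0$ and all $n \in S$. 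The key quantitative input here is the standard fact relating $\|x - E_{A_n'\cap M}(x)\|_2$ to $\sup_{u \in \mathcal U(A_n)} \|ux - xu\|_2$: if $x$ almost commutes with every unitary in $A_n$, then $x$ is close to its conditional expectation onto $A_n' \cap M$, because $E_{A_n'\cap M}(x)$ can be obtained as an element of the $\|\cdot\|_2$-closed convex hull of $\{u x u^* : u \in \mathcal U(A_n)\}$ (this is the usual averaging argument, e.g. via the unique element of minimal $2$-norm in that convex hull, which lies in $A_n' \cap M$).

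Having extracted such $u_n \in \mathcal U(A_n)$ for $n \in S$ (and setting $u_n = 1$ otherwise), I would form $u = (u_n)_n \in \prod_\omega A_n \subset M^\omega$, a unitary. By hypothesis $u \in M' \cap M^\omega$, so $u$ commutes with the constant sequence $x$, i.e. $\|ux - xu\|_{2,\omega} = \lim_{n\to\omega}\|u_n x - x u_n\|_2 = 0$. But by construction $\|u_n x - x u_n\|_2 \geq \delta$ for all $n \in S$ and $S \in \omega$, so the limit along $\omega$ is at least $\delta > 0$, a contradiction. This proves $\lim_{n\to\omega}\|x - E_{A_n'\cap M}(x)\|_2 = 0$.

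I expect the only real point requiring care is the averaging step: making precise that a bound of the form $\sup_{u\in\mathcal U(A_n)}\|ux-xu\|_2 \leq \delta$ forces $\|x - E_{A_n'\cap M}(x)\|_2$ to be small (quantitatively, something like $\leq \delta$ or a fixed multiple of it). This is classical — one takes the unique element $y_n$ of minimal $\|\cdot\|_2$-norm in the weakly closed convex hull $K_n$ of $\{u x u^* : u \in \mathcal U(A_n)\}$, notes that $y_n$ is fixed by conjugation by every $u \in \mathcal U(A_n)$ hence $y_n \in A_n' \cap M$, observes $E_{A_n'\cap M}(x) = y_n$ since $E_{A_n'\cap M}$ is $\|\cdot\|_2$-contractive and fixes $K_n$ pointwise onto... (more simply, $E_{A_n'\cap M}(x)$ lies in $K_n$ and is fixed, so equals $y_n$), and finally $\|x - y_n\|_2 \leq \sup_{u}\|x - uxu^*\|_2 = \sup_u \|ux - xu\|_2 \leq \delta$ because $x - y_n$ lies in the $\|\cdot\|_2$-closed convex hull of $\{x - uxu^*\}$. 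Contrapositively, $\|x - E_{A_n'\cap M}(x)\|_2 \geq \eps$ yields a unitary $u_n \in \mathcal U(A_n)$ with $\|u_n x - x u_n\|_2 \geq \eps$, which is exactly what the contradiction argument needs (with $\delta = \eps$). Everything else — realizing $x$ as a constant sequence, assembling $(u_n)$ into an element of $\prod_\omega A_n$, and reading off the commutation from the hypothesis — is routine.
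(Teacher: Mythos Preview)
Your proposal is correct and follows essentially the same approach as the paper's proof. The paper argues directly rather than by contradiction: for each $n$ it picks $u_n\in\mathcal U(A_n)$ with $\|x-u_nxu_n^*\|_2\geq\|x-E_{A_n'\cap M}(x)\|_2$ (exactly the averaging fact you spell out), forms $(u_n)\in\prod_\omega A_n\subset M'\cap M^\omega$, and concludes $\lim_{n\to\omega}\|x-u_nxu_n^*\|_2=0$, hence $\lim_{n\to\omega}\|x-E_{A_n'\cap M}(x)\|_2=0$; your contradiction packaging is equivalent.
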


{\it Proof.} 
Let $x\in M$. If $n\in\mathbb N$, we can find $u_n\in\mathcal U(A_n)$ such that $\|x-u_nxu_n^*\|_2\geq\|x-E_{A_n'\cap M}(x)\|_2$ (see, e.g., the proof of \cite[Theorem 2.5]{IS18}). Since $(u_n)\in\prod_{\omega}A_n$ and $\prod_{\omega}A_n\subset M'\cap M^{\omega}$, we get that $\lim_{n\rightarrow\omega}\|x-u_nxu_n^*\|_2=0$ and hence $\lim_{n\rightarrow\omega}\|x-E_{A_n'\cap M}(x)\|_2=0$. \hfill$\blacksquare$

\subsection{Hilbert bimodules}\label{ssec:Hilb} Let $(M_1,\tau_1)$ and $(M_2,\tau_2)$ be two tracial von Neumann algebras. An {\it $M_1$-$M_2$-bimodule} is a Hilbert space $\mathcal H$ endowed with two normal, commuting $*$-homomorphisms $\pi_1:M_1\rightarrow\mathbb B(\mathcal H)$ and $\pi_2:M_2^{\text{op}}\rightarrow\mathbb B(\mathcal H)$. 
We define a $*$-homomorphism $\pi_{\mathcal H}:M_1\otimes M_2^{\text{op}}\rightarrow\mathbb B(\mathcal H)$ by $\pi_{\mathcal H}(x\otimes y^{\text{op}})=\pi_1(x)\pi_2(y^{\text{op}})$ and write $x\xi y=\pi_1(x)\pi_2(y^{\text{op}})\xi$, for all $x\in M_1$, $y\in M_2$ and $\xi\in\mathcal H$. 
We also write $_{M_1}{\mathcal H}_{M_2}$ to indicate that $\mathcal H$ is an $M_1$-$M_2$-bimodule. Examples of bimodules include the {\it trivial} $M_1$-bimodule $_{M_1}L^2(M_1)_{M_1}$ and the {\it coarse} $M_1$-$M_2$-bimodule $_{M_1}L^2(M_1)\otimes L^2(M_2)_{M_2}$.

Next, we recall a few notions and constructions involving bimodules (see \cite[Appendix B]{Co94} and \cite{Po86}).
If $\mathcal H$ and $\mathcal K$ are $M_1$-$M_2$-bimodules, we say that $\mathcal H$ is {\it weakly contained} in $\mathcal K$ and write $\mathcal H\subset_{\text{weak}}\mathcal K$ if $\|\pi_{\mathcal H}(T)\|\leq\|\pi_{\mathcal K}(T)\|$, for all $T\in M_1\otimes M_2^{\text{op}}$. 
If $\mathcal H$ is an $M_1$-$M_2$-bimodule and $\mathcal K$ is an $M_2$-$M_3$-bimodule, then 
the {\it Connes fusion tensor product of $\mathcal H$ and $\mathcal K$} is an $M_1$-$M_3$-bimodule denoted by $\mathcal H\otimes_{M_2}\mathcal K$.
If $\Phi:M_1\rightarrow M_2$ is a unital normal completely positive map, then there is a unique $M_1$-$M_2$-bimodule, denoted by $\mathcal H_{\Phi}$, with a unit vector $\xi_{\Phi}\in\mathcal H_{\Phi}$ such that $M_1\xi_{\Phi} M_2$ is dense in $\mathcal H_{\Phi}$ and $\langle x\xi_{\Phi} y,\xi_{\Phi}\rangle=\tau_2(\Phi(x)y)$, for all $x\in M_1$ and $y\in M_2$. 
The next result analyzes the Connes fusion tensor product of bimodules associated to completely positive maps:

\begin{lemma}\label{compose}
Let $\Phi:M_1\rightarrow M_2$ and $\Psi:M_2\rightarrow M_3$ be unital normal completely positive maps, where $(M_1,\tau_1), (M_2,\tau_2), (M_3,\tau_3)$ are tracial von Neumann algebras. Then the following hold:
\begin{enumerate}
\item  The $M_1$-$M_3$-bimodule $\mathcal H_{\Psi\circ\text{Ad}(u)\circ\Phi}$ is isomorphic to a sub-bimodule of $\mathcal H_{\Phi}\otimes_{M_2}\mathcal H_{\Psi}$, for every $u\in\mathcal U(M_2)$.
\item If $\mathcal U$ is a set of unitaries in $M_2$  whose span is $\|.\|_2$-dense in $M_2$, then the $M_1$-$M_3$-bimodule $\mathcal H_{\Phi}\otimes_{M_2}\mathcal H_{\Psi}$ is isomorphic to a sub-bimodule of $\oplus_{u\in\mathcal U}\mathcal H_{\Psi\circ\text{Ad}(u)\circ\Phi}$.
\end{enumerate}
\end{lemma}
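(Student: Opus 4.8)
The plan is to exhibit, for each $u\in\mathcal U(M_2)$, an explicit unit vector $\zeta_u\in\mathcal H_\Phi\otimes_{M_2}\mathcal H_\Psi$ that is cyclic for a sub-bimodule isomorphic to $\mathcal H_{\Psi\circ\text{Ad}(u)\circ\Phi}$; statement (1) follows at once, and for (2) one shows that when $\mathrm{span}(\mathcal U)$ is $\|\cdot\|_2$-dense these sub-bimodules jointly generate the whole fusion.

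First, since $\|\xi_\Phi a\|^2=\tau_2(a^*a)=\|a\|_2^2$, the vector $\xi_\Phi$ is right $M_2$-bounded, so for $u\in\mathcal U(M_2)$ the vector $\zeta_u:=(\xi_\Phi u^*)\otimes_{M_2}\xi_\Psi$ is well-defined. Writing out the Connes fusion inner product and using that the $M_2$-valued inner product on $\mathcal H_\Phi$ is given on $M_1\xi_\Phi M_2$ by $\langle x_1\xi_\Phi a_1,x_2\xi_\Phi a_2\rangle_{M_2}=a_2^*\,\Phi(x_2^*x_1)\,a_1$ — which is precisely where complete positivity of $\Phi$ enters, to guarantee $\mathcal H_\Phi$ exists — one computes
\[
\langle x\,\zeta_u\,y,\ \zeta_u\rangle=\big\langle(u\,\Phi(x)\,u^*)\,\xi_\Psi\,y,\ \xi_\Psi\big\rangle=\tau_3\big((\Psi\circ\text{Ad}(u)\circ\Phi)(x)\,y\big),\qquad x\in M_1,\ y\in M_3 .
\]
In particular $\zeta_u$ is a unit vector because $\Phi$ and $\Psi$ are unital. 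Since $M_1\zeta_u M_3$ is by definition dense in $\overline{M_1\zeta_u M_3}$ and $\Psi\circ\text{Ad}(u)\circ\Phi$ is a unital normal completely positive map $M_1\to M_3$, the uniqueness part of the defining property of $\mathcal H_{\Psi\circ\text{Ad}(u)\circ\Phi}$ yields a bimodule isomorphism $\mathcal H_{\Psi\circ\text{Ad}(u)\circ\Phi}\cong\overline{M_1\zeta_u M_3}$ onto a sub-bimodule of $\mathcal H_\Phi\otimes_{M_2}\mathcal H_\Psi$, proving (1).

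For (2), set $K_u:=\overline{M_1\zeta_u M_3}\subset\mathcal H_\Phi\otimes_{M_2}\mathcal H_\Psi$. Since $\mathcal H_\Phi$ is generated by $M_1\xi_\Phi M_2$ and $\mathcal H_\Psi$ by $M_2\xi_\Psi M_3$, the fusion is generated as an $M_1$-$M_3$-bimodule by the vectors $(\xi_\Phi a)\otimes_{M_2}\xi_\Psi$ with $a\in M_2$, and a short computation gives $\|(\xi_\Phi a)\otimes_{M_2}\xi_\Psi\|^2=\tau_3(\Psi(a^*a))$. Using that $\mathrm{span}(\mathcal U)$ is $\|\cdot\|_2$-dense in $M_2$, one approximates each $a\in M_2$ in $\|\cdot\|_2$ by \emph{uniformly norm-bounded} linear combinations of elements of $\mathcal U$ (via a Kaplansky/Russo--Dye--type argument); since $\Psi$ is norm contractive, the corresponding vectors $(\xi_\Phi(\sum\lambda_i u_i))\otimes_{M_2}\xi_\Psi$ then converge to $(\xi_\Phi a)\otimes_{M_2}\xi_\Psi$, so the latter lies in $\overline{\mathrm{span}}\bigcup_{u\in\mathcal U}K_u$. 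Hence $\overline{\mathrm{span}}\bigcup_{u\in\mathcal U}K_u=\mathcal H_\Phi\otimes_{M_2}\mathcal H_\Psi$. Finally, a (transfinite) Gram--Schmidt procedure over a well-ordering of $\mathcal U$ replaces the overlapping sub-bimodules $K_u$ by mutually orthogonal sub-bimodules $K_u^0\subset K_u$ with the same closed linear span; consequently
\[
\mathcal H_\Phi\otimes_{M_2}\mathcal H_\Psi=\bigoplus_{u\in\mathcal U}K_u^0\ \hookrightarrow\ \bigoplus_{u\in\mathcal U}K_u\ \cong\ \bigoplus_{u\in\mathcal U}\mathcal H_{\Psi\circ\text{Ad}(u)\circ\Phi}
\]
isometrically as $M_1$-$M_3$-bimodules, using (1). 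This proves (2).

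The step I expect to be the main obstacle is the approximation in (2): the quantity controlling the distance in the fusion is $\|(\xi_\Phi a)\otimes_{M_2}\xi_\Psi\|^2=\tau_3(\Psi(a^*a))$, and $\tau_3\circ\Psi$ need not be $\|\cdot\|_2$-continuous on $M_2$, so one cannot simply invoke $\|\cdot\|_2$-density of $\mathrm{span}(\mathcal U)$ — one genuinely needs the approximating combinations to be uniformly bounded in operator norm, so that norm-boundedness of $\Psi$ (rather than any a priori continuity of $\tau_3\circ\Psi$) suffices. Part (1) is then routine bookkeeping of the bimodule structure, and the passage from ``joint closed span'' to ``sub-bimodule of the direct sum'' is the Gram--Schmidt argument above.
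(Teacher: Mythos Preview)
Your proof of part (1) is correct and essentially identical to the paper's: both exhibit the vector $\xi_\Phi u^*\otimes_{M_2}\xi_\Psi$ and compute its matrix coefficients to identify the cyclic sub-bimodule with $\mathcal H_{\Psi\circ\mathrm{Ad}(u)\circ\Phi}$.

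For part (2) your overall strategy --- show that the sub-bimodules $K_u$ jointly span the fusion, then pass to the direct sum via orthogonalisation --- is also the paper's approach (the paper simply asserts the density and says ``this implies the second assertion''). You go further by correctly isolating the analytical point: the fusion norm of $(\xi_\Phi a)\otimes_{M_2}\xi_\Psi$ equals $\tau_3(\Psi(a^*a))^{1/2}$, and $\tau_3\circ\Psi$ need not be dominated by $\tau_2$, so $\|\cdot\|_2$-density alone does not obviously suffice. However, your proposed remedy has a gap. The ``Kaplansky/Russo--Dye--type argument'' you invoke to produce \emph{uniformly norm-bounded} approximants from $\mathrm{span}(\mathcal U)$ is not available in the stated generality: Kaplansky density requires the approximating set to be a $*$-subalgebra, while $\mathrm{span}(\mathcal U)$ is only a linear space and need not be closed under products; Russo--Dye expresses contractions as convex combinations of unitaries of the ambient algebra, but those unitaries need not lie in $\mathcal U$. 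So as written, the existence of the bounded approximants is unjustified. (A secondary imprecision: once bounded approximants $a_n\to a$ in $\|\cdot\|_2$ are in hand, the convergence $\tau_3(\Psi((a-a_n)^*(a-a_n)))\to 0$ follows from \emph{normality} of $\Psi$, not merely from norm contractivity.)

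The fix is cheap in practice: in the paper's only application of this lemma (the proof of Corollary~\ref{embed}), $\mathcal U$ is a \emph{group} of unitaries, so $\mathrm{span}(\mathcal U)$ is a unital $*$-subalgebra; then $\|\cdot\|_2$-density forces $(\mathrm{span}\,\mathcal U)''=M_2$, and Kaplansky density genuinely yields bounded approximants. If you add the hypothesis that $\mathcal U$ is a subgroup of $\mathcal U(M_2)$ (or that $\mathrm{span}(\mathcal U)$ is a $*$-subalgebra), your argument for (2) goes through.
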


{\it Proof.} For $u\in\mathcal U(M_2)$, we denote $\eta_u:=\xi_{\Phi}u^*\otimes_{M_2}\xi_{\Psi}\in \mathcal H_{\Phi}\otimes_{M_2}\mathcal H_{\Psi}$. Following \cite[Section 1.3.1]{Po86}, for every $x\in M_1,y\in M_3$, we have that $$\langle x\eta_uy,\eta_u\rangle=\langle x\xi_{\Phi}u^*\otimes_{M_2} \xi_{\Psi}y,\xi_{\Phi}u^*\otimes_{M_2}\xi_{\Psi}\rangle=\langle x\xi_{\Phi}u^*p,\xi_{\Phi}u^*\rangle=\tau_2(\Phi(x)u^*pu),$$ where $p\in M_2$ is such that $\tau_2(zp)=\langle z\xi_{\Psi}y,\xi_{\Psi}\rangle=\tau_3(\Psi(z)y)$, for all $z\in M_2$. Thus, for all $x\in M_1,y\in M_3$ we have that $\langle x\eta_uy,\eta_u\rangle=\tau_2(u\Phi(x)u^*p)=\tau_3(\Psi(u\Phi(x)u^*)y)$. This shows that the $M_1$-$M_3$-bimodule $\overline{M_1\eta_uM_3}$ is isomorphic to $\mathcal H_{\Psi\circ\text{Ad}(u)\circ\Phi}$ and proves the first assertion of the lemma.

Finally,  note that if the span of $\mathcal U\subset \mathcal U(M_2)$ is $\|.\|_2$-dense in $M_2$, then the span of $\{\overline{M_1\eta_uM_3}\mid u\in\mathcal U\}$ is dense in $\mathcal H_{\Phi}\otimes_{M_2}\mathcal H_{\Psi}$. This implies the second assertion.
\hfill$\blacksquare$

\subsection {Intertwining-by-bimodules} We next recall from  \cite[Theorem 2.1 and Corollary 2.3]{Po03} the powerful {\it intertwining-by-bimodules} technique of Popa.

\begin {theorem}[\!\!\cite{Po03}]\label{corner} Let $(M,\tau)$ be a tracial von Neumann algebra and $P\subset pMp, Q\subset qMq$ be unital von Neumann subalgebras, for some projections $p,q\in M$. Then the following conditions are equivalent:

\begin{itemize}

\item There exist projections $p_0\in P, q_0\in Q$, a $*$-homomorphism $\theta:p_0Pp_0\rightarrow q_0Qq_0$  and a non-zero partial isometry $v\in q_0Mp_0$ such that $\theta(x)v=vx$, for all $x\in p_0Pp_0$.

\item There is no net $u_n\in\mathcal U(P)$ satisfying $\|E_Q(x^*u_ny)\|_2\rightarrow 0$, for all $x,y\in pMq$.

\item There exists a non-zero projection $f\in P'\cap \langle M,e_Q\rangle$ with $\hat \tau(f)<\infty$.
\end{itemize}

If one of these conditions holds true,  then we write $P\prec_{M}Q$, and say that {\it a corner of $P$ embeds into $Q$ inside $M$.}
If $Pp'\prec_{M}Q$ for any non-zero projection $p'\in P'\cap pMp$, then we write $P\prec^{s}_{M}Q$.
\end{theorem}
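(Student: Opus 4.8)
The plan is to prove $(1)\Leftrightarrow(3)$ and $(2)\Leftrightarrow(3)$, working inside the basic construction $\mathcal M:=\langle M,e_Q\rangle$ with its canonical semifinite trace $\hat\tau$; I will freely use that $\hat\tau(ae_Qb)=\tau(ab)$ for $a,b\in M$, that $e_Q$ commutes with $Q$, that $e_Qae_Q=E_Q(a)e_Q$ for $a\in M$, and that $\mathcal M$ acts on $L^2(M)$ with commutant the right $Q$-action, up to the usual care with the corner projections $p,q$. Since $u=pup$ for $u\in\mathcal U(P)$ and $E_Q(m)=E_Q(qmq)$ for $m\in M$, we have $E_Q(x^*uy)=E_Q\big((pxq)^*u(pyq)\big)$, so the limit condition in the second bullet may be quantified over all $x,y\in M$.

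The analytic heart is $(2)\Leftrightarrow(3)$. For $(2)\Rightarrow(3)$, argue contrapositively: by a routine directedness argument, a net $u_n\in\mathcal U(P)$ with $\|E_Q(x^*u_ny)\|_2\to0$ for all $x,y$ exists iff for every finite $F\subset(M)_1$ and every $\varepsilon>0$ some $u\in\mathcal U(P)$ has $\sum_{x,y\in F}\|E_Q(x^*uy)\|_2^2<\varepsilon$; so the failure of $(2)$ produces a finite $F\subset M$ and $\delta>0$ with $\sum_{x,y\in F}\|E_Q(x^*uy)\|_2^2\ge\delta$ for all $u\in\mathcal U(P)$. A short computation using $e_Qae_Q=E_Q(a)e_Q$ and the trace property of $\hat\tau$ rewrites $\sum_{x,y\in F}\|E_Q(x^*uy)\|_2^2=\hat\tau(T\,uTu^*)$, where $T:=\sum_{x\in F}xe_Qx^*\in\mathcal M_+$ has $\hat\tau(T)=\sum_{x\in F}\tau(x^*x)<\infty$ (and $\hat\tau(T^2)<\infty$). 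Let $\xi_0$ be the unique element of minimal $\|\cdot\|_{2,\hat\tau}$-norm in the $\|\cdot\|_{2,\hat\tau}$-closed convex hull of $\{uTu^*:u\in\mathcal U(P)\}$ inside $L^2(\mathcal M,\hat\tau)$. Uniqueness of the minimiser forces $u\xi_0u^*=\xi_0$ for all $u\in\mathcal U(P)$, so $\xi_0\in P'\cap\mathcal M$; moreover $0\le\xi_0\le\|T\|$, $\hat\tau(\xi_0^2)<\infty$, and pairing $\xi_0$ against $T$ along convex combinations $\sum_i\lambda_iu_iTu_i^*$ approximating it gives $\hat\tau(T\xi_0)=\lim\sum_i\lambda_i\hat\tau(Tu_iTu_i^*)\ge\delta>0$, so $\xi_0\ne0$. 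A nonzero spectral projection $f:=\mathbf 1_{[\varepsilon,\|T\|]}(\xi_0)\in P'\cap\mathcal M$ then has $\hat\tau(f)\le\varepsilon^{-2}\hat\tau(\xi_0^2)<\infty$, proving $(3)$. For $(3)\Rightarrow(2)$: if $f\in P'\cap\mathcal M$ is a nonzero projection with $\hat\tau(f)<\infty$ and $(u_n)$ were a net in $\mathcal U(P)$ with $\|E_Q(x^*u_ny)\|_2\to0$ for all $x,y$, then from $\langle u_n(x_ie_Qy_i)u_n^*,\,x_je_Qy_j\rangle_{\hat\tau}=\tau\big(y_j^*E_Q(x_j^*u_nx_i)y_iu_n^*\big)\to0$ and density of $\operatorname{span}\{xe_Qy\}$ in $L^2(\mathcal M,\hat\tau)$ (together with $\|u_n\eta u_n^*\|_{2,\hat\tau}=\|\eta\|_{2,\hat\tau}$) one gets $\langle u_n\eta u_n^*,\eta'\rangle_{\hat\tau}\to0$ for all $\eta,\eta'$; taking $\eta=\eta'=f$ gives $\hat\tau(f)=\langle u_nfu_n^*,f\rangle_{\hat\tau}\to0$, so $f=0$, a contradiction.

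For $(3)\Leftrightarrow(1)$, I would invoke the standard structure theory of $\langle M,e_Q\rangle$ as the algebra of right $Q$-linear bounded operators on the Hilbert $Q$-module $L^2(M)$, with $\hat\tau$ the dimension trace, so that a projection $f\in\mathcal M$ has $\hat\tau(f)<\infty$ iff $\mathcal H:=fL^2(M)$ is finitely generated as a right $Q$-module. Given $(3)$, $\mathcal H$ is a $P$-$Q$-bimodule (left $P$ since $f\in P'$, right $Q$ since $f\in\mathcal M$) which is finite over $Q$, hence $\cong q_0(L^2(Q)^{\oplus n})$ for a projection $q_0\in\mathbb M_n(Q)$; the commuting left $P$-action then becomes a normal unital $*$-homomorphism $P\to q_0\mathbb M_n(Q)q_0$, and extracting a right-$Q$-bounded vector from $\mathcal H\subset L^2(M)$ and compressing by a minimal projection yields the data $p_0,q_0',\theta,v$ of $(1)$ with $\theta(x)v=vx$. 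Conversely, given $(1)$ with (after the standard reduction) $v^*v=p_0$, set $q_1:=vv^*$; using $\theta(x)v=vx$, hence $xv^*=v^*\theta(x)$, together with the fact that $e_Q$ commutes with $Q\supseteq\theta(p_0Pp_0)$, one checks that $v^*e_Qv$ commutes with $p_0Pp_0$ and has finite $\hat\tau$, so a nonzero spectral projection of it lies in $(p_0Pp_0)'\cap\mathcal M$ with finite $\hat\tau$; as $p_1Pp_1=p_1(p_0Pp_0)p_1$ for $p_1\le p_0$ in $P$, this is precisely condition $(3)$ for the pair $(p_0Pp_0,Q)$, and feeding it into the already-established general implication $(3)\Rightarrow(1)$ (applied to that pair) returns $(1)$ for $(P,Q)$.

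The main obstacle I anticipate is the Hilbert-module bookkeeping in $(3)\Leftrightarrow(1)$ — correctly tracking the four corner projections through the module identification, producing the right-$Q$-bounded vector and the intertwining partial isometry, and passing cleanly between a corner $p_0Pp_0$ and $P$ itself — whereas the analytic core, the minimal-$\|\cdot\|_{2,\hat\tau}$-norm element of the $\operatorname{Ad}(\mathcal U(P))$-orbit of $T$ that manufactures the finite central projection out of the quantitative non-vanishing of the $E_Q(x^*uy)$, is robust and short. This is of course Popa's theorem \cite{Po03}, and the argument sketched above is essentially his.
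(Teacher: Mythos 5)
The paper does not prove this statement at all: Theorem \ref{corner} is quoted from Popa \cite{Po03} as a black box, so there is no in-paper proof to compare with, and what you have written is a reconstruction of Popa's original argument. Your treatment of the equivalence of the second and third conditions (rewriting $\sum_{x,y\in F}\|E_Q(x^*uy)\|_2^2=\hat\tau(TuTu^*)$ with $T=\sum_{x\in F}xe_Qx^*$, taking the minimal $\|\cdot\|_{2,\hat\tau}$-norm element of the closed convex hull of $\{uTu^*\}$, and cutting by a spectral projection) is the standard argument and is correct, apart from a wording slip: what produces the finite set $F$ and $\delta>0$ is condition (2) holding (non-existence of the net), not ``the failure of (2)''. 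The sketch of (3) $\Rightarrow$ (1) via the identification of $\langle M,e_Q\rangle$ with the right $Q$-linear operators on $L^2(M)$ is also the standard route, with the understood caveats that finite $\hat\tau$ yields a finitely generated projective right $Q$-module only after passing to a slightly smaller projection, and that one must still reduce from $q_0\mathbb{M}_n(Q)q_0$ to a corner of $Q$ itself.

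The genuine gap is in closing the cycle of implications. From $\theta,v$ you construct $v^*e_Qv$, whose spectral projections commute only with $p_0Pp_0$; this gives condition (3) for the pair $(p_0Pp_0,Q)$, not for $(P,Q)$, since (3) requires a finite-trace projection in $P'\cap\langle M,e_Q\rangle$ and (2) concerns nets in $\mathcal U(P)$, not in $\mathcal U(p_0Pp_0)$. Your final step feeds this into (3) $\Rightarrow$ (1) for the corner pair and ``returns (1) for $(P,Q)$'' --- but (1) for $(P,Q)$ was the hypothesis, so the argument is circular and the implications (1) $\Rightarrow$ (2) and (1) $\Rightarrow$ (3) for $(P,Q)$ are never established. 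This bridging step is not a formality: the naive $P$-saturation of your projection (the projection onto $\overline{P\,fL^2(M)}$) does lie in $P'\cap\langle M,e_Q\rangle$ but in general has infinite trace when $P$ is diffuse. The standard repairs are either to prove (1) $\Rightarrow$ (2) directly, by choosing partial isometries $w_i\in P$ with $w_i^*w_i\le p_0$ and $\sum_i w_iw_i^*$ equal to (most of) the central support of $p_0$ in $P$, noting $E_Q(vw_i^*uw_jv^*)=\theta(w_i^*uw_j)E_Q(vv^*)$ for every $u\in\mathcal U(P)$ and extracting a lower bound uniform in $u$ after shrinking $v$ so that $E_Q(vv^*)$ is bounded below on its support; or to first upgrade the corner-to-corner form of (1) to the unital amplified form $\theta:P\rightarrow q_0(\mathbb{M}_n(\mathbb{C})\otimes Q)q_0$ with $\theta(x)v=vx$ for all $x\in P$, for which $\|E_{\mathbb{M}_n(Q)}(vuv^*)\|_2=\|\theta(u)E_{\mathbb{M}_n(Q)}(vv^*)\|_2=\|E_{\mathbb{M}_n(Q)}(vv^*)\|_2>0$ for every $u\in\mathcal U(P)$, which immediately rules out the net. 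One of these arguments must be supplied for the proof to be complete.
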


\subsection {Amenability} A tracial von Neumann algebra $(M,\tau)$ is called {\it amenable} if there exists a positive linear functional $\varphi:\mathbb B(L^2(M))\rightarrow\mathbb C$ such that $\varphi_{|M}=\tau$ and $\varphi$ is $M$-{\it central}, in the following sense:  $\varphi(xT)=\varphi(Tx)$, for all $x\in M$ and $T\in\mathbb B(L^2(M))$. Equivalently, $(M,\tau)$ is amenable if $_ML^2(M)_M$ is weakly contained in $_ML^2(M)\otimes L^2(M)_M$.
By Connes' celebrated classification of amenable factors \cite{Co76}, $M$ is amenable if and only if it is approximately finite dimensional.

Next, we recall the notion of relative amenability introduced by Ozawa and Popa. 
Let $p\in M$ be a projection, and
 $P\subset pMp, Q\subset M$ be von Neumann subalgebras. 
Following \cite[Section 2.2]{OP07} we say that $P$ is  {\it amenable relative to $Q$ inside $M$} if there exists a positive linear functional $\varphi:p\langle M,e_Q\rangle p\rightarrow\mathbb C$ such that $\varphi_{|pMp}=\tau$ and $\varphi$ is $P$-central.  

As shown in \cite[Lemma 2.7]{DHI16}, relative amenability is closed under inductive limits.
Here we establish the following generalization of this result, which we will need later on.
Given a set $I$, we denote by $\lim_{n}$ a state on $\ell^{\infty}(I)$ which extends the usual limit.

\begin{lemma}\label{union}
Let $(M,\tau)$ be a tracial von Neumann algebra and $P,Q\subset M$ be von Neumann subalgebras. Assume that $P_n\subset M$, $n\in I$, is a net of von Neumann subalgebras such that $\|E_{P_n}(x)-x\|_2\rightarrow 0$, for all $x\in P$, and $p_n\in P_n'\cap M$ are projections such that $P_np_n$ is amenable relative to $Q$ inside $M$, for every $n\in I$.
Then there exists a projection $p\in P'\cap M$ such that $Pp$ is amenable relative to $Q$ inside $M$ and $\tau(p)\geq\lim_n\tau(p_n)$.
\end{lemma}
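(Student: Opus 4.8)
The plan is to mimic the inductive-limit argument of \cite[Lemma 2.7]{DHI16} but carry the limit along the net $I$ via the fixed state $\lim_n$ on $\ell^\infty(I)$, so that the resulting projection has trace at least $\lim_n\tau(p_n)$. First I would record what relative amenability gives for each $n$: there is a positive linear functional $\varphi_n:p_n\langle M,e_Q\rangle p_n\to\mathbb C$ with $(\varphi_n)_{|p_nMp_n}=\tau$ and $\varphi_n$ being $P_np_n$-central. It is convenient to transport each $\varphi_n$ to a functional on all of $\langle M,e_Q\rangle$ by setting $\tilde\varphi_n(T):=\varphi_n(p_nTp_n)$; this is positive, satisfies $\tilde\varphi_n(1)=\tau(p_n)$, $\tilde\varphi_n(x)=\tau(p_nx)=\tau(xp_n)$ for $x\in M$, and is ``almost $P$-central'' in the sense that $\tilde\varphi_n(xT)-\tilde\varphi_n(Tx)$ is controlled by how far $x$ is from commuting with $p_n$ and from lying in $P_n$ — and by hypothesis $\|E_{P_n}(x)-x\|_2\to 0$ for $x\in P$, while $p_n\in P_n'\cap M$ handles the commutation with $P_n$ exactly.

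Next I would define $\varphi:\langle M,e_Q\rangle\to\mathbb C$ by $\varphi(T):=\lim_n\tilde\varphi_n(T)$, using the state $\lim_n$ on $\ell^\infty(I)$; one must first check $n\mapsto\tilde\varphi_n(T)$ is a bounded net for each fixed $T$, which follows from $\|\tilde\varphi_n\|=\tilde\varphi_n(1)=\tau(p_n)\le 1$ together with positivity. Then $\varphi$ is positive, $\varphi(1)=\lim_n\tau(p_n)$, and for $x\in M$ we get $\varphi(x)=\lim_n\tau(xp_n)$. The subtle point is that $\varphi$ need not restrict to (a multiple of) the trace on all of $M$, only to the functional $x\mapsto\lim_n\tau(xp_n)$. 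To repackage this as genuine relative amenability of a corner, I would use a standard device: the weak$^*$-limit $p:=\lim_n p_n$ exists in $M$ in the weak operator topology along $\lim_n$ (it is a positive contraction), and $\varphi(x)=\tau(xp)$ for $x\in M$; a Cauchy–Schwarz / support argument then lets one replace $p$ by its range projection after cutting down, exactly as in the proof of \cite[Lemma 2.7]{DHI16}. For the $P$-centrality of $\varphi$, take $u\in\mathcal U(P)$ and $T\in\langle M,e_Q\rangle$; write $\varphi(uT)-\varphi(Tu)=\lim_n\big(\tilde\varphi_n(uT)-\tilde\varphi_n(Tu)\big)$, insert $E_{P_n}(u)$, and use (i) $\|u-E_{P_n}(u)\|_2\to 0$ hence (by Cauchy–Schwarz for the positive functional $\tilde\varphi_n$) the correction terms vanish along $\lim_n$, and (ii) once $u$ is replaced by an element of $\mathcal U(P_n)$ — more precisely one approximates — the $P_np_n$-centrality of $\varphi_n$ together with $p_n\in P_n'\cap M$ kills the remaining term; then pass to general $T$ by density and to general elements of $P$ by spanning unitaries.

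Finally I would assemble these facts: $p\in P'\cap M$ because $\varphi(x)=\tau(xp)$ is $P$-central as a functional on $M$ (so $p$ commutes with $\mathcal U(P)$), $\tau(p)=\varphi(1)\ge\lim_n\tau(p_n)$, and the functional $\varphi$ — after the cutdown to the range projection $p$, i.e. working with $T\mapsto\varphi(pTp)$ on $p\langle M,e_Q\rangle p$, renormalized — witnesses that $Pp$ is amenable relative to $Q$ inside $M$.

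I expect the main obstacle to be the bookkeeping in the $P$-centrality step: one is simultaneously taking a net limit, approximating $u\in\mathcal U(P)$ by elements that are close to $P_n$, and exploiting $p_n\in P_n'\cap M$, and it takes care to make the two approximations (in $n$ along $\lim_n$, and in the $\|\cdot\|_2$-density of $P$ in the $P_n$'s) interact correctly — in particular one should phrase things so that for each fixed $x\in P$ the quantity $\tilde\varphi_n(xT)-\tilde\varphi_n(Tx)$ tends to $0$ along $\lim_n$ for $T$ in a $\|\cdot\|_2$-dense $*$-subalgebra, and only afterwards invoke $\lim_n$. A secondary technical point is the legitimacy of the cutdown to a projection: since $\varphi_{|M}$ corresponds to the possibly non-projection element $p=\lim_n p_n$, one must run the usual argument (as in \cite[Lemma 2.7]{DHI16}, \cite[Section 2.2]{OP07}) showing that relative amenability of ``$P$ with respect to the state $\tau(\cdot\,p)$'' upgrades to relative amenability of $P$ times the range projection of $p$.
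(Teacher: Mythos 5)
Your argument follows the same path as the paper's proof of Lemma~\ref{union}: extend each $\varphi_n$ to $\langle M,e_Q\rangle$ by compressing with $p_n$, take the limit along $\lim_n$, establish $P$-centrality by inserting $E_{P_n}(\cdot)$ and combining $\|x-E_{P_n}(x)\|_2\to 0$ with the Cauchy--Schwarz bound and the $P_np_n$-centrality of $\varphi_n$, and then pass to a support projection via the faithfulness criterion of \cite[Theorem 2.1]{OP07}. The only deviations are cosmetic: the paper normalizes by $\tau(p_n)$ at the outset so that $\varphi$ is a state with $\varphi|_M\le c^{-1}\tau$ (making the Radon--Nikodym/support-projection step immediate), and it applies the $P_np_n$-centrality of $\varphi_n$ directly to the element $E_{P_n}(x)\in P_n$ rather than to unitaries, so the unitary-approximation bookkeeping you flag as the main obstacle does not in fact arise.
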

\begin{proof}
We may clearly assume that $c:=\lim_n\tau(p_n)>0$ and $\tau(p_n)>0$, for every $n$.
For  every $n$, let $\varphi_n:p_n\langle M,e_Q\rangle p_n\rightarrow\mathbb C$ be a $P_np_n$-central positive linear functional such that ${\varphi_n}_{|p_nMp_n}=\tau$.
 The Cauchy-Schwarz inequality implies that $|\varphi_n(p_nTxp_n)|\leq \sqrt{\varphi_n(p_nTT^*p_n)\varphi_n(p_nx^*xp_n)}\leq \|T\|\|x\|_2$, and similarly that $|\varphi_n(p_nxTp_n)|\leq \|T\|\|x\|_2$, for all $x\in M$, $T\in \langle M,e_Q\rangle$.

We define a state $\varphi:\langle M,e_Q\rangle\rightarrow\mathbb C$ by letting 
$$\text{$\varphi(T)=\lim_n\frac{\varphi_n(p_nTp_n)}{\tau(p_n)}$, for every $T\in\langle M,e_Q\rangle$.}$$

We claim that $\varphi$ is $P$-central. To this end, let $x\in P$, $T\in\langle M,e_Q\rangle$ and $n\in I$. Since $\varphi_n$ is $P_np_n$-central,  $\varphi_n(p_nTE_{P_n}(x)p_n)=\varphi_n(p_nE_{P_n}(x)Tp_n)$ and thus \begin{align*}|\varphi_n(p_nTxp_n)-\varphi_n(p_nxTp_n)|&\leq |\varphi_n(p_nT(x-E_{P_n}(x))p_n)|+|\varphi_n(p_n(x-E_{P_n}(x))Tp_n)|\\&\leq2\|T\|\|x-E_{P_n}(x)\|_2.  \end{align*}
Since $\|x-E_{P_n}(x)\|_2\rightarrow 0$ and $\lim_n\tau(p_n)>0$, we get that $\varphi(Tx)=\varphi(xT)$, and the claim is proven.

Finally, note that $\varphi_{|M}\leq \frac{1}{c}\tau$. Thus, we can find $y\in P'\cap M$ such that $0\leq y\leq\frac{1}{c}$ and $\varphi(x)=\tau(xy)$, for all $x\in M$. Let $p\in P'\cap M$ be the support projection of $y$. Then $y\leq \frac{1}{c}p$, hence $\tau(p)\geq c\tau(y)=c$. Since the restriction of $\varphi$ to $p(P'\cap M)p$ is faithful, \cite[Theorem 2.1]{OP07} implies that $Pp$ is amenable relative to $Q$ inside $M$, which finishes the proof.
\end{proof}

\begin{corollary}\label{amenable}
Let $(M,\tau)$ and $(N,\tau')$ be tracial von Neumann algebras. Assume that there exists a net of von Neumann subalgebras $P_n\subset M$, $n\in I$, and trace preserving $*$-homomorphisms $\pi_n:N\rightarrow M$ such that $\|\pi_n(x)-E_{P_n}(\pi_n(x))\|_2\rightarrow 0$, for every $x\in N$. For $n\in I$, let $p_n\in P_n'\cap M$ be a projection such that $P_np_n$ is amenable. Then there is a projection $z\in\mathcal Z(N)$ such that $Nz$ is amenable and $\tau(z)\geq\lim_n\tau(p_n)$. In particular, if $P_n$ is amenable for every $n$, then $N$ is amenable.
\end{corollary}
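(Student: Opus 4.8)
The plan is to deduce this from Lemma \ref{union} by taking $Q = \mathbb{C}1$ (so that ``amenable relative to $Q$ inside $M$'' just means amenable, since $\langle M, e_{\mathbb{C}1}\rangle = \mathbb{B}(L^2(M))$). The maps $\pi_n$ are only defined on $N$, not on $M$, so the first step is to transport everything into $N$ via an ultraproduct. Concretely, form the $*$-homomorphism $\pi = (\pi_n)_{n\to\omega}\colon N \to M^{\omega}$ for a suitable choice of $\omega$ (or, since $I$ may not be $\mathbb{N}$, work directly with the state $\lim_n$ and the tracial von Neumann algebra $\prod_n M$ over $\ell^\infty(I)$; I will phrase it with $M^{\omega}$ for a fixed ultrafilter refining $\lim_n$, which suffices once we note the inequality $\tau(z) \ge \lim_n \tau(p_n)$ can be recovered by choosing $\omega$ so that $\lim_{n\to\omega}\tau(p_n) = \lim_n \tau(p_n)$). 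The hypothesis $\|\pi_n(x) - E_{P_n}(\pi_n(x))\|_2 \to 0$ says exactly that $\pi(N) \subset \prod_\omega P_n$.

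Next I would set $\tilde P := \prod_\omega P_n \subset M^\omega$ and let $\tilde p := (p_n)_n \in M^\omega$. Since $p_n \in P_n' \cap M$, we have $\tilde p \in \tilde P' \cap M^\omega$, and $\tilde p$ is a projection with $\tau_\omega(\tilde p) = \lim_{n\to\omega}\tau(p_n)$. The point of the hypothesis ``$P_n p_n$ is amenable'' is that it gives, for each $n$, an $M$-central (indeed $\mathbb{B}(L^2(M))$-valued) state witnessing amenability of $P_n p_n$; assembling these along $\omega$ shows $\tilde P \tilde p$ is amenable relative to $\mathbb{C}1$ inside $M^\omega$ — this is precisely the ultraproduct form of Lemma \ref{union} with $Q = \mathbb{C}1$, or one can invoke Lemma \ref{union} directly for the net $P_n \subset M^\omega$ (noting $\|E_{P_n}(y) - y\|_2 \to 0$ for $y \in \pi(N)$ need only be checked on the dense subalgebra, then extended). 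This produces a projection $q \in \pi(N)' \cap M^\omega$ with $\tau_\omega(q) \ge \lim_n \tau(p_n)$ such that $\pi(N) q$ is amenable.

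The final step is to push $q$ down to a central projection of $N$ and conclude amenability of $Nz$. Since $\pi\colon N \to M^\omega$ is trace-preserving, it is injective and $N \cong \pi(N)$; amenability of $\pi(N) q$ transfers to amenability of a corner $N z$ where $z \in \mathcal{Z}(N)$ is the central support (in $\pi(N)$, pulled back to $N$) of $q$ — more carefully, cutting $\pi(N)$ by the central support of $q$ and using that a von Neumann algebra with a nonzero amenable corner of full central support is amenable (standard: a direct integral / comparison argument, or observe $\pi(N)z \prec \pi(N)q$ in the sense that $\pi(N)z$ embeds into the amenable algebra $\pi(N)q$ since they have the same central support, and amenability passes to subalgebras up to amplification). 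Then $\tau(z) = \tau_\omega(z) \ge \tau_\omega(q) \ge \lim_n \tau(p_n)$. The ``in particular'' statement is immediate: if each $P_n$ is amenable take $p_n = 1$, get $z$ with $\tau(z) \ge 1$, hence $z = 1$ and $N$ is amenable.

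I expect the main obstacle to be the bookkeeping around the net versus ultrafilter and the transfer of the central projection: making precise that an amenable corner $\pi(N)q$ of full central support forces amenability of the cut-down $\pi(N)z$, and that the trace bound survives. Neither is deep — the first is Connes-type stability of amenability under the relevant operations, the second is just $\tau_\omega$ bookkeeping — but it is where care is needed. Everything amenability-theoretic is already packaged in Lemma \ref{union} and \cite[Theorem 2.1]{OP07}.
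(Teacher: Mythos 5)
There is a genuine gap. You correctly reduce to Lemma \ref{union} with $Q=\mathbb{C}1$, but your substitute ambient algebra $M^{\omega}$ does not do the job, and the device the paper actually uses is missing: the amalgamated free product $\tilde M=*_{N,n\in I}M_n$, where $M_n=M$ and $N\hookrightarrow M_n$ via $\pi_n$. The point of the amalgamation is to identify all the moving images $\pi_n(N)$ with a single copy of $N\subset\tilde M$; with that done, $E_{P_n}^{\tilde M}|_N=E_{P_n}^{M}\circ\pi_n$, so the Corollary's hypothesis becomes literally the hypothesis $\|E_{P_n}(x)-x\|_2\to0$ of Lemma \ref{union}, and the conclusion is immediate (the final cut-down by the support of $E_{\mathcal Z(N)}(p)$ you do have right). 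Your ultrapower replacement fails on both prongs you offer. First, the claim that $\tilde P\tilde p=\prod_{\omega}(P_np_n)$ is amenable (equivalently, amenable relative to $\mathbb{C}1$ inside $M^{\omega}$) is simply false: ultraproducts of amenable tracial von Neumann algebras are not amenable in general --- for instance $\prod_{\omega}\mathbb M_n(\mathbb C)$ contains $L(\mathbb F_2)$, and amenability passes to subalgebras --- so ``assembling the $\varphi_n$ along $\omega$'' cannot produce a $\tilde P\tilde p$-central state on $\mathbb B(L^2(M^{\omega}))$ (nor is there any natural map $\mathbb B(L^2(M))\to\mathbb B(L^2(M^{\omega}))$ along which to transport the $\varphi_n$, since $L^2(M^{\omega})$ is not an ultraproduct of $L^2(M)$'s).

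Second, the fallback of applying Lemma \ref{union} inside $M^{\omega}$ with $P=\pi(N)$ and the \emph{diagonally embedded} net $P_n\subset M\subset M^{\omega}$ also breaks: the hypothesis of Lemma \ref{union} then reads $\|E_{P_n}^{M^{\omega}}(\pi(x))-\pi(x)\|_2\to0$, and $E_{P_n}^{M^{\omega}}(\pi(x))$ is the $\sigma$-weak limit along $\omega$ of $E_{P_n}^{M}(\pi_m(x))$ as $m$ varies with $n$ fixed. The Corollary only controls the diagonal term $m=n$; it says nothing about $E_{P_n}(\pi_m(x))$ for $m\neq n$. Concretely, take $M=L^{\infty}[0,1]$, $N=\mathbb C^2$, $P_n$ the algebra of dyadic step functions at scale $2^{-n}$, and $\pi_n$ mapping the two atoms of $N$ to $\bigcup_{k \text{ even}}[k2^{-n},(k+1)2^{-n}]$ and its complement; then $\pi_n(N)\subset P_n$ so the Corollary's hypothesis holds exactly, yet $E_{P_n}^{M^{\omega}}(\pi(1_{\{0\}}))=\tfrac12\cdot1$ for all $n$ and $\|E_{P_n}^{M^{\omega}}(\pi(x))-\pi(x)\|_2\not\to0$. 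So your fallback hypothesis fails even though the Corollary's conclusion is trivially true in this example. The remedy is precisely the amalgamated free product: it is the mechanism that converts ``a net of embeddings $\pi_n:N\to M$ each close to $P_n$'' into ``a single $N$ inside one tracial von Neumann algebra with a net $P_n$ converging to it,'' which is what Lemma \ref{union} needs.
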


{\it Proof.}
For every $n$, let $M_n=M$ and view $P_n$ and $N$  as subalgebras of $M_n$, via the identity map and $\pi_n$, respectively. If  we put $\tilde M=*_{N,n\in I}M_n$, then we have $\|E_{P_n}(x)-x\|_2\rightarrow 0$, for every $x\in N$.   Since $P_np_n$ is amenable for every $n$, Lemma \ref{union} implies the existence of a projection $p\in N'\cap \tilde M$ such that $Np$ is amenable and $\tau(p)\geq\lim_n\tau(p_n)$. Thus, if $z$ is the support projection of $E_{\mathcal Z(N)}(p)$, then $Nz$ is amenable. Since $z\geq p$, we  have that $\tau(z)\geq\tau(p)$, which finishes the proof.
\hfill$\blacksquare$

The next Lemma, which appears to be of independent interest, provides general conditions which guarantee that if $P$ is amenable relative to a decreasing net of subalgebras $Q_n$, then $P$ is amenable relative to their intersection, $\cap_nQ_n$. More generally, we have:

\begin{lemma}\label{generalamen}
Let $(M,\tau)$ be a tracial von Neumann algebra and $Q\subset M$ a von Neumann subalgebra. Assume that there exist nets of von Neumann subalgebras $Q_n,M_n\subset M$ such that \begin{enumerate}
\item $Q\subset M_n\cap Q_n$ and $_{Q_n}L^2(M)_{M_n}\subset_{\text{weak}}
{_{Q_n}L^2(Q_n)}\otimes_{Q} L^2(M_n)_{M_n}$, for every $n$,
\item $\|x-E_{M_n}(x)\|_2\rightarrow 0$, for every $x\in M$.
\end{enumerate}
If $P\subset M$ is a von Neumann subalgebra which is amenable relative to $Q_n$ inside $M$, for every $n$, then $P$ is amenable relative to $Q$ inside $M$.
\end{lemma}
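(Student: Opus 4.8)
The plan is to translate everything into weak containment of Hilbert bimodules and then chain the hypotheses together via Connes fusion. Recall that for von Neumann subalgebras $R,S\subset M$, $R$ is amenable relative to $S$ inside $M$ if and only if ${}_{R}L^2(M)_{M}\subset_{\text{weak}}{}_{R}L^2(M)\otimes_{S}L^2(M)_{M}$ as $R$-$M$-bimodules; this is the relative version of the bimodule criterion for amenability recalled above (cf.\ \cite[Theorem 2.1]{OP07}), using the identification ${}_{M}L^2\langle M,e_{S}\rangle_{M}\cong{}_{M}L^2(M)\otimes_{S}L^2(M)_{M}$. Since by hypothesis ${}_{P}L^2(M)_{M}\subset_{\text{weak}}{}_{P}L^2(M)\otimes_{Q_n}L^2(M)_{M}$ for every $n$, the goal is to upgrade this to ${}_{P}L^2(M)_{M}\subset_{\text{weak}}{}_{P}L^2(M)\otimes_{Q}L^2(M)_{M}$.

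First I would exploit condition (1). Fix $n$. Tensoring the weak containment ${}_{Q_n}L^2(M)_{M_n}\subset_{\text{weak}}{}_{Q_n}L^2(Q_n)\otimes_{Q}L^2(M_n)_{M_n}$ on the left by ${}_{M}L^2(M)_{Q_n}$ and on the right by ${}_{M_n}L^2(M)_{M}$ — Connes fusion preserves weak containment in each variable — and then simplifying with the standard isomorphisms $\mathcal H\otimes_{N}L^2(N)\cong\mathcal H$ and $L^2(N)\otimes_{N}\mathcal K\cong\mathcal K$ (applied with $N=Q_n$, resp.\ $N=M_n$), which is precisely where the inclusions $Q\subset Q_n$ and $Q\subset M_n$ from (1) are used, one obtains
\[
{}_{M}\bigl(L^2(M)\otimes_{Q_n}L^2(M)\otimes_{M_n}L^2(M)\bigr)_{M}\ \subset_{\text{weak}}\ {}_{M}\bigl(L^2(M)\otimes_{Q}L^2(M)\bigr)_{M}.
\]
On the other hand, restricting the right action in ${}_{P}L^2(M)_{M}\subset_{\text{weak}}{}_{P}L^2(M)\otimes_{Q_n}L^2(M)_{M}$ to $M_n$ and fusing on the right with ${}_{M_n}L^2(M)_{M}$ gives
\[
{}_{P}\bigl(L^2(M)\otimes_{M_n}L^2(M)\bigr)_{M}\ \subset_{\text{weak}}\ {}_{P}\bigl(L^2(M)\otimes_{Q_n}L^2(M)\otimes_{M_n}L^2(M)\bigr)_{M}.
\]
Restricting the first display to ${}_{P}(\cdot)_{M}$ and combining the two by transitivity of weak containment yields, for every $n$,
\[
{}_{P}\bigl(L^2(M)\otimes_{M_n}L^2(M)\bigr)_{M}\ \subset_{\text{weak}}\ {}_{P}\bigl(L^2(M)\otimes_{Q}L^2(M)\bigr)_{M}.\qquad(\star_n)
\]

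It then remains to pass to the limit using condition (2). Identify ${}_{M}L^2(M)\otimes_{M_n}L^2(M)_{M}$ with ${}_{M}L^2\langle M,e_{M_n}\rangle_{M}$: the Jones projection $e_{M_n}$ is a unit vector therein with $\langle x\,e_{M_n}\,y,e_{M_n}\rangle=\tau(E_{M_n}(x)y)$ for all $x,y\in M$. By (2) these matrix coefficients converge along the net to $\tau(xy)$, which are precisely the matrix coefficients of the canonical cyclic vector $\widehat 1$ of the trivial bimodule ${}_{M}L^2(M)_{M}$. Hence the positive functional $a\otimes b^{\text{op}}\mapsto\tau(ab)$ on $M\odot M^{\text{op}}$ is a pointwise limit of the unit-vector functionals coming from the $L^2\langle M,e_{M_n}\rangle$, so it is dominated on positive elements by the supremum of the corresponding $C^*$-seminorms; this gives ${}_{M}L^2(M)_{M}\subset_{\text{weak}}\bigoplus_{n}{}_{M}\bigl(L^2(M)\otimes_{M_n}L^2(M)\bigr)_{M}$. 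Restricting to ${}_{P}(\cdot)_{M}$ and feeding each summand into $(\star_n)$ — a direct sum of bimodules each weakly contained in a fixed bimodule $\mathcal N$ is weakly contained in $\mathcal N$ — one concludes ${}_{P}L^2(M)_{M}\subset_{\text{weak}}{}_{P}L^2(M)\otimes_{Q}L^2(M)_{M}$, i.e.\ $P$ is amenable relative to $Q$ inside $M$.

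I expect the main obstacle to be purely organisational: keeping careful track of which algebra acts on which side through the three-fold fusions, and verifying that the hypotheses $Q\subset M_n\cap Q_n$ are exactly what make $L^2(M)\otimes_{Q_n}L^2(Q_n)\otimes_{Q}L^2(M_n)\otimes_{M_n}L^2(M)\cong L^2(M)\otimes_{Q}L^2(M)$ go through with the correct bimodule structure. The only conceptually non-formal step is the limiting argument from (2), but it is a routine weak-containment computation once the matrix coefficients of $e_{M_n}$ are written down. As a safety measure one could instead run the entire argument directly in terms of $P$-central states on $\langle M,e_{Q_n}\rangle$ and $\langle M,e_{Q}\rangle$, which is equivalent and parallels the proof of Lemma~\ref{union}.
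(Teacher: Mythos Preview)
Your argument is correct and is essentially the paper's own proof, carried out in the same bimodule/Connes-fusion language: the paper also derives, for each $n$, the containment ${}_{P}L^2(M)\otimes_{M_n}L^2(M)_{M}\subset_{\text{weak}}{}_{P}L^2(M)\otimes_{Q}L^2(M)_{M}$ by chaining relative amenability with hypothesis~(1), and then uses hypothesis~(2) exactly as you do to obtain ${}_{P}L^2(M)_{M}\subset_{\text{weak}}\bigoplus_n{}_{P}L^2(M)\otimes_{M_n}L^2(M)_{M}$. The only cosmetic difference is the order in which the fusions are performed: the paper first substitutes $L^2(Q_n)\otimes_Q L^2(M_n)$ into ${}_{P}L^2(M)\otimes_{Q_n}L^2(M)_{M_n}$ and then fuses with ${}_{M_n}L^2(M)_M$, whereas you tensor condition~(1) on both sides at once and then combine; the resulting $(\star_n)$ and the limiting step are identical.
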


Lemma \ref{generalamen} applies in particular if there exists $u_n\in\mathcal U(M)$ such that $u_nPu_n^*\subset Q_n$, or, more generally, if $P\prec_{M}^{s}Q_n$, for every $n$. Indeed, by \cite[Lemma 2.6(3)]{DHI16}, the latter condition implies that $P$ is amenable relative to $Q_n$ inside $M$.

{\it Proof.} 
Assume that $P$ is amenable relative to $Q_n$, for every $n$. Then \cite[Theorem~2.1]{OP07} gives that
$_PL^2(M)_M\subset_{\text{weak}}{_PL^2(M)}\otimes_{Q_n}L^2(M)_M$, and thus $_PL^2(M)_{M_n}\subset_{\text{weak}}{_{P}L^2(M)}\otimes_{Q_n}L^2(M)_{M_n}$, for every $n$.
Since $_{Q_n}L^2(M)_{M_n}\subset_{\text{weak}}{_{Q_n}L^2(Q_n)}\otimes_Q L^2(M_n)_{M_n}$, we further get that $_PL^2(M)_{M_n}\subset_{\text{weak}}{_P}L^2(M)\otimes_Q L^2(M_n)_{M_n}$, and thus \begin{align*}_{P}L^2(M)\otimes_{M_n}L^2(M)_M&\subset_{\text{weak}}{_PL^2(M)}\otimes_Q L^2(M_n)\otimes_{M_n}L^2(M)_M\\&= {_{P}L^2(M)}\otimes_Q L^2(M)_M,\;\;\text{for every $n$}.\end{align*}
On the other hand, since  $\|x-E_{M_n}(x)\|_2\rightarrow 0$, for every $x\in M$, we have $$_PL^2(M)_M\subset_{\text{weak}}{\bigoplus_n}\, {_PL^2(M)}\otimes_{M_n}L^2(M)_M.$$
By combining the last two displayed inclusions, we get that $_PL^2(M)_M\subset_\text{weak}{_PL^2(M)}\otimes_Q L^2(M)_M$, and therefore $P$ is amenable relative to $Q$ inside $M$.
\hfill$\blacksquare$

\begin{remark} Several weaker versions of particular cases of Lemma \ref{generalamen} have been observed before. Indeed,
conditions (1) and (2) from Lemma \ref{generalamen} are satisfied in the two following cases:
\begin{enumerate}[label=(\alph*)]
\item $M=*_{Q,k\in\mathbb N}M_k$ is an amalgamated free product of tracial von Neumann algebras $(M_k)_{k\in\mathbb N}$ over a common subalgebra $Q$, $Q_n=*_{Q,k\geq n}M_k$ and $M_n=*_{Q,k<n}M_k$.
\item $M=(\bar{\otimes}_{k\in\mathbb N}M_k)\bar{\otimes}Q$ is an infinite tensor product of tracial von Neumann algebras $(M_k)_{k\in\mathbb N}$ and $Q$, $Q_n=(\bar{\otimes}_{k\geq n}M_k)\bar{\otimes}Q$ and $M_n=(\bar{\otimes}_{k<n}M_k)\bar{\otimes}Q$.
\end{enumerate}

Lemma \ref{generalamen} was first noticed by the first author in case (a) under the assumption that $P$ can be unitarily conjugated into $Q_n$, and extended in \cite[Proposition 4.2]{HU15} to cover the more general assumption that $P\prec_M^{s}Q_n$. When $Q=\mathbb C1$, the latter result was also noticed by R. Boutonnet and S. Vaes (personal communication), whose proof inspired our Lemma \ref{generalamen}.
In case (b), weaker versions of Lemma \ref{generalamen} were obtained in \cite[Lemma 4.4]{Is16} and \cite[Proposition 2.7]{CU18}.
\end{remark}

\subsection{Malleable deformations} In \cite{Po01,Po03}, Popa introduced the notion of an {\it s-malleable deformation} of a von Neumann algebra. In combination with his powerful {\it deformation/rigidity} techniques, this notion has led to remarkable progress in the theory of von Neumann algebras (see, e.g., \cite{Po07,Va10a,Io18}). S-malleable deformations will also play an important role in this paper.

\begin{definition}\label{def:smal}
	Let $(M,\tau)$ be a tracial von Neumann algebra. We say that a triple $(\tilde M,(\alpha_t)_{t\in\mathbb R},\beta)$ is an {\it s-malleable deformation} of $M$ if the following conditions hold:  \begin{enumerate}\item  $(\tilde M,\tilde\tau)$ is a tracial von Neumann algebra such that $\tilde M\supset M$ and $\tilde\tau_{|M}=\tau$,
		\item $(\alpha_t)_{t\in\mathbb R}\subset\text{Aut}(\tilde M,\tilde\tau)$ is a $1$-parameter group with $\lim_{t\rightarrow 0}\|\alpha_t(x)-x\|_2=0$, for all $x\in \tilde M$.
		\item $\beta\in\text{Aut}(\tilde M,\tilde\tau)$ satisfies $\beta^2=\text{Id}_{\tilde M}$, $\beta\alpha_t\beta^{-1}=\alpha_{-t}$ for all $t\in\mathbb R$, and $\beta(x)=x$, for all $x\in M$.
	\end{enumerate}
\end{definition}

As established in \cite{Po06a}, s-malleable deformations have the following ``transversality" property:

\begin{lemma}[\!\!{\cite[Lemma~2.1]{Po06a}}]\label{trans}
	For any $x\in M$ and $t\in\R$ we have
	\[
	\norm{x-\alpha_{2t}(x)}_2\leq 2\norm{\alpha_t(x) - E_M(\alpha_t(x))}_2.
	\]	
\end{lemma}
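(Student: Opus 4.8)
\textbf{Plan of proof for Lemma \ref{trans} (transversality).}

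The plan is to reduce the inequality to a direct Hilbert-space computation in $L^2(\tilde M)$, exploiting the symmetry automorphism $\beta$. First I would set, for $x\in M$, the vectors $\xi=\alpha_t(x)$ and $\eta=\alpha_{-t}(x)=\beta\alpha_t\beta^{-1}(x)=\beta(\alpha_t(x))=\beta(\xi)$, using that $\beta(x)=x$ and $\beta\alpha_t\beta^{-1}=\alpha_{-t}$. Since $\beta$ is a trace-preserving automorphism it acts as a unitary (indeed a self-adjoint involution) on $L^2(\tilde M)$, and it fixes $L^2(M)$ pointwise; in particular $\beta$ restricted to $L^2(M)$ is the identity, so $E_M\circ\beta=E_M$ and $\beta\circ E_M=E_M$ on $L^2(\tilde M)$. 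The key elementary observations I would record are: (i) $\|\xi-\eta\|_2=\|\alpha_t(x)-\alpha_{-t}(x)\|_2$, (ii) applying the trace-preserving automorphism $\alpha_t$ to the left-hand side, $\|x-\alpha_{2t}(x)\|_2=\|\alpha_{-t}(x)-\alpha_t(x)\|_2=\|\xi-\eta\|_2$ as well, so the LHS of the desired inequality equals $\|\xi-\eta\|_2$.

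Next I would bound $\|\xi-\eta\|_2$ using the midpoint $E_M(\xi)$. Write $\xi-\eta=(\xi-E_M(\xi))-(\eta-E_M(\eta))$, using $E_M(\xi)=E_M(\beta\xi)=E_M(\xi)$... more precisely $E_M(\eta)=E_M(\beta\xi)=E_M(\xi)$, so the two ``centered'' parts are $\xi_0:=\xi-E_M(\xi)$ and $\eta_0:=\eta-E_M(\xi)=\beta(\xi)-E_M(\xi)=\beta(\xi-E_M(\xi))=\beta(\xi_0)$, where the last step uses $\beta E_M=E_M$ and $\beta$ linear. Therefore $\xi-\eta=\xi_0-\beta(\xi_0)$, and by the triangle inequality together with $\|\beta(\xi_0)\|_2=\|\xi_0\|_2$ (as $\beta$ is isometric on $L^2$), we get $\|\xi-\eta\|_2\le 2\|\xi_0\|_2=2\|\alpha_t(x)-E_M(\alpha_t(x))\|_2$, which is exactly the claimed bound. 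Combining with the first paragraph gives $\|x-\alpha_{2t}(x)\|_2=\|\xi-\eta\|_2\le 2\|\alpha_t(x)-E_M(\alpha_t(x))\|_2$.

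The only slightly delicate points, rather than genuine obstacles, are bookkeeping ones: making sure $\alpha_{2t}=\alpha_t\circ\alpha_t$ (true since $(\alpha_t)$ is a one-parameter group) so that $\alpha_t(\alpha_{-t}(x))=x$ and $\alpha_t(\alpha_t(x))=\alpha_{2t}(x)$ legitimize step (ii); and verifying the commutation identities $E_M\beta=\beta E_M=E_M$ and $E_M\alpha_{-t}=E_M\beta\alpha_t\beta$, which follow from $\beta|_M=\mathrm{id}$ together with $\beta$ being a trace-preserving automorphism fixing $M$ globally (hence $\beta(M)=M$, so $E_M$ intertwines appropriately). I expect no real difficulty here; the proof is a two-line Hilbert space estimate once the symmetry is set up correctly, which is presumably why \cite{Po06a} states it as a short lemma. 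If one wanted to avoid passing to $L^2$, the same argument goes through verbatim with $\|\cdot\|_2$-norms of elements of $\tilde M$, since all the maps involved ($\alpha_t$, $\beta$, $E_M$) are $\|\cdot\|_2$-contractive and the relevant identities hold at the algebra level.
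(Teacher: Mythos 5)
Your proof is correct and reproduces the standard argument of Popa's transversality lemma essentially verbatim: apply the isometry $\alpha_{-t}$ to rewrite the left side as $\|\alpha_t(x)-\alpha_{-t}(x)\|_2$, identify $\alpha_{-t}(x)=\beta(\alpha_t(x))$ via $\beta\alpha_t\beta^{-1}=\alpha_{-t}$ and $\beta|_M=\mathrm{id}$, observe that $E_M\circ\beta=E_M$ and $\beta\circ E_M=E_M$ (so the $E_M$-components cancel), and finish with the triangle inequality and $\beta$ being isometric. The paper does not give its own proof — it cites \cite[Lemma~2.1]{Po06a} — and your argument is the one given there, so there is nothing further to compare.
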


\subsection{Gaussian and free Bogoljubov actions}\label{gauss} We next discuss two kinds of actions that will play a crucial role in this paper, Gaussian and free Bogoljubov actions. 
Below we describe one possible construction of these actions, following \cite{PS09} and \cite{VDN92}. For further properties of Gaussian and free Bogoljubov actions, we refer the reader to \cite{Bo14} and \cite{Ho12b}, respectively.

For the remainder of the preliminaries, we fix an orthogonal representation $\pi:\Gamma\rightarrow \cO(H_\R)$ of a countable group $\Gamma$ on a real Hilbert space $H_\R$. Let $H=H_\R\otimes_\R \C$ be the complexified Hilbert space, $H^{\otimes n}$ its $n^{th}$ tensor power, and $H^{\odot n}$ its symmetric $n^{th}$ tensor power. The latter is the closed subspace of $H^{\otimes n}$ spanned by vectors of the form
\[
\xi_1\odot\dots\odot\xi_n := \frac{1}{n!} \sum_{\sigma\in S_n} \xi_{\sigma(1)}\otimes\dots\otimes\xi_{\sigma(n)},
\]
with the inner product normalized such that $\norm{\xi}^2_{H^{\odot n}} = n! \norm{\xi}^2_{H^{\otimes n}}$. We then consider the {\it symmetric Fock space}
\[
\cS(H) := \C\Omega \oplus \bigoplus_{n\geq 1} H^{\odot n},
\]
where the unit vector $\Omega$ is the so-called {\it vacuum vector}. Any vector $\xi\in H$ gives rise to an unbounded operator $\ell_\xi$ on $\cS(H)$, the so-called {\it left creation operator}, defined by
\[
\ell_\xi(\Omega)=\xi, \text{ and }\, \ell_\xi(\xi_1\odot\dots\odot\xi_n) = \xi\odot\xi_1\odot\dots\odot\xi_n.
\]
Denoting $s(\xi)=\ell_\xi+\ell_\xi^*$, one checks that the operators $\{s(\xi)\}_{\xi\in H}$ commute. Moreover, one can show (\!\!\cite{PS09}) that with respect to the vacuum state $\langle\cdot\Omega,\Omega\rangle$, they can be regarded as independent random variables with Gaussian distribution $\cN(0,\norm{\xi}^2)$.

Consider the abelian von Neumann algebra $A_{\pi}\subset B(\cS(H))$ generated by all operators of the form
\[
\omega(\xi_1,\dots,\xi_n) := \exp(i\pi s(\xi_1)\dots s(\xi_n)),
\]
together with the trace $\tau=\langle \cdot\Omega,\Omega\rangle$. Any orthogonal operator $T\in\cO(H_\R)$ can also be viewed as a unitary operator on its complexification $H$, and gives rise to a unitary operator on $\cS(H)$, which we will still denote by $T$, defined by
\[
T(\Omega)=\Omega, \text{ and }\, T(\xi_1\odot\dots\odot\xi_n) = (T\xi_1)\odot\dots\odot(T\xi_n).
\]
One then checks that $T\omega(\xi_1,\dots,\xi_n)T^* = \omega(T\xi_1,\dots,T\xi_n)$, hence $T$ normalizes $A_{\pi}$. Since $T(\Omega)=\Omega$, $\Ad(T)$ is a trace preserving automorphism of $A_{\pi}$.

\begin{definition}
	The {\it Gaussian action} associated to $\pi$ is the action $\sigma=\sigma_\pi: \Gamma\actson (A_{\pi},\tau)$ defined by $\sigma_g=\Ad(\pi(g))$, for every $g\in\Gamma$.
\end{definition}

One can easily check that the unitaries $\omega(\xi)$ satisfy the properties $\omega(0)=1$, $\omega(\xi+\eta)=\omega(\xi)\omega(\eta)$, $\tau(\omega(\xi)) = \exp(-\norm{\xi}^2)$, and $\sigma_g(\omega(\xi)) = \omega(\pi(g)\xi)$ for all $\xi,\eta\in H, g\in\Gamma$. This in fact gives an equivalent description of the Gaussian action (see \cite{Va10b}).

The free Bogoljubov action arises in a similar way using the {\it full Fock space}
\[
\cF(H) := \C\Omega \oplus \bigoplus_{n\geq 1} H^{\otimes n}.
\]
We consider the {\it left creation operator} $L_\xi$ associated to $\xi\in H$ defined by
\[
L_\xi(\Omega)=\xi, \text{ and }\, L_\xi(\xi_1\otimes\dots\otimes\xi_n) = \xi\otimes\xi_1\otimes\dots\otimes\xi_n.
\]
Putting $W(\xi)=L_\xi + L_\xi^*$, one can show (\!\!\cite{VDN92}) that the distribution of the self-adjoint operator $W(\xi)$ with respect to the vacuum state $\langle\cdot\Omega,\Omega\rangle$ is the semicircular law supported on $[-2\norm{\xi},2\norm{\xi}]$, and that for any orthogonal set of vectors from $H_\R$, the associated family of operators is freely independent with respect to $\langle\cdot\Omega,\Omega\rangle$. 

Denote by $\Gamma(H_\R)''$ the von Neumann algebra generated by $\{W(\xi)\mid \xi\in H_\R\}$. Then $\Gamma(H_\R)''$ is isomorphic to the free group factor $L(\bF_{\dim(H_\R)})$. Moreover, $\tau=\langle\cdot\Omega,\Omega\rangle$ is a normal faithful trace on $\Gamma(H_\R)''$. As for the symmetric Fock space, any operator $T\in\cO(H_\R)$ induces an operator $T\in\cU(\cF(H))$, satisfying $\Ad(T)(W(\xi))=W(T\xi)$.

\begin{definition}
	The {\it free Bogoljubov action} associated to $\pi$ is the action $\rho=\rho_\pi: \Gamma\actson (\Gamma(H_\R)'',\tau)$ defined by $\rho_g=\Ad(\pi(g))$, for every $g\in\Gamma$.
\end{definition}

Since $\overline{\Gamma(H_{\mathbb R})''\Omega}=\cF(H)$, the Koopman representation associated to $\rho$ of $\Gamma$ on $L^2(\Gamma(H_{\mathbb R})'')$ is isomorphic to the representation of $\Gamma$ on $\cF(H)$. This implies the following fact which will be needed later on:

\begin{lemma}\label{weak} Denote by $\rho_0$ the restriction of the Koopman representation of $\rho$ to $L^2(\Gamma(H_{\mathbb R})'')\ominus\mathbb C1$. If $\pi^{\otimes k}$ is weakly contained in the left regular representation of $\Gamma$, for some $k\in\mathbb N$, then 
$\rho_0^{\otimes k}$ is weakly contained in the left regular representation of $\Gamma$.
\end{lemma}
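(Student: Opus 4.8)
The plan is to reduce everything to the level of the Koopman representation on the full Fock space and then use standard facts about weak containment under tensor products. As noted in the excerpt, since $\overline{\Gamma(H_\R)''\Omega}=\cF(H)$, the Koopman representation of $\rho$ on $L^2(\Gamma(H_\R)'')$ is unitarily equivalent to the representation of $\Gamma$ on $\cF(H)=\C\Omega\oplus\bigoplus_{n\geq1}H^{\otimes n}$, where $\Gamma$ acts via $\pi^{\otimes n}$ on the $n$-th summand. Stripping off the vacuum, $\rho_0$ is therefore equivalent to $\bigoplus_{n\geq 1}\pi^{\otimes n}$. The hypothesis is that $\pi^{\otimes k}$ is weakly contained in the left regular representation $\lambda_\Gamma$ for some $k\in\N$.

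First I would record the elementary functorial properties of weak containment that I need: (i) if $\sigma\emb\lambda_\Gamma$ then $\sigma\otimes\eta\emb\lambda_\Gamma$ for any representation $\eta$ (since $\lambda_\Gamma\otimes\eta\cong\lambda_\Gamma^{\oplus\dim\eta}\emb\lambda_\Gamma$), and (ii) weak containment in $\lambda_\Gamma$ is preserved under taking subrepresentations and arbitrary direct sums. Now $\rho_0^{\otimes k}\cong\big(\bigoplus_{n\geq1}\pi^{\otimes n}\big)^{\otimes k}$ decomposes as a direct sum, over all $k$-tuples $(n_1,\dots,n_k)$ with each $n_i\geq1$, of the representations $\pi^{\otimes n_1}\otimes\cdots\otimes\pi^{\otimes n_k}\cong\pi^{\otimes(n_1+\cdots+n_k)}$. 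Since each $n_i\geq 1$, we have $N:=n_1+\cdots+n_k\geq k$, so each such summand is of the form $\pi^{\otimes k}\otimes\pi^{\otimes(N-k)}$. By the hypothesis and property (i), $\pi^{\otimes k}\otimes\pi^{\otimes(N-k)}\emb\lambda_\Gamma$. Taking the direct sum over all the tuples and using property (ii), we conclude $\rho_0^{\otimes k}\emb\lambda_\Gamma$, as desired.

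The only mild subtlety — and the point I would be most careful about — is the passage from the abstract unitary equivalence $\rho_0\cong\bigoplus_{n\geq1}\pi^{\otimes n}$ to a genuine decomposition of $\rho_0^{\otimes k}$ as a direct sum of representations of the stated form: one must check that $H^{\odot k}$'s analogue does not intervene (it does not, because we are on the \emph{full} Fock space here, where there is no symmetrization) and that the tensor-power identity $(\bigoplus_n V_n)^{\otimes k}\cong\bigoplus_{(n_1,\dots,n_k)}V_{n_1}\otimes\cdots\otimes V_{n_k}$ holds as $\Gamma$-representations, which is immediate from distributivity of $\otimes$ over $\oplus$. Once this bookkeeping is in place, the rest is just the two formal properties of weak containment recalled above, and the proof is complete.
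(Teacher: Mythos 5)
Your proof is correct and supplies exactly the argument the paper leaves implicit: the paper states Lemma~\ref{weak} as an immediate consequence of the identification of the Koopman representation on $L^2(\Gamma(H_\R)'')$ with the representation on $\cF(H)$, i.e.\ of $\rho_0\cong\bigoplus_{n\geq 1}\pi^{\otimes n}$, without writing out the remaining steps. Your decomposition of $\rho_0^{\otimes k}$ into summands $\pi^{\otimes N}$ with $N\geq k$, combined with Fell absorption and the stability of weak containment in $\lambda_\Gamma$ under subrepresentations and direct sums, is precisely the intended (standard) route.
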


\subsection{Deformations associated to Gaussian and free Bogoljubov actions}\label{ssec:def} We will now recall the construction of s-malleable deformations of the crossed product von Neumann algebras associated to the above actions. On $H_\R\oplus H_\R$ consider the orthogonal operators
\[
A_t = \begin{pmatrix}
\cos(\frac{\pi}{2}t) & -\sin(\frac{\pi}{2}t)\\
\sin(\frac{\pi}{2}t) & \cos(\frac{\pi}{2}t)
\end{pmatrix}, \,t\in\R, \quad\text{and}\quad 
B = \begin{pmatrix}
1&0\\
0&-1
\end{pmatrix}.
\]
We note that canonically, $A_{\pi\oplus\pi}\cong A_\pi\bar{\otimes} A_\pi$ and $\Gamma(H_\R\oplus H_\R)''\cong \Gamma(H_\R)'' \ast \Gamma(H_\R)''$.  Under these identifications, we have that $\sigma_{\pi\oplus\pi}\cong \sigma_\pi\otimes\sigma_\pi$ and $\rho_{\pi\oplus\pi}\cong \rho_\pi\ast\rho_\pi$, respectively. Associated to the operators $A_t$ and $B$ we get automorphisms
\[
\alpha_t := \Ad(A_t),\; t\in\R, \quad\text{and}\quad \beta := \Ad(B)
\]
of $A_{\pi}\bar{\otimes} A_{\pi}$ and $\Gamma(H_\R)''\ast \Gamma(H_\R)''$, respectively. Since $A_t$ and $B$ commute with $\pi\oplus\pi$, it follows that $\alpha_t$ and $\beta$ commute with $\sigma_{\pi}\otimes\sigma_{\pi}$ and $\rho_{\pi}\ast\rho_{\pi}$, respectively.
\begin{itemize}
	\item For the Gaussian action, let $M=A_\pi\cross\Gamma$, $\tilde M = (A_\pi\otb A_\pi)\cross\Gamma$, and view $M$ as a subalgebra of $\tilde M$ via $M\cong (A_\pi\otb 1)\cross\Gamma$. By the discussion above, the automorphisms $\alpha_t$ and $\beta$ of $A_\pi\otb A_\pi$ extend to automorphisms of $\tilde M$ by letting $\alpha_t(u_g)=\beta(u_g)=u_g$, for all $g\in\Gamma$.
	\item For the free Bogoljubov action, let $M=\Gamma(H_\R)''\cross\Gamma$, $\tilde M = (\Gamma(H_\R)''\ast \Gamma(H_\R)'')\cross\Gamma$, and view $M$ as a subalgebra of $\tilde M$ via $M\cong (\Gamma(H_\R)''\ast 1)\cross\Gamma$. By the discussion above, the automorphisms $\alpha_t$ and $\beta$ of $\Gamma(H_\R)''\ast \Gamma(H_\R)''$ extend to automorphisms of $\tilde M$ by letting $\alpha_t(u_g)=\beta(u_g)=u_g$, for all $g\in\Gamma$.
\end{itemize}
In both cases, it is easy to check that $(\tilde M, (\alpha_t)_{t\in\mathbb R},\beta)$ is an $s$-malleable deformation of $M$.

\section{Spectral gap rigidity}

This section is devoted to the following rigidity result and its Corollary \ref{embed}.

\begin{theorem}\label{thm:spgap}
Let $(M,\tau)$  be a tracial von Neumann algebra and $N,P\subset M$ be von Neumann subalgebras. Assume that there exists an $s$-malleable deformation $(\tilde M, (\alpha_t)_{t\in\mathbb R},\beta)$ such that
\begin{enumerate}
\item The $M$-bimodule $\mathcal H:=L^2(\tilde M)\ominus L^2(M)$ has the property that $\mathcal H^{\otimes_M k}$
is weakly contained in the bimodule $L^2(M)\otimes_N L^2(M)$, for some $k\in\mathbb N$.
\item The $M$-bimodule $L^2(\tilde M)$ with the bimodular structure given by $x\cdot \xi\cdot y=x\xi\alpha_1(y)$, for every $x,y\in M,\xi\in L^2(\tilde M)$, is contained in a multiple of the bimodule $L^2(M)\otimes_PL^2(M)$.
\end{enumerate}
Let $Q\subset M$ be a von Neumann subalgebra such that $Qp$ is not amenable relative to $N$ inside $M$, for any non-zero projection $p\in Q'\cap M$. Then $Q'\cap M\prec_{M}^sP$.
\end{theorem}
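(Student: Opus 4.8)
The plan is to run Popa's spectral gap/deformation argument on the $s$-malleable deformation $(\tilde M,(\alpha_t),\beta)$, using hypothesis (1) to transfer non-amenability relative to $N$ into a uniform-convergence statement, and hypothesis (2) to convert that into the intertwining conclusion $Q'\cap M\prec_M^s P$. I would begin by recording the standard consequence of the transversality Lemma~\ref{trans}: it suffices to show that $\alpha_t\to\mathrm{id}$ uniformly on the unit ball $(Q'\cap M)_1$ as $t\to 0$, i.e. $\sup_{x\in(Q'\cap M)_1}\|\alpha_t(x)-x\|_2\to 0$; indeed, once this holds, a routine argument (the deformation moves the relative commutant of $Q$ uniformly little, $\beta$ flips the sign, and one extracts a partial isometry from $\langle M,e_P\rangle$ using the bimodule estimate in (2) — this is exactly the mechanism by which s-malleable deformations produce intertwiners) yields $Q'\cap M\prec_M^s P$. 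So the crux is the uniform convergence.

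To prove uniform convergence on $(Q'\cap M)_1$, I would argue by contradiction: if it fails, there are $\eps>0$, a sequence $t_j\to 0$, and $x_j\in(Q'\cap M)_1$ with $\|\alpha_{t_j}(x_j)-x_j\|_2\geq\eps$. The key point is a spectral gap / rigidity dichotomy for the $M$-bimodule $\mathcal H=L^2(\tilde M)\ominus L^2(M)$: since $x_j$ commutes with $Q$, the vectors $\alpha_{t_j}(x_j)-E_M(\alpha_{t_j}(x_j))\in\mathcal H$ are "almost $Q$-central" in $\mathcal H$ (because $\alpha_{t_j}$ commutes with the $M$-action suitably and $x_j\in Q'\cap M$), while being uniformly bounded away from $0$ in norm after normalization. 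By hypothesis (1), $\mathcal H^{\otimes_M k}\subset_{\mathrm{weak}} L^2(M)\otimes_N L^2(M)$, and tensor powers of almost-central vectors remain almost central; feeding this through the characterization of relative amenability (\cite[Theorem~2.1]{OP07}: $Q$ is amenable relative to $N$ iff $L^2(M)$ is weakly contained in $L^2(M)\otimes_N L^2(M)$ as $Q$-$M$-bimodules, detected precisely by existence of almost-$Q$-central nearly-tracial vectors), one produces a non-zero projection $p\in Q'\cap M$ with $Qp$ amenable relative to $N$ inside $M$ — contradicting the hypothesis on $Q$. Here I would use that the $\{\alpha_t\}$ deformation is asymptotically trivial on $M$, so the failure of uniform convergence genuinely lives on an "exotic" part of $\mathcal H$, and the $k$-fold Connes fusion is what lets the weak containment in (1) (which is only asserted for the $k$-th power) bite.

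The main obstacle, and where care is needed, is making the "almost-central vectors in $\mathcal H^{\otimes_M k}$ $\Rightarrow$ relative amenability of a corner of $Q$" step precise: passing from a sequence of almost-$Q$-central unit vectors in $\mathcal H$ to genuine relative amenability requires first taking $k$-fold fusion powers (to land in $L^2(M)\otimes_N L^2(M)$ via (1)), then carefully checking these powers are still almost central and still have non-vanishing "$M$-component" in the appropriate sense, and finally invoking a maximality/support-projection argument (in the spirit of Lemma~\ref{union} and Corollary~\ref{amenable}) to extract the projection $p\in Q'\cap M$. I would expect the second hypothesis to enter only at the very end, and rather softly: the containment of $\,{}_M L^2(\tilde M)_M$ (twisted by $\alpha_1$) in a multiple of $L^2(M)\otimes_P L^2(M)$ is precisely what upgrades "$\alpha_t(x)$ stays close to $M$ for $x\in(Q'\cap M)_1$, uniformly" into a non-zero element of $(Q'\cap M)'\cap\langle M,e_P\rangle$ of finite trace, which by Theorem~\ref{corner} gives $Q'\cap M\prec_M P$; the strong version $\prec_M^s$ then follows by applying the same argument to $(Q'\cap M)p'$ for an arbitrary non-zero projection $p'\in(Q'\cap M)'\cap M$, noting that $Q$'s hypothesis is insensitive to cutting down by such $p'$.
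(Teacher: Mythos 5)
Your proposal follows essentially the same route as the paper: hypothesis~(1) is used (via Boutonnet's spectral-gap lemma and the transversality Lemma~\ref{trans}) to obtain uniform convergence of $\alpha_t$ on $(Q'\cap M)_1$, and then Popa's convex-hull/$\beta$-doubling machinery produces $a_1\in p\tilde M\alpha_1(p)$ with $xa_1=a_1\alpha_1(x)$ for $x\in(Q'\cap M)p$, which together with hypothesis~(2) and Theorem~\ref{corner} gives $(Q'\cap M)p\prec_M P$ for every nonzero $p\in(Q'\cap M)'\cap M$. This matches the structure of Lemmas~\ref{lem:QMomega}--\ref{lem:alpha1impl} and the paper's final assembly, so the approach is the same.
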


The proof of Theorem \ref{thm:spgap} relies on Popa's deformation/rigidity theory and notably uses his spectral gap rigidity principle introduced in \cite{Po06a,Po06b}. 
Theorem \ref{thm:spgap} and Corollary \ref{embed} were inspired by Boutonnet's work (see \cite{Bo12} and \cite[Chapter II]{Bo14}), whose exposition we follow closely. Finally,  we note that condition (1) in Theorem \ref{thm:spgap} was first considered by Sinclair in \cite{Si10}.

\begin{corollary}\label{embed}
Let $\Gamma$ be a countable group and $\pi:\Gamma\rightarrow\mathcal O(\mathcal H_{\mathbb R})$ be an orthogonal representation. Assume that $\pi^{\otimes k}$ is weakly contained in the left regular representation of $\Gamma$, for some $k\in\mathbb N$. Let $\Gamma\curvearrowright (C,\tau)$ be either the Gaussian action or the free Bogoljubov action associated to $\pi$. 
Let $\Gamma\curvearrowright (D,\rho)$ be a trace preserving action on a tracial von Neumann algebra $D$, consider the diagonal product action $\Gamma\curvearrowright (C\bar{\otimes}D,\tau\otimes\rho)$, and denote $M=(C\bar{\otimes}D)\rtimes\Gamma$.

Let $Q\subset M$ be a von Neumann subalgebra such that $Qp$ is not amenable relative to $D$ inside $M$, for any non-zero projection $p\in Q'\cap M$. Then $Q'\cap M\prec_{M}^{s}D\rtimes\Gamma$.
\end{corollary}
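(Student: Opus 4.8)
The plan is to deduce Corollary \ref{embed} from Theorem \ref{thm:spgap} by verifying that the s-malleable deformation of $M=(C\bar\otimes D)\rtimes\Gamma$ constructed in Section \ref{ssec:def} satisfies hypotheses (1) and (2) with $N=D$ and $P=D\rtimes\Gamma$. The deformation $\tilde M$ to use is the one coming from doubling the Gaussian/free Bogoljubov algebra $C$ only: $\tilde M=(\tilde C\bar\otimes D)\rtimes\Gamma$ where $\tilde C=C\bar\otimes C$ (Gaussian) or $C\ast C$ (free Bogoljubov), with $\alpha_t=\Ad(A_t)$, $\beta=\Ad(B)$ acting trivially on $D$ and on the group. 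Then $M$ is recovered as $(C\bar\otimes 1)\bar\otimes D)\rtimes\Gamma$, and $(\tilde M,(\alpha_t),\beta)$ is an s-malleable deformation of $M$.

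\textbf{Verification of condition (2).} The $M$-bimodule $L^2(\tilde M)$ with the deformed right action $x\cdot\xi\cdot y=x\xi\alpha_1(y)$ should be analyzed by noting that $\alpha_1$ moves the ``first copy'' of $C$ into the ``second copy''. Concretely, $\alpha_1$ sends $C\bar\otimes 1\bar\otimes D\rtimes\Gamma$ to $1\bar\otimes C\bar\otimes D\rtimes\Gamma$ (Gaussian) or to the second free copy (free Bogoljubov), in each case an algebra that is \emph{free}, respectively tensor-independent, from $M$ over the common subalgebra $D\rtimes\Gamma$. A standard computation with Fock-space vectors (as in \cite{Bo12,Bo14}) shows that for $x\in M$ the closure of $M\xi\alpha_1(x)$-type vectors decomposes, as an $M$-$M$ bimodule, into pieces weakly contained in — in fact here literally contained in a multiple of — $L^2(M)\otimes_{D\rtimes\Gamma}L^2(M)$, because the conditional expectation onto $D\rtimes\Gamma$ controls all the relevant inner products. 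This gives condition (2) with $P=D\rtimes\Gamma$.

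\textbf{Verification of condition (1).} Here $\mathcal H=L^2(\tilde M)\ominus L^2(M)$ as an $M$-bimodule; one must show $\mathcal H^{\otimes_M k}$ is weakly contained in $L^2(M)\otimes_D L^2(M)$ for some $k$. The point is that $\mathcal H$ decomposes according to the Koopman/Fock grading of the auxiliary copy of $C$: it is a direct sum of bimodules of the form $L^2(D\rtimes\Gamma)$-amplifications of the Koopman representation of $\Gamma$ on $\mathcal F(\mathcal H_{\mathbb R})\ominus\mathbb C\Omega$ (or the symmetric analogue), tensored against $L^2(D)$-type pieces. Using the hypothesis that $\pi^{\otimes k}$ is weakly contained in the regular representation of $\Gamma$ — promoted to the statement that $\rho_0^{\otimes k}$ is weakly contained in the regular representation via Lemma \ref{weak} (and its Gaussian counterpart) — one gets that the $k$-fold relative tensor power over $M$ of $\mathcal H$ is weakly contained in $\ell^2(\Gamma)\otimes(\text{coefficient space})$, which as an $M$-bimodule is weakly contained in $L^2(M)\otimes_{L\Gamma}L^2(M)$ and hence in $L^2(M)\otimes_D L^2(M)$ after absorbing $L\Gamma\subset D\rtimes\Gamma$ appropriately; care is needed because we want weak containment in $L^2(M)\otimes_D L^2(M)$ with $D$, not $D\rtimes\Gamma$, but this follows since $\ell^2(\Gamma)$-type bimodules over a crossed product $D\rtimes\Gamma$ are weakly contained in $L^2(M)\otimes_D L^2(M)$. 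This is essentially the content of condition (1) of Theorem \ref{A}/\ref{B} being fed through the Fock-space machinery.

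\textbf{Main obstacle.} The routine but delicate part is the bimodule bookkeeping in condition (1): correctly identifying the $M$-bimodule structure on each graded piece of $\mathcal H^{\otimes_M k}$, keeping track of where the left and right $M=( C\bar\otimes D)\rtimes\Gamma$ actions land after the Connes fusion, and verifying that the tensor-power hypothesis on $\pi$ (via Lemma \ref{weak}) genuinely yields weak containment in $L^2(M)\otimes_D L^2(M)$ rather than merely in $L^2(M)\otimes_{D\rtimes\Gamma}L^2(M)$. Once both conditions are in place, Theorem \ref{thm:spgap} applied with $N=D$, $P=D\rtimes\Gamma$ immediately gives that any $Q\subset M$ with $Qp$ not amenable relative to $D$ for all nonzero $p\in Q'\cap M$ satisfies $Q'\cap M\prec_M^s D\rtimes\Gamma$, which is the assertion of Corollary \ref{embed}.
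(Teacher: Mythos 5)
Your high-level plan — apply Theorem~\ref{thm:spgap} with $N=D$, $P=D\rtimes\Gamma$, using the deformation from Section~\ref{ssec:def} of $M$ inside $\tilde M=(\tilde C\bar\otimes D)\rtimes\Gamma$ — is exactly what the paper does, and your treatment of condition (2) is broadly in line with the paper's (the observation $\tau(x\alpha_1(y))=\tau(xE_{D\rtimes\Gamma}(y))$ does the work in the Gaussian case). But your verification of condition (1) has a genuine gap, and it is precisely the point the paper spends the most effort on.

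You write that after invoking weak containment of $\rho_0^{\otimes k}$ in $\lambda$, one lands in ``$\ell^2(\Gamma)\otimes(\text{coefficient space})$, which as an $M$-bimodule is weakly contained in $L^2(M)\otimes_{L\Gamma}L^2(M)$ and hence in $L^2(M)\otimes_D L^2(M)$ after absorbing $L\Gamma\subset D\rtimes\Gamma$ appropriately.'' That last step does not follow. The bimodule $\mathcal K_\lambda$ (in the paper's notation) is naturally isomorphic to $L^2(M)\otimes_{C\bar\otimes D}L^2(M)$, not $L^2(M)\otimes_{L\Gamma}L^2(M)$, and the question of whether $L^2(M)\otimes_{C\bar\otimes D}L^2(M)\subset_{\text{weak}}L^2(M)\otimes_D L^2(M)$ is equivalent to asking whether $C\bar\otimes D$ is amenable relative to $D$, i.e.\ whether $C$ is amenable. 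In the Gaussian case $C=A_\pi$ is abelian, so this step is fine — and indeed the paper's Case~1 invokes exactly the amenability of $C$ at this point. In the free Bogoljubov case, however, $C\cong L(\mathbb F_{\dim\mathcal H_{\mathbb R}})$ is very much non-amenable, so the weak containment you need simply does not hold at this level, and ``absorbing'' $L\Gamma$ cannot fix this: there is no inclusion relation between $L\Gamma$ and $D$ that would make the appeal go through.

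The paper's Case~2 circumvents this by never reducing to a single ``group-coarse'' bimodule. Instead it decomposes $L^2(\tilde M)\ominus L^2(M)$ into pieces $\mathcal L_\xi\cong\mathcal H_\Phi$ for explicit completely positive maps $\Phi$, applies Lemma~\ref{compose} to express $\mathcal H_\Phi^{\otimes_M k}$ as a sum of $\mathcal H_\Psi$'s, and then crucially factors each $\Psi$ as $\Ad(U)\circ\Theta\circ\Omega\circ E_{D\rtimes\Gamma}$. The representation-theoretic hypothesis on $\pi$ is used only to show $\mathcal H_\Omega\subset_{\text{weak}}\mathcal H_{E_{C\bar\otimes D}}$, and then the identification of $L^2(M)$ as the $D\rtimes\Gamma$-$C\bar\otimes D$-bimodule $L^2(D\rtimes\Gamma)\otimes_D L^2(C\bar\otimes D)$ collapses the combined tensor product into something weakly contained in $L^2(M)\otimes_D\mathcal H_\Theta$, after which regularity of $D$ and $\Theta|_D=\id$ finish. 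This structural decomposition through $D$ — rather than any amenability of $C$ — is what makes condition (1) hold in the free Bogoljubov case, and it is the step your sketch is missing.
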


The remainder of this section is devoted to the proofs of Theorem~\ref{thm:spgap} and Corollary~\ref{embed}.

\begin{lemma}[\!\!\cite{Bo12}]\label{lem:QMomega}
	Let $(\tilde M,\tau)$  be a tracial von Neumann algebra and $N\subset M\subset \tilde M$ be von Neumann subalgebras. Assume that
the $M$-bimodule $\mathcal H:=L^2(\tilde M)\ominus L^2(M)$ has the property that $\mathcal H^{\otimes_M k}$ is weakly contained in the bimodule $L^2(M)\otimes_N L^2(M)$, for some $k\in\mathbb N$.
	
	Let $Q\subset M$ be a von Neumann subalgebra such that $Qp$ is not amenable relative to $N$ inside $M$, for any non-zero projection $p\in Q'\cap M$. Then $Q'\cap \tilde M^\omega \subset M^\omega$. In particular, $Q'\cap \tilde M\subset M$.
\end{lemma}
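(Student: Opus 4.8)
The plan is to run Popa's spectral gap argument, in the form used by Boutonnet \cite{Bo12}. We prove the first assertion, $Q'\cap\tilde M^\omega\subseteq M^\omega$; the ``in particular'' then follows since $Q'\cap\tilde M\subseteq Q'\cap\tilde M^\omega\subseteq M^\omega$ and $\tilde M\cap M^\omega=M$ inside $\tilde M^\omega$.

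First I would fix $x\in Q'\cap\tilde M^\omega$ with $\|x\|\leq1$ and set $y:=E_{M^\omega}(x)$. Since $Q\subseteq M$ and $E_{M^\omega}$ is $M^\omega$-bimodular, $y\in Q'\cap M^\omega$, so $\xi:=x-y$ is a vector in the $M$-bimodule $L^2(\tilde M^\omega)\ominus L^2(M^\omega)$ with $u\xi u^*=\xi$ for all $u\in\mathcal U(Q)$; it suffices to prove $\xi=0$. Assume not. Writing $x=(x_n)_\omega$ with $\|x_n\|\leq1$, we get $\xi=(\xi_n)_\omega$ with $\xi_n:=x_n-E_M(x_n)\in\mathcal H$ and $\sup_n\|\xi_n\|\leq2$. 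A routine computation with ultralimits then shows that the cyclic $M$-bimodule $\mathcal L:=\overline{M\xi M}$ satisfies $\|\pi_{\mathcal L}(T)\|\leq\|\pi_{\mathcal H}(T)\|$ for every $T\in M\otimes_{\text{alg}}M^{\text{op}}$, i.e.\ $\mathcal L\subset_{\text{weak}}\mathcal H$, and $\mathcal L$ contains the nonzero $Q$-central vector $\xi$.

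Next, since conjugation gives an isomorphism $\overline{\mathcal H}\cong\mathcal H$ of $M$-bimodules and $\mathcal H\oplus\mathcal H\subset_{\text{weak}}\mathcal H$, we also have $\mathcal L\oplus\overline{\mathcal L}\subset_{\text{weak}}\mathcal H$. The point now is to pass from $\xi$ to the \emph{symmetrized} vector $\zeta_0:=\xi\oplus\overline{\xi}\in\mathcal L\oplus\overline{\mathcal L}$: it is again nonzero and $Q$-central, but in addition its left and right covariances, viewed as positive elements of $Q'\cap M$, coincide — both equal $R:=S+T$, where $S$ and $T$ are the left and right covariances of $\xi$ — and so have a common support projection $e\in Q'\cap M$, which is nonzero since $\xi\neq0$. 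Using that Connes fusion preserves weak containment together with the hypothesis $\mathcal H^{\otimes_M k}\subset_{\text{weak}}L^2(M)\otimes_N L^2(M)$, we obtain that
\[
\zeta:=\zeta_0^{\otimes_M k}\in(\mathcal L\oplus\overline{\mathcal L})^{\otimes_M k}\subset_{\text{weak}}\mathcal H^{\otimes_M k}\subset_{\text{weak}}L^2(M)\otimes_N L^2(M)
\]
is a nonzero $Q$-central vector (nonzero because $\|\zeta\|^2=\tau(R^k)>0$), whose left and right covariances both equal $R^k$.

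The remaining and main step is to derive a contradiction from the existence of such a $\zeta$; this is essentially the content of the cited lemma, via the relative amenability machinery of Ozawa--Popa \cite{OP07}. I would first transfer $\zeta$ through the weak containment to obtain a net $(\eta_j)$ of vectors in (an amplification of) $L^2(M)\otimes_N L^2(M)$ with $\langle a\eta_j b,\eta_j\rangle\to\langle a\zeta b,\zeta\rangle$ for all $a,b\in M$ and $\|u\eta_j-\eta_j u\|_2\to0$ for all $u\in\mathcal U(Q)$. Choosing $\delta>0$ small enough that $q:=1_{[\delta,\infty)}(R)\in Q'\cap M$ is nonzero and putting $c:=(R^k q)^{-1/2}\in qMq\cap(Q'\cap M)$, the vectors $\eta_j':=c\eta_j$ satisfy $q\eta_j'q\approx\eta_j'$, remain asymptotically central for $\mathcal U(Qq)$, and have both covariances converging to $\tau_{|qMq}$; by \cite[Theorem~2.1]{OP07} this forces $Qq$ to be amenable relative to $N$ inside $M$, contradicting the hypothesis. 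Hence $\xi=0$. I expect this final step to be the delicate part: both the necessity of the symmetrization $\xi\mapsto\xi\oplus\overline{\xi}$ (without which $\zeta_0^{\otimes_M k}$ could vanish) and the correct normalization of $\zeta$ producing bi-tracial, asymptotically $Qq$-central vectors — including the verification that the resulting functional on $q\langle M,e_N\rangle q$ is genuinely $Qq$-central — are where the real work lies.
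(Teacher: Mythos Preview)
Your proof is correct and follows exactly the strategy of \cite[Lemma~2.3]{Bo12}, which is all the paper does: it gives no independent argument, simply noting that Boutonnet's proof (written there for $N=\mathbb C1$) goes through verbatim for general $N$. Your sketch --- extracting a nonzero $Q$-central vector in a bimodule weakly contained in $\mathcal H$, symmetrizing via $\xi\oplus\bar\xi$ to equalize the left and right covariances, taking the $k$-fold Connes tensor to land weakly in $L^2(M)\otimes_N L^2(M)$, and then invoking \cite{OP07} to force a relatively amenable corner of $Q$ --- is a faithful expansion of that cited argument.
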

\begin{proof}
	The proof of \cite[Lemma~2.3]{Bo12}, which applies verbatim for $N=\mathbb C1$, works in general.	
\end{proof}

The following lemma is a standard application of Popa's spectral gap rigidity principle.

\begin{lemma}\label{lem:rigid}
	Let $(M,\tau)$  be a tracial von Neumann algebra and $N\subset M$ be a von Neumann subalgebra. Assume that there exists an s-malleable deformation $(\tilde M, (\alpha_t)_{t\in\mathbb R},\beta)$ such that
	the $M$-bimodule $\mathcal H:=L^2(\tilde M)\ominus L^2(M)$ has the property that $\mathcal H^{\otimes_M k}$ is weakly contained in the bimodule $L^2(M)\otimes_N L^2(M)$, for some $k\in\mathbb N$.

	Let $Q\subset M$ be a von Neumann subalgebra such that $Qp$ is not amenable relative to $N$ inside $M$, for any non-zero projection $p\in Q'\cap M$. Then $\alpha_t$ converges uniformly on $(Q'\cap M)_1$.
\end{lemma}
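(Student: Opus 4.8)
The statement is the classic ``spectral gap $\Rightarrow$ uniform convergence of the deformation'' dichotomy, adapted to the relative-amenability setting. The plan is to argue by contradiction: if $\alpha_t$ does \emph{not} converge uniformly on $(Q'\cap M)_1$, then there is $c>0$ and, for every $t>0$, a unitary $v_t\in\mathcal U(Q'\cap M)$ with $\|\alpha_t(v_t)-v_t\|_2\geq c$. The first step is to use the s-malleability, and in particular the transversality inequality (Lemma~\ref{trans}), to convert this into a statement that $\|\alpha_{t/2}(v_t)-E_M(\alpha_{t/2}(v_t))\|_2$ is bounded below by $c/2$, so the deformation moves the unit ball of $Q'\cap M$ a definite amount into the ``off-diagonal'' part $\mathcal H=L^2(\tilde M)\ominus L^2(M)$, uniformly in $t$.

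The second step is the spectral gap argument itself. Because $Q$ commutes with each $v_t$, for $x\in Q$ we have $\alpha_t(x)v_t - v_t\alpha_t(x) = \alpha_t(x)\alpha_t(v_t)-\alpha_t(v_t)\alpha_t(x) - (\text{small error from }\alpha_t(x)\approx x$, which goes to $0$ as $t\to0$ by Definition~\ref{def:smal}(2)$)$. Thus, along a sequence $t_n\to0$ and passing to the ultraproduct, the vector $\xi:=(v_{t_n}-E_M(\alpha_{t_n/2}(v_{t_n})))_n$ — more precisely the component of $(\alpha_{t_n/2}(v_{t_n}))_n$ lying in the ultraproduct of copies of $\mathcal H$ — is a nonzero, $Q$-central vector (in the bimodule sense) for the $M$-bimodule built from $\mathcal H$. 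By hypothesis $\mathcal H^{\otimes_M k}\subset_{\mathrm{weak}} L^2(M)\otimes_N L^2(M)$, and one checks that the existence of a nonzero $Q$-central vector in (the ultrapower of) $\mathcal H$ forces, after tensoring $k$ times and using that relative amenability is detected by such central vectors via Ozawa--Popa \cite[Theorem 2.1]{OP07}, the conclusion that $Qp$ is amenable relative to $N$ inside $M$ for some nonzero $p\in Q'\cap M$ (the projection $p$ arises as the support of the operator in $Q'\cap M$ associated to the positive functional one extracts from the central vector, exactly as in Lemma~\ref{union}). This contradicts the hypothesis on $Q$.

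The bookkeeping that needs care is the passage through the $k$-th Connes fusion power: a central vector in $\mathcal H$ need not directly give one in $L^2(M)\otimes_N L^2(M)$, but its $k$-fold fusion power is a central vector in $\mathcal H^{\otimes_M k}$, hence (by weak containment and a standard averaging/$\|\cdot\|_2$-density argument as in Lemma~\ref{compose}(2)) yields an $M$-central state on $\langle M, e_N\rangle$ restricting to $\tau$ on $M$ and with appropriate faithfulness on a corner, which is precisely relative amenability of a corner of $Q$. One must also make sure the central vector produced is genuinely nonzero: this is where the transversality estimate and the uniform lower bound $c$ are essential — they guarantee the off-diagonal mass does not escape in the ultralimit. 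I expect this nonvanishing-of-the-limit-central-vector step, together with correctly propagating centrality through the $k$-fold fusion, to be the main technical obstacle; the rest is the by-now-standard deformation/rigidity template of Popa, Boutonnet, and Ozawa--Popa.
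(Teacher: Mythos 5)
Your overall strategy—convert non-uniform convergence into uniformly large off-diagonal mass via the transversality Lemma~\ref{trans}, then rule it out with a spectral gap argument using the bimodule hypothesis and non-relative-amenability—is the right one, and it matches what the paper does. However, you then attempt to prove the spectral gap step from scratch, via ultraproduct central vectors, Connes fusion powers, and weak containment, and you yourself flag the nonvanishing-of-the-fusion-power step as the sticking point. That flagging is well placed: it is a genuine gap, and the transversality estimate alone does \emph{not} close it. A nonzero $Q$-central vector $\eta\in\mathcal H$ with $\|\eta\|_2\geq c/2$ can still have $\eta\otimes_M\eta=0$. Concretely, if $\eta=q_1\eta'q_2$ for orthogonal projections $q_1,q_2\in M$ (possibly in $Q'\cap M$, so that $Q$-centrality is not obstructed), then using the formula $\langle\eta\otimes_M\eta,\eta\otimes_M\eta\rangle=\tau\bigl(\eta^*E_M(\eta^*\eta)\eta\bigr)$ one finds $E_M(\eta^*\eta)\eta=q_2E_M(\eta'^*q_1\eta')q_2\,q_1\eta'q_2=0$ because $q_2q_1=0$. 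So the uniform lower bound $\|\eta_n\|_2\geq c/2$ does not propagate to $\|\eta_n^{\otimes_M k}\|$, and a different idea is needed to get a nonzero almost-$Q$-central vector in $\mathcal H^{\otimes_M k}$.

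The paper sidesteps all of this by citing Lemma~\ref{lem:QMomega}, which packages exactly the spectral gap statement you are after: under the same hypotheses, $Q'\cap\tilde M^{\omega}\subset M^{\omega}$. Given that, the paper's proof is short and direct rather than by contradiction: for each $\eps>0$ the inclusion $Q'\cap\tilde M^{\omega}\subset M^{\omega}$ yields finitely many $x_1,\dots,x_n\in Q$ and $\delta>0$ such that any $y\in(\tilde M)_1$ with $\|[y,x_i]\|_2\leq\delta$ satisfies $\|y-E_M(y)\|_2\leq\eps$; for $x\in(Q'\cap M)_1$ and $s$ small, $\|[\alpha_s(x),x_i]\|_2\leq 2\|\alpha_s(x_i)-x_i\|_2\leq\delta$ since $x$ commutes with $x_i$; so $\|\alpha_s(x)-E_M(\alpha_s(x))\|_2\leq\eps$, and transversality gives $\|\alpha_{2s}(x)-x\|_2\leq2\eps$ uniformly in $x$. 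You should invoke Lemma~\ref{lem:QMomega} directly: it both closes your gap and collapses the proof to a few lines.
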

\begin{proof}
	Fix $\eps>0$. Since $Q'\cap \tilde M^\omega \subset M^\omega$ by Lemma \ref{lem:QMomega}, there exist $x_1,\dots,x_n\in Q$ and $\delta>0$ such that for all $y\in (\tilde M)_1$:
	\[
	\forall i\in\{1,\dots,n\}: \norm{[y,x_i]}_2\leq\delta \quad\Longrightarrow\quad \norm{y-E_M(y)}_2\leq \eps.
	\]
	Taking $t>0$ such that $\norm{\alpha_s(x_i)-x_i}_2\leq \frac{\delta}{2}$ for all $1\leq i\leq n$ and all $s\in [0,t]$, we get for any $x\in (Q'\cap M)_1$ 
	\begin{align*}
	\norm{\alpha_s(x)x_i-x_i\alpha_s(x)}_2 &=\norm{x\alpha_{-s}(x_i)-\alpha_{-s}(x_i)x}_2\\
	&\leq 2\norm{x}\norm{\alpha_{-s}(x_i)-x_i}_2 + \norm{xx_i-x_ix}_2\\
	&\leq 2\norm{\alpha_s(x_i)-x_i}_2\\
	&\leq \delta.
	\end{align*}
	Hence for all $s\in [0,t]$ and $x\in (Q'\cap M)_1$, we have $\norm{\alpha_s(x)-E_M(\alpha_s(x))}_2\leq\eps$ and thus by Lemma~\ref{trans},
$\norm{\alpha_{2s}(x)-x}_2\leq 2\eps$.	It follows that $\alpha_t$ converges uniformly on $(Q'\cap M)_1$.
\end{proof}


\begin{lemma}\label{lem:alpha1impl}
	Assume the setting of Lemma~\ref{lem:rigid} and let $p\in (Q'\cap M)'\cap M$ be a non-zero projection. Then there is a non-zero element $a_1\in p\tilde M\alpha_1(p)$ such that $xa_1=a_1\alpha_1(x)$ for all $x\in (Q'\cap M)p$.
\end{lemma}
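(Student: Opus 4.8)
The plan is to run Popa's spectral gap / malleable deformation argument: from the uniform convergence of $(\alpha_t)$ on $(Q'\cap M)_1$ provided by Lemma~\ref{lem:rigid}, extract for small $t$ a partial isometry intertwining the $(Q'\cap M)p$-actions $x\mapsto x$ and $x\mapsto\alpha_t(x)$, and then propagate this intertwiner to time $t=1$. Throughout write $P_0:=Q'\cap M$ and $g(t):=\sup_{x\in(P_0)_1}\|\alpha_t(x)-x\|_2$, so that $g(t)\to0$ as $t\to0$.

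First I would isolate the \emph{extraction} step. Suppose $w\in(\tilde M)_1$ is a ``$\delta$-almost-intertwiner at time $s$'', i.e. $\sup_{x\in(P_0p)_1}\|xw-w\alpha_s(x)\|_2\le\delta$. Then the $\|\cdot\|_2$-minimal element $a$ of $\mathcal C:=\overline{\mathrm{conv}}^{\,\|\cdot\|_2}\{uw\alpha_s(u^*):u\in\mathcal U(P_0p)\}\subseteq(\tilde M)_1$ satisfies $xa=a\alpha_s(x)$ for all $x\in P_0p$, lies in $p\tilde M\alpha_s(p)$, and obeys $\|a-w\|_2\le\delta$ — so $\|a\|_2\ge\|w\|_2-\delta$. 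Indeed each generator is within $\delta$ of $w$ (apply the hypothesis to $x=u^*$), $\mathcal C$ is invariant under the $L^2$-isometries $\xi\mapsto u\xi\alpha_s(u^*)$, and the minimal vector is unique. Two elementary remarks drive the iteration: (i) $p$ is a $g(s)$-almost-intertwiner at every time $s$, since $\|xp-p\alpha_s(x)\|_2=\|p(x-\alpha_s(x))\|_2\le g(s)$ for $x\in(P_0p)_1$; and (ii) if $v$ intertwines $P_0p$ at time $t$ then $\|xv-v\alpha_{t'}(x)\|_2\le\|\alpha_{t-t'}(x)-x\|_2\le g(|t-t'|)$ for $x\in(P_0p)_1$, because $\alpha_{t-t'}\in\mathrm{Aut}(\tilde M,\tilde\tau)$ has the same modulus $g$.

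Next I would pass to time $1$. Apply (i) and the extraction step at a small time $t_0>0$, then take polar parts, to obtain a partial isometry $v$ with $xv=v\alpha_{t_0}(x)$ on $P_0p$ and $\tau(vv^*)\ge\tau(p)-g(t_0)^2$. Now iterate the \emph{doubling} $v\mapsto v\alpha_t(v)$: one checks directly that $x\,(v\alpha_t(v))=(v\alpha_t(v))\,\alpha_{2t}(x)$ for $x\in P_0p$, so this yields an intertwiner at time $2t$; since $\|v\alpha_t(v)\|\le1$ one may replace it by its polar part without lowering the $2$-norm, and the projection inequality $\tau(fe)\ge\tau(r)-\tau(r-f)-\tau(r-e)$ for projections $f,e\le r$ gives, with $r=\alpha_t(p)$, the size estimate $\|v\alpha_t(v)\|_2^2\ge2\|v\|_2^2-\tau(p)$. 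Hence with $c_k:=\tau(v^{(k)}v^{(k)*})$ one has $c_{k+1}-\tau(p)\ge2(c_k-\tau(p))$, and choosing $t_0=2^{-n}$ with $n$ large (here the control of $g$ near $0$, ultimately resting on transversality, is used) keeps $c_k>\tau(p)/2$ — so in particular the successive doublings are nonzero — through all $n$ steps, landing on a nonzero intertwiner at time $2^n t_0=1$. If only an $L^2$-intertwiner $a_1$ is obtained, note that $xa_1a_1^*=a_1\alpha_1(x)a_1^*=a_1a_1^*x$, so $a_1a_1^*\in(P_0p)'\cap L^1(p\tilde Mp)$ and a spectral projection of it truncates $a_1$ to a nonzero bounded element of $p\tilde M\alpha_1(p)$ still satisfying $xa_1=a_1\alpha_1(x)$.

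The hard part will be exactly this passage to time $1$: one must guarantee that the intertwiner does not collapse as $t$ runs from $0$ to $1$, even though $\alpha_1$ genuinely displaces $M$ inside $\tilde M$ and $(\alpha_t)$ is \emph{not} uniformly close to the identity on $(P_0)_1$ near $t=1$. The size bookkeeping above is one route; a variant is a connectedness argument — using the extraction step and remark (ii) to show that the set of times carrying an intertwiner of size $\ge\tau(p)/2$ is relatively open and closed in $[0,1]$ — which may in addition call on the s-malleable involution $\beta$ (via $\beta\alpha_t\beta=\alpha_{-t}$, reflecting intertwiners across $t=0$) and the ultrapower spectral-gap statement $Q'\cap\tilde M^\omega\subseteq M^\omega$ of Lemma~\ref{lem:QMomega}. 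The remaining ingredients are routine Hilbert-space convexity and trace estimates.
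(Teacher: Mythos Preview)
Your extraction step (Claim~1 in the paper's proof) is fine, and the observation that an intertwiner $v$ at time $t$ yields one at time $2t$ via $v\alpha_t(v)$ is correct. The problem is the size control in the iteration. Your recursion $c_{k+1}-\tau(p)\ge 2(c_k-\tau(p))$ says the \emph{deficit} $\tau(p)-c_k$ can at most \emph{double} at each step. Starting from $c_0\ge\tau(p)-g(2^{-n})^2$ and doubling $n$ times, you only get $c_n\ge\tau(p)-2^n g(2^{-n})^2$, so to keep $c_n>0$ you need $g(2^{-n})=o(2^{-n/2})$. Uniform convergence $g(t)\to0$ furnishes no rate at all (and transversality does not help: it relates $g(2t)$ to $\sup_x\|\alpha_t(x)-E_M(\alpha_t(x))\|_2$, not to anything with a prescribed decay). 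So the naive doubling $v\mapsto v\alpha_t(v)$ may collapse to zero before reaching $t=1$; this is precisely the well-known obstruction that forces the use of the involution $\beta$.

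The paper's argument (following Popa) bypasses size tracking entirely. The doubling step is $a_{2t}:=\alpha_t(\beta(a_t^*)\,b\,a_t)$ for a suitable $b\in Q$, and the point is that non-vanishing is proved \emph{qualitatively}: if $\beta(a_t^*)ba_t=0$ for every $b\in Q$, one averages $r=a_ta_t^*$ over $\mathcal U(Q)$ to get a nonzero $s\in Q'\cap\tilde M$, and then the spectral-gap input $Q'\cap\tilde M\subset M$ (Lemma~\ref{lem:QMomega}) forces $s\in M$, hence $\beta(s)=s$, contradicting $\beta(s)s=0$. No estimate on the size of $a_{2t}$ is needed or obtained. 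You mention $\beta$ and $Q'\cap\tilde M^\omega\subset M^\omega$ only as optional ingredients of a ``variant'' connectedness argument, but that variant is not carried out (closedness of the set of good times is not clear from what you wrote, since weak limits do not preserve the lower bound on $\|\cdot\|_2$), and your primary route does not use them. The fix is to replace your product doubling by the $\beta$-twisted doubling and invoke $Q'\cap\tilde M\subset M$ for non-vanishing.
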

\begin{proof} We follow closely the proof of \cite[Theorem 4.1]{Po03}.
	Put $D=Q'\cap M$ and fix a projection $p\in D'\cap M$. %
	
	{\bf Claim 1.}
	For any $t>0$ small enough, there exists a non-zero element $a_t\in p\tilde M\alpha_t(p)$ such that $a_t = ua_t\alpha_t(u^*)$ for all $u\in\cU(Dp)$.
	
	{\it Proof of Claim 1.} By Lemma~\ref{lem:rigid}, $\alpha_t\rightarrow \id$ uniformly on $(Dp)_1$, as $t\rightarrow 0$. Thus, for any $t>0$ small enough we have that $\norm{u-\alpha_t(u)}_2^2 \leq \tau(p)$ and hence
	\begin{equation}\label{eq:Retau}
	\text{$\Re\tau(u\alpha_t(u^*))\geq \frac{\tau(p)}{2}$, for all $u\in\cU(Dp)$}.
	\end{equation}
	Consider the unique element $a_t$ of minimal $\|.\|_2$-norm in the  $\|.\|_2$-closure of the convex hull of the set $\{u\alpha_t(u^*)\mid u\in \cU(Dp)\}$. By uniqueness, we have $a_t=ua_t\alpha_t(u^*)$ for all $u\in\cU(Dp)$. Moreover, by \eqref{eq:Retau} we get $\Re\tau(a_t)\geq \frac{\tau(p)}{2}>0$, hence $a_t\neq 0$.\hfill$\square$
	
	{\bf Claim 2.} Let $t>0$ and $a_t\in p\tilde M\alpha_t(p)$ be a non-zero element such that $a_t = ua_t\alpha_t(u^*)$ for all $u\in\cU(Dp)$.
	Then there exists $b\in Q$ such that $a_{2t}:=\alpha_t(\beta(a_t^*)ba_t)\neq 0$. Moreover, $a_{2t}\in p\tilde M\alpha_{2t}(p)$ satisfies $a_{2t} = ua_{2t}\alpha_{2t}(u^*)$ for all $u\in\cU(Dp)$.
	
	{\it Proof of Claim 2.} To prove the first part of the claim, assume that $\alpha_t(\beta(a_t^*)ba_t)=0$ and thus $\beta(a_t^*)ba_t=0$, for all $b\in Q$.
Thus, if we let $r=a_ta_t^*\in\tilde M$, then since $\beta(u_1^*)=u_1^*$, we get that \begin{equation}\label{a_t}\text{$\beta(u_1ru_1^*)u_2ru_2^*=\beta(u_1a_t)(\beta(a_t^*)u_1^*u_2a_t)(a_t^*u_2^*)=0$, for all $u_1,u_2\in\mathcal U(Q)$.}\end{equation}
	Let $s$ be the element of minimal $\|.\|_2$-norm in the $\|.\|_2$-closure of the convex hull of the set $\{uru^*\mid u\in\mathcal U(Q)\}$. 
	Since $\tau(s)=\tau(r)>0$ and $s\geq 0$, we get that $s\not=0$ and further that $s^2\not=0$. 
	By uniqueness, we have that $s\in Q'\cap \tilde M$ and since $Q'\cap \tilde M\subset M$ 
  by Lemma~\ref{lem:QMomega} we conclude that $s\in M$.
  By combining the last two facts we get that $\beta(s)s=s^2\not=0$.
  This however contradicts  \eqref{a_t} which implies that $\beta(s)s=0$. The moreover assertion is now a straightforward calculation.  \hfill$\square$
	
	By Claim 1, its conclusion holds for $t=2^{-k}$ for some $k\in\N$. Using Claim 2 and induction, we then find  $0\not=a_1\in p\tilde M\alpha_1(p)$ such that $a_1=ua_1\alpha_1(u^*)$, for all $u\in\mathcal U(Dp)$. \end{proof}

\begin{proof}[Proof of Theorem~\ref{thm:spgap}]
	Let $p\in (Q'\cap M)'\cap M$ be a non-zero projection. We need to show that $(Q'\cap M)p\emb_M P$. 
	By Lemma~\ref{lem:alpha1impl} we can find $0\not=a_1\in p\tilde M\alpha_1(p)$ such that $xa_1=a_1\alpha_1(x)$ for all $x\in (Q'\cap M)p$. Thus, the $pMp$-bimodule $_{pMp}L^2(\tilde M)_{\alpha_1(pMp)}$ contains a non-zero $(Q'\cap M)p$-central vector. Since this bimodule is contained in a multiple of $pL^2(M)\otimes_P L^2(M)p$ by assumption (2), we get that $pL^2(M)\otimes_P L^2(M)p$ contains a non-zero $(Q'\cap M)p$-central vector. In other words, the $pMp$-bimodule $pL^2(\langle M,e_P\rangle)p$ contains a non-zero $(Q'\cap M)p$-central vector $\xi$.
	Let $\varepsilon>0$ such that $f=1_{[\varepsilon,\infty)}(\xi^*\xi)\not=0$. Then we have that $f\in ((Q'\cap M)p)'\cap p\langle M,e_P\rangle p$. Since $\hat{\tau}(f)\leq \|\xi\|^2/\varepsilon<\infty$,
	 Theorem~\ref{corner} implies that $(Q'\cap M)p\emb_M P$,  thus finishing the proof of the theorem.
\end{proof}

\begin{proof}[Proof of Corollary~\ref{embed}]

In Section~\ref{ssec:def}, we defined an $s$-malleable deformation ($\tilde C\rtimes\Gamma$, $(\alpha_t)_{t\in\mathbb R},\beta)$ of $C\rtimes\Gamma$, where $\tilde C=C\bar{\otimes}C$ or $\tilde C=C*C$, depending on whether $\Gamma\curvearrowright C$ is the Gaussian action or the free Bogoljubov action associated to $\pi$, respectively. By construction, $\alpha_t(\tilde C)=\tilde C$, $\beta(\tilde C)=\tilde C$ and $\alpha_t(u_g)=u_g$, for all $t\in\mathbb R$ and $g\in\Gamma$.
 Recall that $M=(C\bar{\otimes}D)\rtimes\Gamma$ and put $\tilde M=(\tilde C\bar{\otimes}D)\rtimes\Gamma$. 
 We extend $\alpha_t$ and $\beta$ to automorphisms of $\tilde M$ by letting $\alpha_t(x)=\beta(x)=x$, for all $t\in\mathbb R$ and $x\in D$.  Then $(\tilde M,(\alpha_t)_{t\in\mathbb R},\beta)$ is an $s$-malleable deformation of $M$.
In order to derive the conclusion, it remains to verify that conditions (1) and (2) from Theorem~\ref{thm:spgap} are satisfied with $N=D$ and $P=D\cross\Gamma$.

	As in the proof of \cite[Lemma 3.5]{Va10b}, given a unitary representation $\eta:\Gamma\rightarrow\mathcal U(\mathcal K)$, we define $\mathcal K_{\eta}=\mathcal K\otimes L^2(M)$ and endow it with the following $M$-bimodule structure: $$\text{$(au_g)\cdot (\xi\otimes x)\cdot (bu_h)=\eta_g(\xi)\otimes au_gxbu_h$, for all $a,b\in C\bar{\otimes}D$, $g,h\in\Gamma$, $x\in M$, and $\xi\in\mathcal K$.}$$
If $\eta':\Gamma\rightarrow\mathcal U(\mathcal K')$ is another unitary representation of $\Gamma$, then $\mathcal K_{\eta\otimes\eta'}\cong\mathcal K_{\eta}\otimes_M\mathcal K_{\eta'}$, and if $\eta$ is weakly contained in $\eta'$, then $\mathcal K_{\eta}\subset_{\text{weak}}\mathcal K_{\eta'}$.
	
	{\it Case 1.} $\Gamma\actson^{\sigma} (C,\tau)$ is the Gaussian action associated to $\pi$.
	
	Let $\sigma_0:\Gamma\rightarrow\mathcal U(L^2(C)\ominus\mathbb C1)$ be the restriction of the Koopman representation of $\sigma$ to $L^2(C)\ominus\mathbb C1$.
	Since  $\pi^{\otimes k}$ is weakly contained in the left regular representation $\lambda$ of $\Gamma$, the same holds for $\sigma_0^{\otimes k}$ by \cite[Proposition 2.7]{PS09} and \cite[Proposition II.1.15]{Bo14}. Since the $M$-bimodule $L^2(\tilde M)\ominus L^2(M)$ is isomorphic to $\mathcal K_{\sigma_0}$ we conclude that $$(L^2(\tilde M)\ominus L^2(M))^{\otimes_M k}\cong \mathcal K_{{\sigma_0}}^{\otimes_M k}\cong\mathcal K_{\sigma_0^{\otimes k}}\subset_{\text{weak}}\mathcal K_{\lambda}.$$
	Since $C$ is abelian, hence amenable, $\mathcal K_{\lambda}\cong L^2(M)\otimes_{C\bar{\otimes}D}L^2(M)$ is weakly contained in $L^2(M)\bar{\otimes}_DL^2(M)$, proving condition (1). Since $L^2(\tilde M)=\overline{M\alpha_1(M)}^{\|.\|_2}$ and $\tau(x\alpha_1(y))=\tau(xE_{D\rtimes\Gamma}(y))$, for all $x,y\in M$, the $M$-bimodule $_ML^2(\tilde M)_{\alpha_1(M)}$ is isomorphic to $L^2(M)\otimes_{D\rtimes\Gamma}L^2(M)$. Thus, condition (2) also holds.

	{\it Case 2.} $\Gamma\actson^{\rho} (C,\tau)$ is the free Bogoljubov action associated to $\pi$.

	We will denote still by $\rho$ the diagonal product action of $\Gamma$ on $\tilde C\bar{\otimes}D$.
	
	{\bf Claim.} Let $\xi=\xi_1\xi_2...\xi_n\in\tilde C=C\ast C$, where $\xi_1\in 1\ast (C\ominus\mathbb C1),\xi_2\in (C\ominus\mathbb C1)\ast 1,...,\xi_n\in 1\ast(C\ominus\mathbb C1)$. Then the $M$-bimodule $\mathcal L_{\xi}:=\overline{M\xi M}$ satisfies $\mathcal L_{\xi}^{\otimes_M k}\subset_{\text{weak}}L^2(M)\otimes_D L^2(M)$.
	
	{\it Proof of the claim.} Define $\varphi:\Gamma\rightarrow\mathbb C$ and the completely positive map $\Phi:M\rightarrow M$ by letting $\varphi(g)=\langle\rho_g(\xi),\xi\rangle$ and $\Phi((c\otimes d)u_g)=\tau(c)\varphi(g) (1\otimes d)u_g$, for all $c\in C,d\in D$ and $g\in\Gamma$.
	
	If $c,c'\in C\ast 1$, $d,d'\in D$ and $g,g'\in\Gamma$, then  $\langle c\rho_g(\xi)c',\xi\rangle=\tau(\xi^*c\rho_g(\xi)c')=\tau(c)\tau(c')\varphi(g),$ 
	and thus
	\begin{align*}
		\langle (c\ot d)u_g\xi u_{g'}(c'\ot d'),\xi\rangle &= \delta_{gg',e}\langle c\rho_g(\xi)c',\xi\rangle \langle dd',1\rangle\\
		&= \delta_{gg',e}\varphi(g)\tau(c)\tau(c') \tau(dd')\\
		&= \tau(\Phi((c\ot d)u_g)u_{g'}(c'\ot d')),
	\end{align*}
		In other words, using the notation from section~\ref{ssec:Hilb}, this means that $\mathcal L_{\xi}\cong \cH_{\Phi}$, as $M$-bimodules.
	Note that if $v\in\mathcal U(C), w\in\mathcal U(D), h\in\Gamma$, then for all $d\in D$ and $g\in\Gamma$ we have that
	\begin{equation}\label{Phi}
	[\Phi\circ\text{Ad}((v\otimes w)u_h)]((1\otimes d)u_g)=\tau(v\rho_{hgh^{-1}}(v)^*)\varphi(hgh^{-1})\text{Ad}((1\otimes w)u_h)((1\otimes d)u_g).
	\end{equation}

	Let $\mathcal U$ be the set of unitaries  $u\in M$ of the form $u=(v\otimes w)u_h$, with $v\in\mathcal U(C), w\in\mathcal U(D), h\in\Gamma$. 
	 Since the span of $\mathcal U$ is $\|.\|_2$-dense in $M$, Lemma \ref{compose}(2) implies that the $M$-bimodule $\mathcal L_{\xi}^{\otimes_M k}\cong\mathcal H_{\Phi}^{\otimes_M k}$ is isomorphic to a sub-bimodule of $$\bigoplus_{u_1,...,u_{k-1}\in\; \mathcal U}\mathcal H_{\Phi\circ\text{Ad}(u_{k-1})\circ\Phi\circ\dots\circ\text{Ad}(u_{1})\circ\Phi}.$$
	
	We fix $u_1,...,u_{k-1}\in\mathcal U$ and denote  $\Psi:=\Phi\circ\text{Ad}(u_{k-1})\circ\Phi\circ\dots\circ\text{Ad}(u_{1})\circ\Phi:M\rightarrow M$. 
Thus, in order to prove the claim it suffices to argue that	$\mathcal H_{\Psi}\subset_{\text{weak}}L^2(M)\otimes_D L^2(M)$.
	 To this end, for  $i\in\{1,...,k-1\}$, write $u_i=(v_i\otimes w_i)u_{h_i}$, where $v_i\in\mathcal U(C), w_i\in\mathcal U(D)$ and $h_i\in\Gamma$. We define $U=(1\otimes w_{k-1})u_{h_{k-1}}...(1\otimes w_1)u_{h_1}\in\mathcal U(D\rtimes\Gamma)$ and a positive definite function $\psi:\Gamma\rightarrow\mathbb C$ by letting $$\text{$\psi(g)=\prod_{i=1}^{k-1}\tau(v_i\rho_{h_i...h_1gh_1^{-1}...h_i^{-1}}(v_i)^*)$, for all $g\in\Gamma$.}$$
	By using \eqref{Phi} and induction, it follows that for all $c\in C$, $d\in D$ and $g\in\Gamma$ we have that \begin{equation}\label{psi}\text{$\Psi((c\otimes d)u_g)=\tau(c)\psi(g)\varphi(g)\prod_{i=1}^{k-1}\varphi(h_i...h_1gh_1^{-1}...h_i^{-1})\text{Ad}(U)((1\otimes d)u_g)$}\end{equation}
	Let $\Theta:M\rightarrow  M$ and $\Omega:M\rightarrow M$ be the completely positive maps given by $\Theta(xu_g)=\psi(g)xu_g$ and $\Omega(xu_g)=\varphi(g)\prod_{i=1}^{k-1}\varphi(h_i...h_1gh_1^{-1}...h_i^{-1})xu_g$, for all $x\in C\bar{\otimes}D$ and $g\in\Gamma$. Then \eqref{psi} rewrites as $\Psi=\text{Ad}(U)\circ\Theta\circ\Omega\circ E_{D\rtimes\Gamma}$. By Lemma \ref{compose}(1) we get that \begin{equation}\label{inclusion}\text{the $M$-bimodule $\mathcal H_{\Psi}$ is isomorphic to a sub-bimodule of $\mathcal H_{E_{D\rtimes\Gamma}}\otimes_M\mathcal H_{\Omega}\otimes_M\mathcal H_{\Theta}$.}\end{equation}
	
		 Let $\rho_0:\Gamma\rightarrow\mathcal U(L^2(C)\ominus\mathbb C1)$ be the restriction of the Koopman representation of $\rho$ to $L^2(C)\ominus\mathbb C1$.
	  Since  $\varphi(g)=\langle\rho_g(\xi),\xi\rangle=\prod_{i=1}^n\langle\rho_g(\xi_i),\xi_i\rangle$ and $\xi_i\in C\ominus\mathbb C1$, for all $g\in\Gamma$ and $i\in\{1,...,n\}$, it follows that the $M$-bimodule $\mathcal H_{\Omega}$ is isomorphic to a sub-bimodule of $\mathcal K_{\rho_0^{\otimes kn}}$.
	  Since $\pi^{\otimes k}$ is weakly contained in the left regular representation $\lambda$, so is $\rho_0^{\otimes k}$ by Lemma \ref{weak}. Thus, $\rho_0^{\otimes kn}$ is weakly contained in $\lambda$. Hence, $\mathcal K_{\rho_0^{\otimes kn}}\subset_{\text{weak}}\mathcal K_{\lambda}\cong \mathcal H_{E_{C\bar{\otimes}D}}$. Altogether, we conclude that $\mathcal H_{\Omega}\subset_{\text{weak}}\mathcal H_{E_{C\bar{\otimes}D}}$.  In combination with \eqref{inclusion}, we derive that \begin{equation}\label{weakinc}\mathcal H_{\Psi}\subset_{\text{weak}}\mathcal H_{E_{D\rtimes\Gamma}}\otimes_M\mathcal H_{E_{C\bar{\otimes}D}}\otimes_M\mathcal H_{\Theta}.\end{equation}
	  
	  Since $\mathcal H_{E_N}\cong L^2(M)\otimes_NL^2(M)$, for any von Neumann subalgebra $N\subset M$, and the $(D\rtimes\Gamma)$-$(C\bar{\otimes}D)$-bimodule $L^2(M)$ is isomorphic to $L^2(D\rtimes\Gamma)\otimes_{D}L^2(C\bar{\otimes}D)$, it follows that $\mathcal H_{\Psi}\subset_{\text{weak}}L^2(M)\otimes_D\mathcal H_{\Theta}$.
	  Using that $D$ is regular in $M$ and $\Theta_{|D}=\text{id}_D$, it is easy to see that $L^2(M)\otimes_D\mathcal H_{\Theta}$ is isomorphic to a sub-bimodule of a multiple of $L^2(M)\otimes_DL^2(M)$. Thus, $\mathcal H_{\Psi}\subset_{\text{weak}}L^2(M)\otimes_D L^2(M)$, which finishes the proof of the claim.
	  \hfill$\square$

Since $L^2(\tilde M)\ominus L^2(M)$ decomposes as a direct sum of $M$-bimodules of the form $\mathcal L_{\xi}$ as in the claim, condition (1) follows.
 To verify condition (2), let $\xi\in \tilde C$  be a non-zero element of the form $\xi=\xi_1\xi_2...\xi_n$, where $\xi_1\in 1\ast (C\ominus\mathbb C1),\xi_2\in (C\ominus\mathbb C1)\ast 1,...,\xi_n\in (C\ominus\mathbb C1)\ast 1$. Using a calculation similar to the one in the claim, it follows that the $M$-bimodule $_M\overline{M\xi\alpha_1(M)}_{\alpha_1(M)}$ is isomorphic to a submodule of a multiple of $L^2(M)\otimes_{D\rtimes\Gamma}L^2(M)$. This implies that condition (2) holds in case (2) and finishes the proof of Corollary \ref{embed}.
\end{proof}

\section{Proofs of Theorems \ref{A} and \ref{B}}

The proofs of Theorems \ref{A} and \ref{B} rely on the following consequence of Corollary \ref{embed}.

\begin{lemma}\label{technical}
Let $\Gamma$ be a non-amenable group. For $k\in\mathbb N$, let $\pi_k:\Gamma\rightarrow\mathcal O(\mathcal H_k)$ be an orthogonal representation such that $\pi_k^{\otimes l(k)}$ is weakly contained in the left regular representation of $\Gamma$, for some $l(k)\in\mathbb N$. Let $\Gamma\curvearrowright (B_k,\tau_k)$ be either the Gaussian or the free Bogoljubov action associated to $\pi_k$. Let $\Gamma\curvearrowright (B,\tau):=\bar{\otimes}_{k}(B_k,\tau_k)$ be the diagonal product action and denote $M=B\rtimes\Gamma$.
Let $(M_n)_{n\in\mathbb N}$ be a sequence of von Neumann subalgebras  of $M$ such that $\|x-E_{M_n}(x)\|_2\rightarrow 0$, for every $x\in M$.

Then there exist projections  $p_n\in\mathcal Z(M_n'\cap M)$, for $n\in\mathbb N$, such that  $\lim_{n\rightarrow\infty}\tau(p_n)=1$ and $(M_n'\cap M)p_n\prec_{M}^s(\bar{\otimes}_{k>N}B_k)\rtimes\Gamma$, for every $n,N\in\mathbb N$.

Moreover, if $\Gamma$ is not inner amenable, then there exist projections $r_n\in\mathcal Z(M_n'\cap M)$, for $n\in\mathbb N$, such that $\lim_{n\rightarrow\infty}\tau(r_n)=1$ and $(M_n'\cap M)r_n$ is amenable, for every $n\in\mathbb N$.
\end{lemma}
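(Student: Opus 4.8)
\emph{Overview of the plan.} I would derive both assertions from Corollary~\ref{embed} by an exhaustion argument, and then pass to the limit in $N$ via Lemmas~\ref{union} and~\ref{generalamen}. First record the general fact that, since $\|x-E_{M_n}(x)\|_2\to 0$ for every $x\in M$, one has $\|[x,y]\|_2\le 2\|x-E_{M_n}(x)\|_2\to 0$ for $y\in(M_n'\cap M)_1$, so that $\prod_{\omega}(M_n'\cap M)\subset M'\cap M^{\omega}$. Now fix $n$. For each $N$ write $M=(C_N\otb D_N)\cross\Gamma$ with $C_N=\otb_{k\le N}B_k$ and $D_N=\otb_{k>N}B_k$. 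In the Gaussian case $C_N$ is the Gaussian algebra of $\oplus_{k\le N}\pi_k$; in both cases the Koopman representation of $\Gamma\actson C_N$ on $L^2(C_N)\ominus\C1$, and the corresponding deformation representation, have a tensor power weakly contained in the left regular representation $\lambda$ — by \cite{PS09}, Lemma~\ref{weak} and a pigeonhole over the finitely many coordinates $k\le N$. Hence the proof of Corollary~\ref{embed} applies with $C:=C_N$, $D:=D_N$, $Q:=M_n$ and yields the dichotomy: \emph{either} $M_np$ is amenable relative to $D_N$ inside $M$ for some non-zero projection $p\in M_n'\cap M$, \emph{or} $M_n'\cap M\prec_M^s D_N\cross\Gamma$.

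\emph{Construction of $p_n$.} Let $z_{n,N}$ be the largest projection in $M_n'\cap M$ with $M_nz_{n,N}$ amenable relative to $D_N$ inside $M$; it exists by the standard closure properties of relative amenability (Lemma~\ref{union}), and it lies in $\mathcal{Z}(M_n'\cap M)$ because a unitary $u\in M_n'\cap M$ commutes with $M_n$, so $M_n(uz_{n,N}u^*)\cong M_nz_{n,N}$ is still amenable relative to $D_N$, forcing $uz_{n,N}u^*=z_{n,N}$. Applying the corner version of Corollary~\ref{embed} below $1-z_{n,N}$ gives $(M_n'\cap M)(1-z_{n,N})\prec_M^s D_N\cross\Gamma$. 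Moreover $\tau(z_{n,N})\to 0$ as $n\to\infty$: otherwise, along a subsequence with $\tau(z_{n,N})\ge\delta>0$, Lemma~\ref{union} applied with $P_n=M_n$ and $p_n=z_{n,N}$ (using $E_{M_n}\to\id$) would produce a non-zero projection in $M'\cap M=\C1$, hence $1$, with $M$ amenable relative to $D_N$, which is impossible since $\Gamma$ is non-amenable and the Koopman representation of $\Gamma\actson C_N$ has no almost-invariant vectors (and, in the free Bogoljubov case, $C_N$ is non-amenable), so that $(C_N\otb D_N)\cross\Gamma$ admits no $\Gamma$-central state on $\langle M,e_{D_N}\rangle=(\mathbb B(L^2(C_N))\otb D_N)\cross\Gamma$. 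Since $D_{N+1}\subset D_N$ we may take $z_{n,N}$ decreasing in $N$. Put $p_n:=1-z_{n,n}\in\mathcal{Z}(M_n'\cap M)$. Then $\tau(p_n)\ge 1-\tau(z_{n,1})\to 1$, and $(M_n'\cap M)p_n\prec_M^s D_N\cross\Gamma$ for every $N$: for $N\le n$ this follows from $D_n\cross\Gamma\subset D_N\cross\Gamma$, and for $N>n$ from $p_n\le 1-z_{n,N}$, so that $(M_n'\cap M)p_n$ is a central direct summand of $(M_n'\cap M)(1-z_{n,N})\prec_M^s D_N\cross\Gamma$.

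\emph{The moreover part.} Suppose $\Gamma$ is not inner amenable. Since $(M_n'\cap M)p_n\prec_M^s D_N\cross\Gamma$, it is amenable relative to $D_N\cross\Gamma$ inside $M$ for every $N$. Apply Lemma~\ref{generalamen} with $Q=L(\Gamma)$, $Q_N=D_N\cross\Gamma$, $M_N=C_N\cross\Gamma$: condition~(2) holds as $C_N\cross\Gamma\nearrow M$, and condition~(1) is the crossed-product analogue of case~(b) following Lemma~\ref{generalamen}, namely ${}_{Q_N}L^2(M)_{M_N}\cong{}_{Q_N}L^2(Q_N)\ot_{L(\Gamma)}L^2(M_N)_{M_N}$, which holds since the actions on the tensor factors $C_N,D_N$ commute and $\Gamma$ acts diagonally. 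Hence $(M_n'\cap M)p_n$ is amenable relative to $L(\Gamma)$ inside $M$. On the other hand, $\prod_{\omega}(M_n'\cap M)\subset M'\cap M^{\omega}\subset B^{\omega}$ by \cite{Ch82} (using non-inner-amenability), so $\sup_{x\in(M_n'\cap M)_1}\|x-E_B(x)\|_2\to 0$ as $n\to\infty$, whence a standard corner-intertwining argument yields central projections $q_n\in\mathcal{Z}(M_n'\cap M)$ with $\tau(q_n)\to 1$ and $(M_n'\cap M)q_n\prec_M^s B$. Set $r_n:=p_n\wedge q_n\in\mathcal{Z}(M_n'\cap M)$, so $\tau(r_n)\to 1$ and $(M_n'\cap M)r_n$ is both amenable relative to $L(\Gamma)$ and $\prec_M^s B$. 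Finally, since the $L(\Gamma)$-$B$-bimodule ${}_{L(\Gamma)}L^2(M)_B$ is coarse (via $\delta_g\ot\xi\mapsto\delta_g\ot\sigma_{g^{-1}}(\xi)$ on $\ell^2(\Gamma)\ot L^2(B)=L^2(M)$), a von Neumann subalgebra of $B$ that is amenable relative to $L(\Gamma)$ inside $M$ is amenable; transferring this along $(M_n'\cap M)r_n\prec_M^s B$ shows that $(M_n'\cap M)r_n$ is amenable, as required.

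\emph{Main obstacle.} The genuinely delicate points are: (i) that $M$ has no non-zero corner amenable relative to $D_N$ — this is precisely where the representation-theoretic hypotheses are used, through the non-amenability of $\Gamma$ and the absence of (almost-)invariant vectors for the relevant Koopman representations; and (ii) in the moreover part, transporting relative amenability from $D_N\cross\Gamma$ down to $L(\Gamma)$ through Lemma~\ref{generalamen}, and then eliminating $L(\Gamma)$ using non-inner-amenability together with the coarseness of ${}_{L(\Gamma)}L^2(M)_B$. The combinatorial bookkeeping producing a single $p_n$ valid for all $N$ (decreasing $z_{n,N}$, restriction to direct summands, enlarging the target) is routine but must be handled with care.
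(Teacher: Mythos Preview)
Your construction of $p_n$ has a genuine gap in the free Bogoljubov case. You want to apply Corollary~\ref{embed} (or its proof) with $C:=C_N=\bar\otimes_{k\le N}B_k$, but in the free Bogoljubov setting $C_N$ is a \emph{tensor product} of free Araki--Woods factors, not the free Bogoljubov algebra of a single representation (unlike the Gaussian case, where $A_{\pi_1}\bar\otimes A_{\pi_2}\cong A_{\pi_1\oplus\pi_2}$). Neither the statement of Corollary~\ref{embed} nor its proof covers this: the s-malleable deformation in Case~2 is built from $\tilde C=C*C$ for a \emph{single} free Bogoljubov algebra $C$, and the entire bimodule analysis (the description of $\mathcal L_\xi$, the completely positive maps $\Phi,\Psi,\Theta,\Omega$, and the use of Lemma~\ref{weak}) is specific to that setup. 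Using instead the product deformation $\tilde C_N=\bar\otimes_{k\le N}(B_k*B_k)$ would require redoing the whole of Case~2 from scratch; your appeal to ``the proof applies'' does not cover this.

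The paper's route avoids the problem entirely by working one coordinate at a time: with $p_n=1-q_n$ where $q_n$ is the maximal projection with $M_nq_n$ amenable relative to $B$ (a single choice, independent of $N$), one applies Corollary~\ref{embed} with $C=B_i$ and $D=\bar\otimes_{k\ne i}B_k$ for each $i$ separately---here $C$ really is a single Gaussian or free Bogoljubov algebra---to get $(M_n'\cap M)p_n\prec_M^s(\bar\otimes_{k\ne i}B_k)\rtimes\Gamma$, and then intersects over $i\le N$ via \cite[Lemma~2.8(2)]{DHI16}. Your moreover argument (descending to $L(\Gamma)$ via Lemma~\ref{generalamen}, then killing $L(\Gamma)$ through the coarseness of $_{L(\Gamma)}L^2(M)_B$) is an interesting alternative to the paper's path---which instead intersects with $B$ to reach $\prec_M^s\bar\otimes_{k>N}B_k$ and then applies Lemma~\ref{generalamen} with $Q=\mathbb C$---but your final ``transferring along $\prec_M^s B$'' step is not quite right as phrased: $(M_n'\cap M)r_n$ is not a subalgebra of $B$, so you must combine the two relative amenabilities (rel.\ $L(\Gamma)$ from your Lemma~\ref{generalamen} step, rel.\ $B$ from $\prec_M^s B$) via the coarseness of $_{L(\Gamma)}L^2(M)_B$ to conclude amenability relative to $\mathbb C$, rather than first passing to a subalgebra of $B$.
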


{\it Proof.}
Let $q_n\in\mathcal Z(M_n'\cap M)$ be the largest projection such that $M_nq_n$ is amenable relative to $B$.
We claim that $\tau(q_n)\rightarrow 0$. Otherwise, after replacing $(M_n)_{n\in\N}$ with a subsequence, we may assume that $\tau(q_n)\rightarrow c>0$. By Lemma \ref{union}, this implies that there is a non-zero projection $q\in\mathcal Z(M)$ such that $Mq$ is amenable relative to $B$. Since $M$ is a factor, this would give that $M$ is amenable relative to $B$ and hence that $\Gamma$ is amenable by \cite[Proposition 2.4]{OP07}, which is a contradiction.
 
 Next,  fix $n\in\mathbb N$ and put $p_n=1-q_n$. Then $M_np'$ is not amenable relative to $B$, for any non-zero projection $p'\in (M_n'\cap M)p_n$. Otherwise, \cite[Lemma 2.6(2)]{DHI16} would provide a non-zero projection $z\in\mathcal Z(M_n'\cap M)p_n$ such that $M_nz$ is amenable relative to $B$, contradicting the maximality of $q_n$. Let $i\in\mathbb N$ and denote $C_i=\bar{\otimes}_{k\not=i}B_k$.
Since $C_i\subset B$, $M_np'$ is not amenable relative to $C_i$, for any non-zero projection $p'\in (M_n'\cap M)p_n$.  
Since $\Gamma\curvearrowright B_i$ is either the Gaussian or the free Bogoljubov action associated to $\pi_i$, and a multiple of $\pi_i$ is weakly contained in the left regular representation of $\Gamma$, we can apply Corollary \ref{embed}  to the inclusion $M_np_n\subset M=(B_i\bar{\otimes}C_i)\rtimes\Gamma$ to deduce that
\begin{equation}\label{C_i}\text{$(M_n'\cap M)p_n\prec_{M}^sC_i\rtimes\Gamma$, for all $i\in\mathbb N$. }\end{equation}

Let $N\in\mathbb N$. Since the subalgebras $\{C_i\}_{i=1}^N$ of $M$ are regular and any two form a commuting square, \eqref{C_i} and \cite[Lemma 2.8(2)]{DHI16} imply that $(M_n'\cap M)p_n\prec_{M}^s\cap_{i=1}^N(C_i\rtimes\Gamma)=(\bar{\otimes}_{k>N}B_k)\rtimes\Gamma$. Since $\tau(p_n)\rightarrow 1$, this proves the main assertion.

For the moreover assertion, assume that $\Gamma$ is not inner amenable.  Then by \cite{Ch82} we get that $M'\cap M^{\omega}\subset B^{\omega}$ and hence $\prod_{\omega}(M_n'\cap M)\subset M'\cap M^{\omega}\subset B^{\omega}.$  By combining this with  \cite[Lemmas 2.2 and 2.3]{IS18} we can find projections $q_n\in\mathcal Z(M_n'\cap M)$ such that $\tau(q_n)\rightarrow 1$ and \begin{equation}\label{M_n}
\text{$(M_n'\cap M)q_n\prec_{M}^sB$, for every $n\in\mathbb N$.}
\end{equation}
Put $r_n=p_n\wedge q_n\in\mathcal Z(M_n'\cap M)$. Then $(M_n'\cap M)r_n\prec_{M}^sB$ and $(M_n'\cap M)r_n\prec_{M}^s(\bar{\otimes}_{k>N}B_k)\rtimes\Gamma$, for every $n,N\in\mathbb N$. Since  $B$ is regular in $M$, $B$ and $(\bar{\otimes}_{k>N}B_k)\rtimes\Gamma$ form a commuting square and $B\cap ((\bar{\otimes}_{k>N}B_k)\rtimes\Gamma)=\bar{\otimes}_{k>N}B_k$, \cite[Lemma 2.8(2)]{DHI16} implies that \begin{equation}\label{strong}
\text{$(M_n'\cap M)r_n\prec_M^s\bar{\otimes}_{k>N}B_k$, for every $n,N\in\mathbb N$.}
\end{equation}
For $N\in\mathbb N$, put $Q_N=\bar{\otimes}_{k>N}B_k$ and $R_N=(\bar{\otimes}_{k\leq N}B_k)\rtimes\Gamma$. Then $\|x-E_{R_N}(x)\|_2\rightarrow 0$, for any $x\in M$, and $_{Q_N}L^2(M)_{R_N}\cong$ $_{Q_N}L^2(Q_N)\otimes L^2(R_N)_{R_N}$, for any $N\in\mathbb N$. These facts and \eqref{strong} imply that we can apply Lemma \ref{generalamen} to deduce that $(M_n'\cap M)r_n$ is amenable, for every $n\in\mathbb N$.
\hfill$\blacksquare$

\subsection{Proof of Theorem \ref{A}} Assume by contradiction that $M$ admits a residual sequence $(A_n)_n$. For $n\in\mathbb N$, let $M_n=A_n'\cap M$. 
Since $\prod_{\omega}A_n\subset \cap_nA_n^{\omega}\subset M'\cap M^{\omega}$, Lemma \ref{increase} implies that $\|x-E_{M_n}(x)\|_2\rightarrow 0$, for every $x\in M$.
 By Lemma \ref{technical} we can find projections $p_n\in\mathcal Z(M_n'\cap M)$ such that $\tau(p_n)\rightarrow 1$ and $(M_n'\cap M)p_n\prec_{M}^s(\bar{\otimes}_{l>N}B_l)\rtimes\Gamma$, for every $n,N\in\mathbb N$.
 Since $A_n\subset M_n'\cap M$, we thus get that \begin{equation}\label{A_n}
\text{$A_np_n\prec_{M}^s(\bar{\otimes}_{l>N}B_l)\rtimes\Gamma$, for every $n,N\in\mathbb N$.}
\end{equation}
 
Let $n\in\mathbb N$ be fixed such that $\tau(p_n)>15/16$. Recall that $\Gamma\curvearrowright B_k$ is the Gaussian action associated to $\pi_k$
 and denote 
 $U_k^m=\omega(\xi_k^m)\in\mathcal U(B_k)$, for every $k,m\in\mathbb N$.

{\bf Claim}. There exists $k\in\mathbb N$ such that
$\|U_k^m-E_{A_n}(U_k^m)\|_2\leq 1/16$, for every $m\in\mathbb N$.

{\it Proof of the claim.}
Assuming the claim is false,  for every $k\in\mathbb N$, we can find $m(k)\in\mathbb N$ such that $U_k:=U_k^{m(k)}\in\mathcal U(B_k)$ satisfies $\|U_k-E_{A_n}(U_k)\|_2>1/16$.
Since $1-e^{-t}\leq t$, for any $t\geq 0$, we get \begin{align*}\|u_gU_ku_g^*-U_k\|_2&=\|\omega(\pi_k(g)(\xi_k^{m(k)}))-\omega(\xi_k^{m(k)})\|_2\\&=\sqrt{2(1-\exp(-\|\pi_k(g)(\xi_k^{m(k)})-\xi_k^{m(k)}\|^2))}\\&\leq\sqrt{2}\|\pi_k(g)(\xi_k^{m(k)})-\xi_k^{m(k)}\|, \;\text{for every $g\in\Gamma$.}\end{align*}
Since $\sup_{m\in\mathbb N}\|\pi_k(g)(\xi_k^m)-\xi_k^m\|\rightarrow 0$, we deduce that $\|u_gU_ku_g^*-U_k\|_2\rightarrow 0$, for every $g\in\Gamma$. Since $U_k\in\mathcal U(B_k)$, we also have that $U_kx=xU_k$, for every $x\in B$. By combining the last two facts we get that $U:=(U_k)\in M'\cap M^{\omega}$. However, since $\|U-E_{A_n^{\omega}}(U)\|_2=\lim_{k\rightarrow\omega}\|U_k-E_{A_n}(U_k)\|_2\geq 1/16$, this contradicts that $M'\cap M^{\omega}\subset A_n^{\omega}$. Altogether, this proves the claim. \hfill$\square$

Let $k\in\mathbb N$ be as in the claim and put $V_m=U_k^m-\tau(U_k^m)$. Then we have $V_m\in B_k$, $\|V_m\|\leq 2$, $\|V_m\|_2=\sqrt{1-\exp(-1)}$ and $\|V_m-E_{A_n}(V_m)\|_2\leq 1/16$, for every $m\in\mathbb N$. Since $\tau(V_{m'}^*V_m)=0$, for all $m\not=m'$, we also have that $V_m\rightarrow 0$ weakly.

By specializing \eqref{A_n} to $N=k$ we get that $A_np_n\prec_{M}^s(\bar{\otimes}_{l>k}B_l)\rtimes\Gamma$. This implies that we can find a finite dimensional subspace $\mathcal K\subset\bar{\otimes}_{l\leq k}B_l$ such that if $e$ denotes the orthogonal projection from $L^2(M)$ onto the $\|.\|_2$-closed linear span of $\{(y\otimes z)u_g\mid y\in\mathcal K,z\in \bar{\otimes}_{l>k}B_l,g\in\Gamma\}$, then
\begin{equation}\label{e}
\text{$\|x-e(x)\|_2\leq 1/16$, for all $x\in (A_np_n)_1$.}
\end{equation}
Next, if $m\in\mathbb N$, then $\|V_m-E_{A_n}(V_m)\|\leq 1/16$ and hence $\|V_mp_n-E_{A_n}(V_m)p_n\|_2\leq 1/16$.
Since $E_{A_n}(V_m)p_n\in A_np_n$ and $\|E_{A_n}(V_m)p_n\|\leq 2$, \eqref{e} gives $\|E_{A_n}(V_m)p_n-e(E_{A_n}(V_m)p_n)\|_2\leq 1/8$. Combining the last two inequalities further implies that \begin{equation}\label{V_m}\text{$\|V_mp_n-e(V_mp_n)\|_2\leq 1/4$, for every $m\in\mathbb N$.}\end{equation}

Now, we claim that \begin{equation}\label{eq:weak}\text{$\lim_{m\rightarrow\infty}\|E_{(\bar{\otimes}_{l>k}B_l)\rtimes\Gamma}(xV_my)\|_2=0$, for all $x,y\in M$.}\end{equation} Indeed, it is enough to check this when $x=u_g(a\otimes b)$ and $y=(c\otimes d)u_h$, for $a,c\in\bar{\otimes}_{l\leq k}B_l$, $b,d\in \bar{\otimes}_{l>k}B_l$ and $g,h\in\Gamma$. Then, since $V_m\in B_k$, we have  $E_{(\bar{\otimes}_{l>k}B_l)\rtimes\Gamma}(xV_my)=\tau(aV_mb)u_gbdu_h$ and the conclusion follows since $V_m\rightarrow 0$ weakly. This proves \eqref{eq:weak}.

Let $\{\xi_j\}_{j=1}^r$ be an orthonormal basis for $\mathcal K$. Since $E_{(\bar{\otimes}_{l>k}B_l)\rtimes\Gamma}(\xi_i^*\xi_j)=\delta_{i,j}$, for all $i,j\in\{1,...,r\}$, we get that $e(x)=\sum_{j=1}^r\xi_jE_{(\bar{\otimes}_{l>k}B_l)\rtimes\Gamma}(\xi_j^*x)$, for every $x\in M$. In combination with \eqref{eq:weak} it follows that $\|e(V_mp_n)\|_2\rightarrow 0$. On the other hand,  since $\|V_m\|\leq 2$ and $\tau(p_n)>15/16$, we have that \begin{align*}\|V_mp_n\|_2\geq \|V_m\|_2-\|V_m(1-p_n)\|_2&\geq \|V_m\|_2-2\|1-p_n\|_2\\&=\sqrt{1-\exp(-1)}-2\sqrt{1-\tau(p_n)}>1/4,\;\;\text{for every $m\in\mathbb N$}.\end{align*}
Altogether, we get that $\liminf_{m\rightarrow\infty} \|V_mp_n-e(V_mp_n)\|_2>1/4$, which contradicts \eqref{V_m}. So $M$ cannot have a residual sequence.
\qed

\begin{remark}
The proof of Theorem \ref{A}  shows that there is no sequence $(A_n)_{n\in\mathbb N}$ of von Neumann subalgebras of $M$ such that $\prod_{\omega}A_n\subset M'\cap M^{\omega}\subset\cap_{n\in\mathbb N}A_n^{\omega}$. In particular, there is no  sequence $(A_n)_{n\in\mathbb N}$ of von Neumann subalgebras of $M$ which satisfies conditions  (2) and (3) of Definition \ref{mdef}.
\end{remark}

\subsection{Proof of Theorem \ref{B}} Recall that $\Gamma\curvearrowright B_k$ is the free Bogoljubov action associated to $\pi_k$ and denote $W_{k,m}=W(\xi_k^m)\in B_k$, for $k\in\mathbb N$ and $m\in\{1,2\}$. Then for any $k\in\mathbb N$, $\{W_{k,1},W_{k,2}\}$ are freely independent semicircular operators with $\|W_{k,1}\|=\|W_{k,2}\|=2$. Moreover, if $m\in\{1,2\}$, then for any $g\in\Gamma$ we have that $
\|u_gW_{k,m}u_g^*-W_{k,m}\|_2=\|W(\pi_k(g)(\xi_k^m))-W(\xi_k^m)\|_2=\|\pi_k(g)(\xi_k^m)-\xi_k^m\|\rightarrow 0$. Since $W_{k,m}\in B_k$, we also have that $\|W_{k,m}x-xW_{k,m}\|_2\rightarrow 0$, for every $x\in B$. By combining the last two facts, we get that $W_m=(W_{k,m})_k\in M'\cap M^{\omega}$. 

Let us first prove the moreover assertion. To this end, let $P\subset M'\cap M^{\omega}$ be the von Neumann subalgebra generated by $W_1$ and $W_2$. Assume by contradiction that there is a sequence $(A_n)_n$ of von Neumann subalgebras of $M$ such that $$P\subset\prod_{\omega}A_n\subset M'\cap M^{\omega}.$$ 

 For $n\in\mathbb N$, let $M_n=A_n'\cap M$. 
Lemma \ref{increase} implies that $\lim_{n\rightarrow\omega}\|x-E_{M_n}(x)\|_2\rightarrow 0$, for every $x\in M$.
The moreover assertion of Lemma \ref{technical}  implies the existence of projections $r_n\in\mathcal Z(M_n'\cap M)$ such that $\lim_{n\rightarrow\omega}\tau(r_n)\rightarrow 1$ and $(M_n'\cap M)r_n$ is amenable, for every $n\in\mathbb N$. Thus, $A_nr_n$ is amenable, for every $n\in\mathbb N$.

If $n\in\mathbb N$, then since $W_m=(W_{k,m})_k\in P\subset\prod_{\omega}A_k$, there is $k_n\in\mathbb N$ satisfying $\tau(r_{k_n})\geq 1-1/n^2$ and $\|W_{k_n,m}-E_{A_{k_n}}(W_{k_n,m})\|_2\leq 1/n$, for every $m\in\{1,2\}$. Thus, if  $B_n=A_{k_n}r_{k_n}\oplus\mathbb C(1-r_{k_n})$, then \begin{equation}\label{W_n}\text{$\|W_{k_n,m}-E_{B_n}(W_{k_n,m})\|_2\leq 1/n+\|1-r_{k_n}\|_2\leq 2/n$, for every $n\in\mathbb N$ and $m\in\{1,2\}$.} \end{equation}

Let $N$ be the II$_1$ factor generated by two freely independent semicircular operators $S_1,S_2$ with $\|S_1\|=\|S_2\|=2$.
For $n\in\mathbb N$, let $\pi_n:N\rightarrow M$ be the unique trace preserving $*$-homomorphism such that $\pi_n(S_m)=W_{k_n,m}$, for all $m\in\{1,2\}$.
Then \eqref{W_n} gives that $\|\pi_n(x)-E_{B_n}(\pi_n(x))\|_2\rightarrow 0$, for every $x\in N$. Since $B_n$ is amenable, for every $n\in\mathbb N$, Corollary \ref{amenable} implies that $N$ is amenable. Since $N\cong L(\mathbb F_2)$ is not amenable, this gives a contradiction and thus proves the moreover assertion.

To prove the main assertion, assume by contradiction that $M$ admits a residual sequence $(A_n)_n$. Then $P\subset M'\cap M^{\omega}=\cap_nA_n^{\omega}$ and since $P$ is separable, we can find an increasing sequence of positive integers $(k_n)$ such that $P\subset \prod_{\omega}A_{k_n}$. Since $\prod_{\omega}A_{n_k}\subset\cap_nA_n^{\omega}=M'\cap M^{\omega}$, this contradicts the moreover assertion.
\hfill$\blacksquare$

\section{Stability}

\subsection{Proof of Proposition~\ref{C}}
Since $P$ is amenable, it  is aproximately finite dimensional by Connes' theorem \cite{Co76}. Thus, we can find  an increasing sequence $(B_k)_k$ of finite dimensional von Neumann subalgebras such that $P=(\cup_k B_k)''$. If $k\in\N$, then since $B_k$ is finite dimensional, there exists $S_k\in\omega$ such that for every $n\in S_k$ we have an embedding $B_k\subset M_n$ in such a way that the embedding $B_k\subset\prod_\omega M_n$ is the diagonal embedding. Put $S_0=\N$.

{\bf Claim.} There exists a sequence $(k_n)\subset\mathbb N$ such that $n\in S_{k_n}$, for all $n\in\mathbb N$, $\lim_{n\rightarrow\omega}k_n=+\infty$, and $$Q\subset \prod_\omega (B_{k_n}'\cap M_n).$$
{\it Proof of the claim.} Since $B_k$ is finite dimensional, $Q\subset P'\cap \prod_\omega M_n \subset B_k'\cap \prod_\omega M_n = \prod_\omega (B_k'\cap M_n)$, for every $k\in\mathbb N$. Hence $Q\subset \cap_{k\in\N} \prod_\omega (B_k'\cap M_n)$, i.e.
\begin{equation}\label{eq:q}
\text{$\lim_{n\rightarrow\omega} \norm{q_n - E_{B_k'\cap M_n}(q_n)}_2 = 0$, for all $k\in\mathbb N$ and $q=(q_n)\in Q.$}
\end{equation}
Now, let $\{q^{(m)}\}_{m\in\N}$ be a $\norm{.}_2$-dense sequence in $(Q)_1$. Let $X_0=\N$ and 
\[
X_k = \left\{ n\in S_k\mid \norm{q^{(i)}_n - E_{B_k'\cap M_n}(q^{(i)}_n)}_2\leq \frac{1}{k}, \text{ for all } 1\leq i\leq k\right\}.
\]
For $n\in\mathbb N$, define $k_n$ to be the largest $k\leq n$ such that $n\in X_k$. We claim that $\lim_{n\rightarrow\omega}k_n=+\infty$. Otherwise, there exists $k\in\N$ such that $\{n\in\N\mid k_n=k\}\in\omega$. Then $\{n\in\N\mid n\not\in X_{k+1}\}\in\omega$.
Since $S_{k+1}\in\omega$, this would imply the existence of $i\in\{1,\dots,k+1\}$ such that we have $$\left\{n\in\mathbb N\mid\norm{q^{(i)}_n - E_{B_{k+1}'\cap M_n}(q^{(i)}_n)}_2> \frac{1}{k+1}\right\}\in\omega,$$ and thus

\[
\lim_{n\rightarrow\omega} \norm{q^{(i)}_n - E_{B_{k+1}'\cap M_n}(q^{(i)}_n)}_2\geq \frac{1}{k+1},
\]
contradicting \eqref{eq:q}. By construction $Q\subset \prod_\omega (B_{k_n}'\cap M_n)$, which finishes the proof of the claim.\hfill$\square$

Taking $(k_n)$ as in the Claim, we also have that $P\subset\prod_{\omega} B_{k_n}$.
Thus, $P_n=B_{k_n}$ and $Q_n=B_{k_n}'\cap M_n$ verify the conclusion of Proposition~\ref{C}.
\qed

\subsection{Proof of Theorem~\ref{E}} In the proof of Theorem~\ref{E} we will need the following consequence of Corollary~\ref{embed}.
Recall that a tracial von Neumann algebra $(M,\tau)$ is called \textit{solid} \cite{Oz03} if the relative commutant $P'\cap M$ is amenable,  for any diffuse von Neumann subalgebra $P\subset M$. 

\begin{lemma}\label{lem:solid}
	Let $\Gamma$ be a countable group and $\pi:\Gamma\rightarrow\mathcal O(\mathcal H_{\mathbb R})$ be a mixing orthogonal representation. Assume that $\pi^{\otimes k}$ is weakly contained in the left regular representation of $\Gamma$, for some $k\in\mathbb N$. Let $\Gamma\curvearrowright (C,\tau)$ be 
	the free Bogoljubov action associated to $\pi$.
	If $L(\Gamma)$ is solid, then $C\cross \Gamma$ is solid.
\end{lemma}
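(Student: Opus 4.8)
The plan is to combine Corollary \ref{embed} with the assumed solidity of $L(\Gamma)$ via the standard ``intertwining forces amenability'' mechanism. Write $M=C\rtimes\Gamma$ and let $P\subset M$ be a diffuse von Neumann subalgebra; we must show $P'\cap M$ is amenable. Since $C$ is a free group factor (hence diffuse) with $C\rtimes\Gamma$ acting suitably, the natural first dichotomy is to ask whether $(P'\cap M)r$ is amenable relative to $L(\Gamma)$ inside $M$ for some nonzero projection $r\in(P'\cap M)'\cap M$. If so, we shall handle that piece separately; if not — more precisely, if $(P'\cap M)p$ fails to be amenable relative to $L(\Gamma)$ for \emph{every} nonzero $p$ in its own relative commutant — then we are exactly in a position to invoke Corollary \ref{embed}. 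Indeed, in the notation of that corollary take $D=\C 1$, so that $C\bar\otimes D=C$, the diagonal action is just $\Gamma\curvearrowright C$, and $D\rtimes\Gamma=L(\Gamma)$; the hypotheses (that $\pi^{\otimes k}$ is weakly contained in the regular representation, and that $\Gamma\curvearrowright C$ is the free Bogoljubov action associated to $\pi$) are precisely those of the lemma. Applying Corollary \ref{embed} with $Q=P'\cap M$ yields $(P'\cap M)'\cap M\prec_M^s L(\Gamma)$; in particular $P\prec_M^s L(\Gamma)$ since $P\subset(P'\cap M)'\cap M$.

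The second step extracts amenability of $P'\cap M$ from the intertwining $P\prec_M^s L(\Gamma)$ together with solidity of $L(\Gamma)$. This is where the mixing hypothesis on $\pi$ enters: mixingness of $\pi$ forces the free Bogoljubov action $\Gamma\curvearrowright C$ (equivalently its Koopman representation restricted to $L^2(C)\ominus\C1$) to be mixing, which is the input needed to run the standard ``$\prec^s$ into $L(\Gamma)$ plus relative commutant'' argument (as in, e.g., Ozawa--Popa / Chifan--Houdayer type analyses of crossed products by Bogoljubov actions). Concretely, from $P\prec_M^s L(\Gamma)$ one obtains a projection, a partial isometry $v$, and a $*$-homomorphism $\theta$ realizing a corner $\theta(P_0)\subset L(\Gamma)$ with $vx=\theta(x)v$; because the action is mixing, $v^*v\in P_0'\cap M$ can be controlled and one pushes the relative commutant of $P$ into $M$ close to $v^*\big(\theta(P_0)'\cap \langle M,e_{L(\Gamma)}\rangle\big)v$, reducing the amenability of (a corner of) $P'\cap M$ to amenability of $\theta(P_0)'\cap M$. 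Since $\theta(P_0)$ is a diffuse subalgebra of $L(\Gamma)$ (diffuseness is preserved because $\theta$ is injective on the corner of the diffuse algebra $P$), solidity of $L(\Gamma)$ gives that $\theta(P_0)'\cap L(\Gamma)$ is amenable, and then a relative-commutant-in-the-crossed-product computation — using mixingness again to see that elements of $M$ commuting with a diffuse subalgebra of $L(\Gamma)$ already lie in a controllable algebra — upgrades this to amenability of $\theta(P_0)'\cap M$. A standard exhaustion over projections then yields that all of $P'\cap M$ is amenable.

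For the complementary case — where $(P'\cap M)r$ \emph{is} amenable relative to $L(\Gamma)$ for some nonzero central projection $r$ — one combines relative amenability over $L(\Gamma)$ with solidity of $L(\Gamma)$ (amenability of $L(\Gamma)$ itself is not assumed, but relative amenability over an algebra whose diffuse relative commutants are amenable, together with $P$ diffuse, should again force genuine amenability of the relevant corner); alternatively, one may arrange the whole argument so that this case is absorbed: replace $P'\cap M$ by its maximal summand that is amenable relative to $L(\Gamma)$ and apply Corollary \ref{embed} only to the complementary summand, exactly as in the proof of Lemma \ref{technical}. Piecing the two summands together via a maximality/exhaustion argument (as done with the projections $q_n$, $p_n$ there) gives amenability of $P'\cap M$ on all of $1$.

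The main obstacle I anticipate is the second step: turning the intertwining $P\prec_M^s L(\Gamma)$ into amenability of $P'\cap M$. Corollary \ref{embed} is essentially a black box here, but transporting the relative commutant through the partial isometry $v$ and the $*$-homomorphism $\theta$, and in particular controlling $\theta(P_0)'\cap M$ (rather than $\theta(P_0)'\cap L(\Gamma)$), requires the mixing property of the Bogoljubov action in an essential way and is the technically delicate point. The weak-containment-in-the-regular-representation hypothesis and the verification that mixingness of $\pi$ passes to the Koopman representation (via the Fock-space description in Section \ref{gauss}) are comparatively routine, as is the final exhaustion over projections.
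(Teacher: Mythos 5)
Your overall blueprint matches the paper's: apply Corollary~\ref{embed} with $D=\mathbb C 1$ to obtain an intertwining of $P$ into $L(\Gamma)$, then exploit mixingness of the free Bogoljubov action together with solidity of $L(\Gamma)$ to conclude. However, there is a substantive error in how you set up the first step. With $D=\mathbb C1$, the hypothesis of Corollary~\ref{embed} is that $Qp$ is not amenable \emph{relative to $D=\mathbb C1$}, i.e.\ that $Qp$ is not \emph{amenable}, for every nonzero $p\in Q'\cap M$. You instead phrase the dichotomy in terms of amenability \emph{relative to $L(\Gamma)=D\rtimes\Gamma$}, which is the wrong condition (it appears in the \emph{conclusion}, not the hypothesis, of the corollary). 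This matters: your ``complementary case'' --- $(P'\cap M)r$ amenable relative to $L(\Gamma)$ --- is genuinely hard and your proposed argument for it does not work. Relative amenability over a solid algebra does not force amenability (e.g.\ $L(\Gamma)$ itself is amenable relative to $L(\Gamma)$ but not amenable), and your appeal to ``diffuse relative commutants are amenable'' is not a valid deduction from these premises. With the correct reading of the hypothesis, the complementary case is trivial: if $(P'\cap M)p$ is amenable for some nonzero $p\in(P'\cap M)'\cap M$, then by \cite[Lemma~2.6(2)]{DHI16} $P'\cap M$ already has an amenable direct summand, which is all that is needed (amenability of all of $P'\cap M$ then follows by a maximality argument). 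The paper avoids the dichotomy altogether by arguing by contradiction: assume $P'\cap M$ has no amenable direct summand, verify the hypothesis of Corollary~\ref{embed} for $Q=P'\cap M$, and deduce $P\prec_M L(\Gamma)$.

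Two smaller points. From $(P'\cap M)'\cap M\prec_M^s L(\Gamma)$ and $P\subset(P'\cap M)'\cap M$ you may conclude $P\prec_M L(\Gamma)$, but not $P\prec_M^s L(\Gamma)$ as you claim; only the former is needed. Your sketch of the second step is correct in spirit but leaves the delicate part unresolved: the precise mechanism is Popa's mixing result \cite[Theorem~3.1]{Po03}, applied twice --- first to show $vv^*\in\theta(pPp)'\cap qMq\subset qL(\Gamma)q$, then to show $v(P'\cap M)v^*\subset P_0'\cap q_0 L(\Gamma)q_0$, where $P_0=vPv^*$ --- after which solidity of $L(\Gamma)$ finishes the job. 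Pinning this down is exactly the point you flagged as the anticipated obstacle; the cited Popa result is the tool that removes it.
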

\begin{proof}
Assume that $L(\Gamma)$ is solid. In order to prove that $M=C\rtimes\Gamma$ is solid it suffices to show that if  $P\subset M$ is a diffuse von Neumann subalgebra, then $P'\cap M$ has an amenable direct summand. Suppose by contradiction that $P'\cap M$ has no amenable direct summand. 
 By applying Corollary~\ref{embed}, we get that $P\emb_M L(\Gamma)$. Hence there exist projections $p\in P, q\in L(\Gamma)$, a $*$-homomorphism $\theta: pPp\rightarrow qL(\Gamma) q$, and a non-zero partial isometry $v\in qMp$ such that $\theta(x)v=vx$ for all $x\in pPp$. 
Since $\pi$ is mixing, the action $\Gamma\actson C$ is mixing by \cite[Proposition 2.6]{Ho12b}. Since $\theta(pPp)\subset qL(\Gamma)q$ is a diffuse subalgebra and $vv^*\in\theta(pPp)'\cap qMq$, \cite[Theorem 3.1]{Po03} implies that $q_0:=vv^*\in L(\Gamma)$.  
Thus, $P_0:=vPv^*$ is a diffuse subalgebra of $q_0L(\Gamma) q_0$.  Since $v(P'\cap M)v^*\subset q_0Mq_0$ is a subalgebra which commutes with $P_0$,  \cite[Theorem 3.1]{Po03}  gives that $v(P'\cap M)v^*\subset P_0'\cap q_0L(\Gamma)q_0$. Since $L(\Gamma)$ is solid, we get that $v(P'\cap M)v^*$ 
 is amenable and thus $P'\cap M$ has an amenable direct summand. This finishes the proof of the lemma.
 \end{proof}

\begin{proof}[Proof of Theorem~\ref{E}] 
First, note that if $W$ is a self-adjoint operator in a tracial von Neumann algebra whose distribution with respect to the trace is the semicircular law supported on $[-2,2]$, then  $\{W\}''$ is a diffuse abelian von Neumann algebra. Hence we can find a Borel function $f:[-2,2]\rightarrow\mathbb T$ such that $U=f(W)\in\{W\}''$ is a Haar unitary, i.e. $\tau(U^n)=0$, for all $n\in\mathbb Z\setminus\{0\}$. From now on, fix two freely independent self-adjoint operators $W_1,W_2$
in a tracial von Neumann algebra whose distribution is the semicircular law supported on $[-2,2]$. Define $U_1= f(W_1)$ and $U_2=f(W_2)$. Then $U_1$ and $U_2$ are freely independent Haar unitaries and thus $N=\{U_1,U_2\}''$ satisfies  $N=\{U_1\}''*\{U_2\}''\cong L(\mathbb F_2)$. 

Let $\Gamma=\mathbb F_2$ and $a_1,a_2\in\Gamma$ be free generators.
Let $\pi_k:\Gamma\rightarrow\cO(\mathcal H_k)$, $k\in\mathbb N$, be a sequence of mixing representations such that a tensor multiple of $\pi_k$ is weakly contained in the left regular representation of $\Gamma$, and there exist unit vectors $\xi_{k,m}\in\mathcal H_k$ such that $\|\pi_k(g)(\xi_k^m)-\xi_k^m\|\rightarrow 0$, for every $m\in\{1,2\}$ and $g\in\Gamma$. For instance, let $(\pi_k)_{k\in\N}$ be as in Example \ref{free} and notice that by construction $\pi_k$ is indeed mixing, for every $k\in\N$.
Let $\Gamma\actson B_k$ be the free Bogoljubov action associated to $\pi_k$ and denote $M_k=B_k\rtimes\Gamma$, for every $k\in\mathbb N$.

  Then $W_{k,m}=W(\xi_k^m)\in B_k$ is a self-adjoint operator whose distribution is the semicircular law supported on $[-2,2]$.  Moreover, $\|u_g W_{k,m}- W_{k,m}u_g\|_2=\|\pi_k(g)(\xi_k^m)-\xi_k^m\|\rightarrow 0$, for every $m\in\{1,2\}$ and $g\in\Gamma$. 
Thus, if we put $U_{k,m}=f(W_{k,m})\in\mathcal U(B_k)$, then \begin{equation}\label{comm}\text{$\|u_gU_{k,m}-U_{k,m}u_g\|_2\rightarrow 0$, for every $m\in\{1,2\}$ and $g\in\Gamma$.}\end{equation}

 Let $\rho_k:N\rightarrow M_k$ be the unique trace preserving $*$-homomorphism given by $\rho_k(U_1)=U_{k,1}$ and $\rho_k(U_2)=U_{k,2}$. Then \eqref{comm} rewrites as \begin{equation}\label{comm2}\text{$\|u_g\rho_k(U_m)-\rho_k(U_m)u_g\|_2\rightarrow 0$, for every $m\in\{1,2\}$ and $g\in\Gamma$.}\end{equation}

In the rest of the proof, we treat the two assertions of Theorem~\ref{E} separately.

{\it Part 1.} We first prove that $\Gamma\times\Gamma$ is not W$^*$-tracially stable. This readily implies that $\mathbb F_l\times\mathbb F_m$ is not W$^*$-tracially stable, for every $2\leq l,m\leq +\infty$.  Assume by contradiction that $\Gamma\times\Gamma$ is W$^*$-tracially stable.
Using \eqref{comm2} we can define a homomorphism $\varphi:\Gamma\times\Gamma\rightarrow\mathcal U(\prod_{\omega}M_k)$
by letting \begin{equation}\label{varphi}\text{$\varphi(a_m,e)=(\rho_k(U_m))_k$ and $\varphi(e,g)=u_g$, for all $m\in\{1,2\}$ and $g\in\Gamma$.}\end{equation}

Since $\Gamma\times\Gamma$ is assumed W$^*$-tracially stable, there must be homomorphisms $\varphi_k:\Gamma\times\Gamma\rightarrow\mathcal U(M_k)$ such that $\varphi=(\varphi_k)_k$. Let $C_k=\varphi_k(\Gamma\times\{e\})''$ and $D_k=\varphi_k(\{e\}\times\Gamma)''$. Then $C_k$ and $D_k$ are commuting von Neumann subalgebras of $M_k$ and  we have that \begin{equation}\label{C_k}\text{$\lim_{k\rightarrow\omega}\|\rho_k(U_m)-E_{C_k}(\rho_k(U_m))\|_2=0$, for every $m\in\{1,2\}$, and} \end{equation}
\begin{equation}\label{D_k}\text{$\lim_{k\rightarrow\omega}\|u_g-E_{D_k}(u_g)\|_2=0$, for every $g\in\Gamma$.}\end{equation}

Then \eqref{C_k} implies that $\lim\limits_{k\rightarrow\omega}\|\rho_k(x)-E_{C_k}(\rho_k(x))\|_2\rightarrow 0$, for every $x\in N$.
Since $N$ is a non-amenable II$_1$ factor,  Corollary \ref{amenable} implies that if $p_k\in\mathcal Z(C_k)$ is the largest projection such that $C_kp_k$ is amenable, then $\lim_{k\rightarrow\omega}\tau(p_k)=0$. 
Since $L(\Gamma)$ is also a non-amenable II$_1$ factor, by repeating this argument using $\eqref{D_k}$, it follows that $\lim_{k\rightarrow\omega}\tau(q_k)=0$, where $q_k\in\mathcal Z(D_k)$ denotes the largest projection such that $D_kq_k$ is amenable. 
Thus, for every $k\in\N$, $r_k:=(1-p_k)(1-q_k)\in\{C_k,D_k\}''$ is a projection such that $C_kr_k$ and $D_kr_k$ have no amenable direct summands, and $\lim_{k\rightarrow\omega}\tau(r_k)=1$.
In particular, we can find $k$ such that $r_k\not=0$. This implies that $r_kMr_k$ and thus $M$ is not solid, which is a contradiction by Lemma \ref{lem:solid}. This finishes the proof of the first assertion of Theorem~\ref{E}.

{\it Part 2.} For the moreover assertion, put $B=\bar{\otimes}_{k\in\mathbb N}B_k$ and $M=B\cross\Gamma$. Using the natural embeddings $M_k\subset M$, for every $k\in\mathbb N$, we can view $\prod_{\omega}M_k$ as a subalgebra of $M^{\omega}$. Thus, we may view $\varphi$ as a homomorphism $\varphi:\Gamma\times\Gamma\rightarrow\mathcal U(M^{\omega})$. 
	Since by the definition \eqref{varphi} of $\varphi$ we have $\varphi(a,e)\in B^{\omega}$, $\tau(\varphi(a,e))=\delta_{a,e}$ and $\varphi(e,g)=u_g$, it follows that $\tau(\varphi(a,g))=\tau(\varphi(a,e)u_g)=\delta_{(a,g),(e,e)}$, for all $a,g\in\Gamma$. Thus, $\varphi$ extends to a $*$-homomorphism $\varphi:L(\mathbb F_2\times\mathbb F_2)\rightarrow M^{\omega}$.

	We claim that there are no homomorphisms $\varphi_k:\Gamma\times\Gamma\rightarrow\mathcal U(M)$ such that $\varphi=(\varphi_k)_k$.
Assume by contradiction that such homomorphisms $(\varphi_k)$ exist. Then $C_k=\varphi_k(\Gamma\times\{e\})''$ and $D_k=\varphi_k(\{e\}\times\Gamma)''$ are commuting von Neumann subalgebras of $M$ such that \eqref{C_k} and \eqref{D_k} hold.
	
	 Since $\Gamma$ is non-amenable, \cite[Proposition 2.4]{OP07} implies that $L(\Gamma)$ is not amenable relative to $B$ inside $M$. Thus, since $L(\Gamma)'\cap M=\mathbb C1$, there is no non-zero projection $q\in L(\Gamma)'\cap M$ such that $L(\Gamma)q$ is amenable relative to $B$ inside $M$.
Let $q_k\in D_k'\cap M$ be the largest projection such that $D_kq_k$ is amenable relative to $B$ inside $M$. Then by \cite[Lemma 2.6]{DHI16} we have that $q_k\in\mathcal Z(D_k'\cap M)$. Since by \eqref{D_k} we have that $\lim_{\omega}\|x-E_{D_k}(x)\|_2=0$, for every $x\in L(\Gamma)$, we can apply Lemma \ref{union} to conclude that $\lim_{\omega}\tau(q_k)=0$.
	
Next, fix $k\in\mathbb N$. Then $D_kp'$ is not amenable relative to $B$ inside $M$, for any non-zero projection $p'\in (D_k'\cap M)(1-q_k)$. 
For $i\in\mathbb N$, let $R_i=\bar{\otimes}_{l\not= i}B_l$. Then by applying Corollary \ref{embed} to the decomposition $M=(B_i\bar{\otimes}R_i)\rtimes\Gamma$ it follows that $C_k(1-q_k)\prec_{M}^{s}R_i\rtimes\Gamma$, for every $i\in\mathbb N$. If $N\in\mathbb N$, then the subalgebras $\{R_i\rtimes\Gamma\}_{i=1}^N$ of $M$ are regular and any two form a commuting square. Since $\cap_{i=1}^N(R_i\rtimes\Gamma)=(\bar{\otimes}_{l>N}B_l)\rtimes\Gamma$, \cite[Lemma 2.8(2)]{DHI16} implies that \begin{equation}\label{cornerI}\text{$C_k(1-q_k)\prec_{M}^{s}(\bar{\otimes}_{l>N}B_l)\rtimes\Gamma$, for every $k, N\in\mathbb N$}.
	\end{equation}

Since $\Gamma=\langle a_1,a_2\rangle$ is not inner amenable, we can find a constant $c>0$ such that \begin{equation}\label{inner}\text{$\|x-E_B(x)\|_2\leq c(\|[x,u_{a_1}]\|_2+\|[x,u_{a_2}]\|_2)$, for every $x\in M$.}\end{equation} For $k\in\mathbb N$, denote $\varepsilon_k=\|u_{a_1}-E_{D_k}(u_{a_1})\|_2+\|u_{a_2}-E_{D_k}(u_{a_2})\|_2$. Then \eqref{D_k} implies that $\lim_{\omega}\varepsilon_k=0$. Since $C_k$ and $D_k$ commute, we have that $\|[x,u_{a_1}]\|_2+\|[x,u_{a_2}]\|_2\leq 2\varepsilon_k$, for all $x\in (C_k)_1$. In combination with \eqref{inner}, we get that $\|x-E_B(x)\|_2\leq 2c\varepsilon_k$, for all $x\in (C_k)_1$. By applying \cite[Lemma 2.2]{IS18} we derive the existence of a projection $r_k\in\mathcal Z(C_k'\cap M)$ such that $\tau(r_k)\geq 1-2c\varepsilon_k$ and \begin{equation}\label{cornerII}\text{$C_kr_k\prec_{M}^sB$, for every $k\in\mathbb N$.}\end{equation} 
Since $B$ and $(\bar{\otimes}_{l>N}B_l)\rtimes\Gamma$ are regular subalgebras of $M$ which form a commuting square, if  $p_k=(1-q_k)r_k\in C_k'\cap M$, by combining \eqref{cornerI}, \eqref{cornerII} and \cite[Lemma 2.8(2)]{DHI16} we get that
\begin{equation}
\label{cornerIII}\text{$C_kp_k\prec_M\bar{\otimes}_{l>N}B_l$, for every $k,N\in\mathbb N$.}
\end{equation}
Using \eqref{cornerIII} and reasoning as at the end of the proof of Lemma \ref{technical}, it follows that $C_kp_k$ is amenable, for every $k\in\mathbb N$. 
Since $\lim_{\omega}\tau(q_k)=0$ and $\lim_{\omega}\tau(r_k)=1$, we get that $\lim_{\omega}\tau(p_k)=1$. On the other hand, \eqref{C_k} implies that $\lim_{\omega}\|\rho_k(x)-E_{C_k}(\rho_k(x))\|_2=0$, for every $x\in N$. By applying Corollary \ref{amenable} we derive that $N$ is amenable, which is a contradiction.
\end{proof}

\end{document}